\theoremstyle{plain}
\numberwithin{equation}{section}
\numberwithin{figure}{section}
\numberwithin{table}{section}
\newtheorem{theorem}{Theorem}[section]
\newtheorem{lemma}[theorem]{Lemma}
\newtheorem{corollary}[theorem]{Corollary}
\newtheorem{proposition}[theorem]{Proposition}
\newtheorem{assumption}[theorem]{Assumption}
\newtheorem{problem}[theorem]{Problem}
\newtheorem{notation}[theorem]{Notation}
\theoremstyle{definition}
\newtheorem{definition}[theorem]{Definition}
\newtheorem{example}[theorem]{Example}
\theoremstyle{remark}
\newtheorem{remark}[theorem]{Remark}
\newcommand{\ii}{ \mathrm{i} }
\newcommand{\laplace}{\Delta}
\newcommand{\vect}[1]{ #1 }
\newcommand{\norm}[1]{\left\|#1\right\|}
\newcommand{\triplenorm}[1]{{\left\vert\kern-0.25ex\left\vert\kern-0.25ex\left\vert #1 
    \right\vert\kern-0.25ex\right\vert\kern-0.25ex\right\vert}}
\newcommand{\abs}[1]{\left|#1\right|}
\newcommand{\R}{\mathbb{R}}
\newcommand{\C}{\mathbb{C}}
\newcommand{\N}{\mathbb{N}}
\newcommand{\matH}{\prodOp{\mathbf{H}}}
\newcommand{\prodSpace}[1]{\underline{#1}}
\newcommand{\prodOp}[1]{\underline{#1}}
\newcommand{\colvec}[1]{\begin{pmatrix}#1\end{pmatrix}}
\newcommand{\ones}{\mathds{1}}
\newcommand{\fdU}[1]{\vect{U}_h^{#1}}
\newcommand{\fdu}[1]{u_{h}^{#1}}
\newcommand{\fdl}[1]{\lambda_h^{#1}}
\newcommand{\fdUstar}[1]{\tilde{\vect{U}}_*^{#1}}
\newcommand{\fdustar}[1]{\tilde{u}_{*}^{#1}}
\newcommand{\sdU}[1]{\vect{U}^{#1}}
\newcommand{\sdu}[1]{u^{#1}}
\newcommand{\sdl}[1]{\lambda^{#1}}
\newcommand{\sdUstar}[1]{\vect{U}_*^{#1}}
\newcommand{\sdustar}[1]{u_{*}^{#1}}
\newcommand{\sdX}[1]{\vect{X}^{#1}}
\newcommand{\fdX}[1]{\vect{X}^{#1}_h}
\newcommand{\sdx}[1]{x^{#1}}
\newcommand{\fdx}[1]{x^{#1}_h}
\newcommand{\innerprod}[3]{\left(#2,#3\right)_{#1}}
\newcommand{\ltwoprodgen}[3]{\innerprod{L^2\left(#1\right)}{#2}{#3}}
\newcommand{\ltwoprodgenp}[3]{\innerprod{\underline{L^2}\left(#1\right)}{#2}{#3}}
\newcommand{\ltwoprodint}[2]{\ltwoprodgen{\Omega}{#1}{#2}}
\newcommand{\ltwoprodintp}[2]{\ltwoprodgenp{\Omega}{#1}{#2}}
\newcommand{\ltwoprodfull}[2]{\ltwoprodgen{\R^d \setminus \Gamma}{#1}{#2}}
\newcommand{\ltwoprodfullp}[2]{\ltwoprodgenp{\R^d \setminus \Gamma}{#1}{#2}}
\newcommand{\ltwonorm}[1]{\norm{#1}_{L^2(\R^d)}}
\newcommand{\ltwonormfull}[1]{\norm{#1}_{L^2(\R^d \setminus \Gamma)}}
\newcommand{\ltwonormint}[1]{\norm{#1}_{L^2(\Omega)}}
\newcommand{\ltwonormgen}[2]{\norm{#2}_{L^2(#1)}}
\newcommand{\honenorm}[1]{\norm{#1}_{H^1(\R^d)}}
\newcommand{\honenormint}[1]{\norm{#1}_{H^1(\Omega)}}
\newcommand{\honenormintp}[1]{\norm{#1}_{\prodSpace{H^1}(\Omega)}}
\newcommand{\hpnormint}[2]{\norm{#2}_{H^{#1}(\Omega)}}
\newcommand{\honenormfull}[1]{\norm{#1}_{H^1(\R^d \setminus \Gamma)}}
\newcommand{\hpnorm}[2]{\norm{#2}_{H^{#1}(\R^d)}}
\newcommand{\hpbdrynorm}[2]{\norm{#2}_{H^{#1}(\Gamma)}}
\newcommand{\hpbdrynormp}[2]{\norm{#2}_{\prodSpace{H^{#1}}(\Gamma)}}
\newcommand{\hppwbdrynorm}[2]{\norm{#2}_{H^{#1}_{pw}\left(\Gamma\right)}}
\newcommand{\bdryprod}[2]{\left<#1,#2\right>_{\Gamma}}
\newcommand{\bdryprodp}[2]{\left<#1,#2\right>_{\underline{\Gamma}}}
\newcommand{\dualprod}[3]{\left<#2,#3\right>_{{#1}' \times {#1}}}
\newcommand{\traceJump}[1]{\left\llbracket\gamma #1 \right\rrbracket}
\newcommand{\normalJump}[1]{\left\llbracket \partial_n #1 \right\rrbracket }
\newcommand{\blfA}[4]{\mathcal{A}_{#1,#2}\left(#3,#4\right)}
\newcommand{\blfAInt}[2]{\blfA{\Omega}{\mathcal{V}}{#1}{#2}}
\newcommand{\blfAFull}[2]{\blfA{\R^d \setminus \Gamma}{\mathcal{V}_0}{#1}{#2}}
\newcommand{\blfHStab}[2]{\widetilde{\mathbf{H}}\left(#1,#2\right)}
\newcommand{\blfHStabP}[2]{\underline{\widetilde{\mathbf{H}}}\left(#1,#2\right)}
\newcommand{\blfFp}[4]{ \innerprod{\prodSpace{\pairltwo}}{\colvec{#1 \\ #2} }{ \colvec{#3 \\#4}}}
\newcommand{\blfB}[4]{B\left( \colvec{#1 \\ #2} , \colvec{#3 \\#4} \right) }
\newcommand{\hpgen}[2]{H^{#1}\left(#2\right)}
\newcommand{\ltwogen}[1]{L^{2}\left(#1\right)}
\newcommand{\hpbdry}[1]{\hpgen{#1}{\Gamma}}
\newcommand{\hppwbdry}[1]{H^{#1}_{pw}\left(\Gamma\right)}
\newcommand{\prodHpBdry}[1]{\prodSpace{H^{#1}}\left(\Gamma\right)}
\newcommand{\prodHpInt}[1]{\prodSpace{H^{#1}}\left(\Omega\right)}
\newcommand{\prodHpFull}[1]{\prodSpace{H^{#1}}\left(\R^d \setminus \Gamma \right)}
\newcommand{\hpint}[1]{\hpgen{#1}{\Omega}}
\newcommand{\hpfull}[1]{\hpgen{#1}{\R^d \setminus \Gamma}}
\newcommand{\pairltwo}{\mathcal{X}^0}
\newcommand{\pairhone}{\mathcal{X}^1}
\def\vecrhs{\mathbf{d}}
\newcommand{\eex}{\hbox{}\hfill\rule{0.8ex}{0.8ex}}
\newcommand{\eremk}{\eex}
\newcommand{\includeTikzOrEPS}[1]{\tikzexternalenable 
  \tikzsetnextfilename{#1}
  {\include{figures/#1}} \tikzexternaldisable}
\newcommand{\includeTikzOrEPS}[1]{\includegraphics{figures/#1}}
\title[FEM-BEM coupling for the Schrödinger equation]{Runge-Kutta convolution quadrature and FEM-BEM coupling for the time-dependent linear Schrödinger equation}
\author{Jens Markus Melenk}
\address{Technische Universit\"at Wien, Institut f\"ur Analysis und Scientific Computing, Wiedner Hauptstraße 8-10, A-1040 Vienna}
\email{melenk@tuwien.ac.at}
\author{Alexander Rieder}
\address{Technische Universit\"at Wien, Institut f\"ur Analysis und Scientific Computing, Wiedner Hauptstraße 8-10, A-1040 Vienna}
\email{alexander.rieder@tuwien.ac.at}
\date{\today}
\begin{document}
\maketitle
\begin{abstract}
  We propose a numerical scheme to solve the time-dependent linear Schrödinger equation. 
The discretization is carried out
  by combining a Runge-Kutta time stepping scheme with a finite element discretization in space.
  Since the Schrödinger equation is posed on the whole space $\R^d$ we combine the interior finite element 
  discretization with a convolution quadrature based boundary element discretization.
  In this paper we analyze the resulting fully discrete scheme in terms of stability and convergence rate.
  Numerical experiments confirm the theoretical findings.
\end{abstract}

\section{Introduction}
The Schrödinger equation is one of the main governing equations of quantum mechanics and as such has manifold applications in physics and engineering.
In its most common form, it is posed on the whole space of $\R^d$, making it difficult to discretize using 
standard finite element (FEM) or finite difference methods. Most numerical techniques rely on 
identifying a bounded computational domain on which a numerical methods such as the FEM is employed, and 
the unbounded exterior of the computational domains is accounted for by means of some (approximate) transparent boundary 
condition. A good recent survey is \cite{aabes_review}. A popular technique, which permits one to stay within
the FEM framework, is the  PML (perfectly matched layer) method, in which the computational domain is surrounded by 
a (thin) region that absorbs outgoing waves. Other techniques include the use of infinite elements or methods 
that approximate the exact or discrete boundary conditions. 
In the present paper, we also employ a FEM for the finite computational domain but account
for the unbounded complement by means of a boundary element method (BEM). 
Advantages of using a BEM based approach for the transparent boundary conditions include 
the great geometric flexibility, which allows one to choose non-convex computational domains, 
good stability (and energy conservation) properties, and the option to (cheaply) recover the exterior
solution by post processing. 

The method of the present article relies on a FEM-BEM coupling procedure. Two classical FEM-BEM coupling procedures 
are the \emph{symmetric coupling} introduced in 
\cite{costabel_symmetric_coupling} and \cite{han_a_new_fembem_coupling} and the 
\emph{Johnson-N\'ed\'elec coupling} \cite{jn_coupling}. In the present paper we will focus on 
the symmetric approach. 
%

Our treatment of the exterior domain also introduces non-local operators in time, specifically,
operator of convolution type in time.
Convolution quadrature (CQ) as a method to discretize convolution integrals or, more specifically, fractional derivatives
was introduced by Lubich in 1988 in the two papers \cite{lubich_cq_and_op_calc1,lubich_cq_and_op_calc2}. There, the CQ is based on multistep methods. 
Higher order convolution quadrature methods based on Runge-Kutta time stepping schemes were later introduced by Lubich and
Ostermann in \cite{lubich_ostermann_rk_cq}. Since then, the method has attracted significant interest as a technique to apply BEMs to time-dependent problems, not only for parabolic equations, for which
it was first conceived, but also for hyperbolic problems. 

A first numerical study of convolution quadrature for the Schrödinger equation, based on a coupling of finite elements and boundary elements was done by Schädle in \cite{schaedle_schroedinger2d},
where he observes optimal convergence rate in time when the 2D Schrödinger equation is discretized using convolution quadrature based on the trapezoidal rule in time
and a collocation BEM and both discretizations are combined using a one equation coupling (Johnson-Nédélec coupling).

The numerical analysis of hyperbolic convolution quadrature has mostly focused on the wave equation. Usually, the analysis is carried out in the Laplace domain as in 
\cite{rk_convq_wave,banjai_sauter_rapid_solution_wave_eqn,banjai_lubich_err_analysis_rkcq}. 
These works do not focus on a setting of FEM-BEM coupling; 
a milestone for studying CQ-based FEM-BEM couplings is the work by Laliena and Sayas \cite{laliena_sayas_cq_scatt}, 
which, however, focuses on the Laplace domain. 
The first full analysis of a FEM-BEM coupling arising 
from convolution quadrature for the wave 
equation is given in \cite{bls_cq_coupling}.

The analysis in the present paper is carried out directly in the time domain, making use of the theory of Runge-Kutta approximation of semigroups
as developed by Brenner, Thomée and Crouzeix \cite{brenner_thomee_rat_approx_semigroup,crouzeix_rk_approx_evolution}. This
allows for stability results that are of interest in their own right, and gives sharper growth conditions (in time) in the appearing constants of the
convergence results in comparison to the standard techniques that use the Laplace/Z-transform to carry out the analysis in the Laplace domain.
A similar observation has recently been made in \cite{banjai_sayas_fd_kirchoff_cq} when applying multistep method based convolution quadrature to the wave equation.

The present work differs from \cite{bls_cq_coupling} in the techniques employed. In particular, 
our tools allow us to analyze a large class of Runge-Kutta methods, 
whereas \cite{bls_cq_coupling} is specific to combining a leapfrog method in the interior with a multistep method 
for convolution quadrature.
In this connection, it is worth pointing out that we use the same Runge-Kutta method both in the interior and the exterior. This forces
us to use implicit schemes (since A-stability  is needed for convolution quadrature) also for the interior problem.

Another advantage of the point of view taken by us, in particular the avoidance of the Laplace domain, is that 
the analysis naturally covers methods that are not strongly $A$-stable, most notably the Gauss methods, 
which have better properties with respect to energy conservation and artificial dissipation.

By using the well established theory of semigroup approximation, we also benefit by avoiding the ``reduction of order'' phenomenon, which is present in 
all the Laplace domain based analyses of convolution quadrature. Instead we recover the full convergence order of the Runge-Kutta scheme employed (instead
of only the stage order or less), although this property also strongly depends on our restricting to the case of a homogeneous equation.


This paper is organized as follows:
In Section~\ref{sect:model_problem} we introduce the Schrödinger equation and the assumptions we need to make on the problem in order to apply our discretization scheme. We derive the semi-discrete problem after applying the 
Runge-Kutta method in time and reformulate it as a problem on a bounded domain with transparent boundary conditions. 
Section~\ref{sect:discretization} is concerned with the spatial discretization using a Galerkin scheme in some abstract subspaces. We then show existence and uniqueness
of the discrete problems and derive a different characterization of the scheme, which is better suited for analysis.
In Section~\ref{sect:abstract_analysis} we develop an abstract theory for problems of the form of Problem~\ref{problem:fully_discrete}, giving stability and approximation results.
Section~\ref{sect:convergence} is concerned with applying this theory to the Schrödinger equation to derive uniform stability and
a best approximation property of the fully discrete scheme with respect to the sequence of approximations which are 
semi-discrete in time.
In Section~\ref{sect:semi_discrete} we go back to the continuous in space/discrete in time setting to derive some stability, regularity, and approximation results. 
We do this by exploiting that the Runge-Kutta approximation can be viewed as a rational approximation of a semigroup.
Combining these results with the best approximation property and well-known results of finite element approximation in Section~\ref{sec:full_error_est}, we finally arrive at an
explicit convergence rate estimate for our approximation sequence.
Section~\ref{sec:numerics} is concerned with confirming the theoretical results of the previous sections in numerical experiments.
Appendix~\ref{appendix_bem_systems} deals with generalizing some results on boundary element methods from the scalar Helmholtz equation to systems of ``Helmholtz-like'' 
problems.

\subsection{Model problem and notation}
\label{sect:model_problem}
For a potential $\mathcal{V}: \R^d \to \R$, we define the Hamilton operator $\mathbf{H}: H^2(\R^d) \to L^2(\R^d)$ by 
\begin{align*}
  \mathbf{H} u:=-\laplace u + \mathcal{V}(\cdot) u.
\end{align*}
The Schrödinger equation reads: Given $\sdu{0} \in H^2 \left(\R^d\right)$, find $u \in C^1((0,\infty),H^2(\R^d)) \cap C^0([0,\infty),H^2(\R^d))$ such that
\begin{align}
  \label{eq:schroed_continuous}
  \ii u_t(t) &= \mathbf{H} u(t), \quad \quad \forall t > 0, \\
  u(0) &= \sdu{0}.
\end{align}

Let $\Omega \subseteq \R^d$ be the bounded Lipschitz domain of interest for the solution. We denote the exterior 
of $\Omega$ by
$\Omega^+:=\R^d \setminus \overline{\Omega}$ and its boundary by $\Gamma:= \partial \Omega$.
The internal trace operators will be denoted by $\gamma^-$ and $\partial_n^-$, while the traces on the exterior domain
will have the index $+$, where $\partial_n$ is the normal derivative with the normal pointing out of $\Omega$.
The jumps of a function $u$ over the boundary will be denoted as
\begin{align*}
  \traceJump{u}:=\gamma^- u - \gamma^+ u ,\quad
  \normalJump{u}:=\partial^-_n u - \partial^+_n u.
\end{align*}

In order to be able to apply our scheme, we need to make some assumptions on the problem.
\begin{assumption}
\label{assumption:problem}
  \begin{enumerate}[(i)]
  \item The potential $x \mapsto \mathcal{V}(x)$ is real valued and bounded.
  \item The potential is constant on  $\Omega^+$, i.e., $\mathcal{V}(x) \equiv \mathcal{V}_0$ $\forall x \in \Omega^+$.
  \item The initial condition vanishes outside of $\Omega$, i.e., $\operatorname{supp}\left(\sdu{0}\right) \subseteq \Omega$.
  \end{enumerate}
\end{assumption}
\begin{notation}
For a space $X$ we will denote the product space $X^m$ by $\prodSpace{X}$. For an operator $G: X \to X$ we will write
$\prodOp{G}: \prodSpace{X} \to \prodSpace{X}$ for the operator $\operatorname*{diag}(G,\dots,G)$. 
\end{notation}
We will also use the 
notation $\mathcal{B}(X,Y)$ to denote the set of all bounded linear operators from $X$ to $Y$.
 We write
$a \lesssim b$ if there exists a constant $C>0$ for which $a \leq C b$ holds; the constant $C$ may depend on $\Omega$, the Runge-Kutta method used,
the potential $\mathcal{V}$, but not on the principal quantities of interest, such as the time step size $k$, the exact solution $u$,
the approximations $u^n$, or the terminal time $T$. We will also write $a \sim b$ to mean $a \lesssim b \lesssim a$.

For any open set $\mathcal{O}$, we write $L^2(\mathcal{O})$ and $H^k(\mathcal{O})$ for the usual Lebesgue and Sobolev spaces. We will write $C_0^\infty(\mathcal{O})$ for
the set of smooth functions with compact support in $\mathcal{O}$. Given that BEM will feature prominently, 
we will also use the fractional order Sobolev spaces on the boundary $\Gamma$ of $\Omega$: 
$H^s(\Gamma)$ for $s>0$ and its dual $H^{-s}(\Gamma):= \left(H^{s}(\Gamma) \right)'$.
Occasionally we will need the adjoint operator of an operator $T$, this will be denoted by $T'$.

For a Banach space $V$, we write $V'$ for its dual space and $\dualprod{V}{\cdot}{\cdot}$ for the duality pairing. 
 The inner product of a Hilbert space $H$ is denoted $\innerprod{H}{\cdot}{\cdot}$. On the boundary $\Gamma$, we 
write $\bdryprod{\cdot}{\cdot}$ for the extension of the standard $L^2(\Gamma)$ inner product 
to $H^{-1/2}(\Gamma) \times H^{1/2}(\Gamma)$.
In order to simplify the notation, we will sometimes encounter matrix products with elements of an abstract Banach space. For $A \in \R^{n \times m}$ and 
$v \in \prodSpace{X}$, we write $A v \in \prodSpace{X}$, for $(A v)_i:=\sum_{j=0}^{m}{A_{ij} v_j}$, $i=1,\dots,n$.

In this paper we consider discretizations based on Runge-Kutta methods; we refer to \cite{hairer_wanner_2} 
for details on Runge-Kutta methods. 

\begin{definition}
  A Runge-Kutta method with $m$ stages is given by a matrix $A \in \R^{m \times m}$ and vectors $b \in \R^m$ and $c \in \R^m$.
  Given a step size $k>0$, and applied to the problem \eqref{eq:schroed_continuous}, the (time) discretization is given by
  \begin{subequations}
      \label{rk_approx_whole_space_1}
    \begin{align}
      \left( I + \ii k A \matH\right) \sdU{n} &= \sdu{n} \ones,\\
      \sdu{n+1}&=(1-\vect{b}^T A^{-1} \ones)\sdu{n} + \vect{b}^T A^{-1} \sdU{n},
    \end{align}
  \end{subequations}
  where $\sdU{n}$ is an $m$-dimensional vector, called \emph{stage vector}, and $\sdu{n}$ represents the
  approximation of $u(n k)$. Here $\ones$ denotes the constant-ones-vector $\ones=(1,\dots,1)^T \in \R^m$.
\end{definition}

We need to make some further assumptions on the Runge-Kutta method used, namely:
\begin{assumption}
\label{assumption:rk_method}
\begin{enumerate}
\item The Runge-Kutta method is \emph{A-stable},
  i.e., for all $z \in \C$ with $\Re(z) \leq 0$ the matrix $I - zA$ is regular, and
  the stability function
  \begin{align}
    \label{def_rk_stability_fkt}
    R(z):=1 + z \vect{b}^T(\vect{I} - z A)^{-1}\mathds{1}
  \end{align}
  satisfies $\abs{R(z)} \leq 1 $.
  \item The matrix $A$ is invertible.
\end{enumerate}
\end{assumption}

\begin{remark}
  Examples of A-stable Runge-Kutta methods with invertible matrix $A$ include the well-known families
  of Radau IIA and Gauss methods (see \cite{hairer_wanner_2} for their definitions). 
  Thus, methods of arbitrary order and some symplectic methods (the Gauss methods) are included. 
  It is common in the literature on convolution quadrature to make further assumptions on the 
  stability function $R$ such as $\abs{R(\ii t)}<1$ for $t \in \R \setminus \{0\}$, which excludes the Gauss methods; our analysis 
  naturally includes these methods without further difficulty.
\eremk
\end{remark}

We will often use an alternative representation of $R(z)$ (the simple proof of the equivalence can, for example, be found in \cite{rk_convq_wave}):
\begin{align}
  \label{def_rk_stability_fkt2}
  R(z)&=(1-\vect{b}^T A^{-1}\mathds{1}) + \vect{b}^T A^{-1} \left(I - zA \right)^{-1} \mathds{1}.
\end{align}

For the remainder of the paper we will use the definition $R(\infty):=1-\vect{b}^T A^{-1} \ones$, 
multiply equation \eqref{rk_approx_whole_space_1} by $-\ii A^{-1}$
and set $\vecrhs:=-\ii A^{-1} \ones$ to simplify the notation. This gives us the equivalent system:
\begin{subequations}
  \label{rk_approx_whole_space_2}
  \begin{align}
    \left( -\ii A^{-1} + k \matH\right) \sdU{n} &= \sdu{n} \vecrhs, \\
    \sdu{n+1}&=R(\infty)\sdu{n} + \vect{b}^T A^{-1} \sdU{n}.
  \end{align}
\end{subequations}

The properties of the system \eqref{rk_approx_whole_space_2} strongly depend on the spectrum of $A$. This is the content of the following lemma.
\begin{lemma}
\label{rk_spectrum_a}
If the matrix $A$ of an A-stable Runge-Kutta method is invertible, then its spectrum satisfies
\begin{align*}
  \sigma(A) \subseteq \{ \lambda \in \C: \; \Re(\lambda) > 0\}.
\end{align*}
\begin{proof}
  By assumption we have $0 \notin \sigma(A)$.
  For $\lambda \neq 0$ with $\Re(\lambda) \leq 0$ we calculate:
  \begin{align*}
    A-\lambda \mathbf{I}&= -\lambda \left(\mathbf{I} + \frac{1}{\lambda} A  \right).
  \end{align*}
  It holds
  \begin{align*}
    \Re\left(\frac{1}{\lambda}\right)=\Re\left(\frac{\overline{\lambda}}{\abs{\lambda}^2}\right) \leq 0.
  \end{align*}
  Since the method is A-stable, the matrix  $\left( \mathbf{I} + \frac{1}{\lambda} A  \right)$ is invertible (cf. (\ref{def_rk_stability_fkt2})) thus 
and $\lambda \notin \sigma(A)$.
\end{proof}
\end{lemma}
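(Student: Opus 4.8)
The plan is to show directly that no eigenvalue of $A$ lies in the closed left half-plane. Invertibility of $A$ immediately removes the origin, so $0 \notin \sigma(A)$, and it remains to exclude every $\lambda \in \C$ with $\lambda \neq 0$ and $\Re(\lambda) \leq 0$. The whole point is to transfer the \emph{regularity} half of A-stability (Assumption~\ref{assumption:rk_method}(1)), which asserts that $\vect{I} - zA$ is invertible for all $z$ with $\Re(z) \leq 0$, into a statement about $\sigma(A)$.

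The bridge is the reciprocal substitution $z := 1/\lambda$. Since for $\lambda \neq 0$ one has the factorization
\begin{align*}
  A - \lambda \vect{I} = -\lambda\left(\vect{I} - \tfrac{1}{\lambda} A\right),
\end{align*}
the matrix $A - \lambda \vect{I}$ is singular if and only if $\vect{I} - zA$ is singular with $z = 1/\lambda$. So the key computation I would carry out is to check that $z = 1/\lambda$ again lies in the closed left half-plane whenever $\lambda$ does: writing $1/\lambda = \overline{\lambda}/\abs{\lambda}^2$ and using $\Re(\overline{\lambda}) = \Re(\lambda)$ gives $\Re(1/\lambda) = \Re(\lambda)/\abs{\lambda}^2 \leq 0$. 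Thus the reciprocal map sends the punctured closed left half-plane into itself, which is exactly the compatibility needed to invoke A-stability.

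With this in hand the conclusion is immediate: for such $\lambda$, A-stability guarantees $\vect{I} - \tfrac{1}{\lambda}A$ is invertible, hence $A - \lambda \vect{I}$ is invertible, hence $\lambda \notin \sigma(A)$. I do not expect a genuine obstacle here, since the argument is short; the only point requiring care is the sign bookkeeping in the reciprocal computation (verifying that $\Re(z) \leq 0$ is preserved), together with being clear that it suffices to use only the invertibility clause of A-stability rather than the bound $\abs{R(z)} \leq 1$ itself, the latter entering only implicitly through the representation \eqref{def_rk_stability_fkt2} of the stability function.
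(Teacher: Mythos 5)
Your proposal is correct and follows essentially the same route as the paper: factor $A-\lambda \mathbf{I}$ to reduce the question to invertibility of $\mathbf{I}-\tfrac{1}{\lambda}A$, verify $\Re(1/\lambda)\leq 0$, and invoke the regularity clause of A-stability. In fact your factorization $A-\lambda \mathbf{I}=-\lambda\left(\mathbf{I}-\tfrac{1}{\lambda}A\right)$ is the sign-correct version of the one printed in the paper (which writes $\mathbf{I}+\tfrac{1}{\lambda}A$, an evident typo), so no further comment is needed.
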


The tool we use to derive transparent boundary conditions will be the \emph{Z-transform} or \emph{generating function}. We
formulate this transformation in a general lemma:
\begin{lemma}
\label{z_transform_lemma}
  Let $X$ be a Hilbert space. Let $T$ be a closed, not necessarily bounded, operator on $X$.
  Let two sequences $\left( y_n \right)_{n \in \N}  \subseteq X$ and $\left( \vect{Y}^n \right)_{n\in \N} \subseteq \prodSpace{X}$ be given that satisfy
  \begin{align}
    y_0 &= 0,\\
    \left( -\ii  A^{-1} + k \prodOp{T} \right) \vect{Y}^n &= y_n \vecrhs, \label{z_trans_lemma_stages_eq}\\
    y_{n+1}&=R(\infty) y_n + \vect{b}^T A^{-1} \vect{Y}^n. \label{z_trans_lemma_step_eq}
  \end{align}

  We define the $Z$-transform of the sequence $(Y^n)_{n \in \N}$ as the formal power series
  \begin{align*}
    \hat{Y}:=\sum_{n=0}^{\infty}{Y^n z^n}.
  \end{align*}

  If we assume that the Z-transform of $\left( \vect{Y}^n \right)_{n \in \N}$ exists for 
  sufficiently small $z$ as a power series in $\prodSpace{X}$, for example, if we have $\norm{\vect{Y}^n} \leq C e^{\omega n}$ 
  for some  constants $C$ and $\omega$,
  then 
  the Z-transform of $(Y^n)_{n \in \N}$ solves 
  \begin{align}
    \label{z_transform_lemma_result}
    - \frac{\ii \delta(z)}{k} \hat{Y} + \prodOp{T} \,\hat{Y} &= 0,
  \end{align}

  where the matrix-valued function $z\mapsto \delta(z)$ is defined as
  \begin{align}
    \label{def:delta}
    \delta(z):=\left(A + \frac{z}{1-z} \ones \vect{b}^T \right)^{-1}.
  \end{align}

  \begin{proof}
    First we note a characterization of $\delta(z)$ that is a simple consequence of the Sherman-Morrison formula:
    For $\abs{z} < 1$ we have
    \begin{align*}
      \delta(z) = A^{-1} - \frac{z A^{-1} \ones \vect{b}^T A^{-1}}{1-z R(\infty)}.
    \end{align*}

    We consider the Z-transform of $\left(y_n\right)_{n\in \N}$. Starting from (\ref{z_trans_lemma_step_eq}) we multiply with $z^n$. Summing over all $n \in \N$ then gives
    \begin{align}
      \label{eq:_zy_in_terms_of_stages}
      z^{-1} \left(\hat{y} - y_0 \right) &=R(\infty) \hat{y} + \vect{b}^T A^{-1} \hat{\vect{Y}},
    \end{align}
    or, since we assumed that $y_0 = 0$:  
    \begin{align*}
      \hat{y} &= \left(z^{-1} - R(\infty)\right)^{-1} \vect{b}^T A^{-1} \hat{\vect{Y}}.
    \end{align*}
    
    The Z-transform of $\left( \vect{Y}^n \right)_{n\in \N}$ is more involved, since it involves an unbounded operator. 
    We start from (\ref{z_trans_lemma_stages_eq}) and multiply again with $z^n$ and sum up to a fixed $N \in \N$ to get
    \begin{align*}
      \prodOp{T} \sum_{n=0}^{N}{ \vect{Y}^n z^n} &=  \sum_{j=0}^{N}{ \left( \ii k^{-1} A^{-1} \vect{Y}^n + y_n k^{-1} \vecrhs \right) \,z^n}.
    \end{align*}

    If we assume that the Z-transforms exists, we have that both $a_N:=\sum_{j=0}^{N}{ \vect{Y}^n z^n}$ 
    and $b_N:= \prodOp{T} a_N= \sum_{n=0}^{N}{ \left( \ii k^{-1} A^{-1} \vect{Y}^n + y_n k^{-1}\vecrhs \right) \,z^n}$ converge for $N \to \infty$. 
    Since $T$ is closed we have $\prodOp{T} \lim_{N \to \infty} a_N = \lim_{N \to \infty} b_N$, or
    \begin{align*}
      \prodOp{T}\left( \sum_{n=0}^{\infty}{ \vect{Y}^n z^n} \right) &=  \sum_{n=0}^{\infty}{ \left( \ii k^{-1} A^{-1} \vect{Y}^n + y_n k^{-1} \vecrhs \right) \,z^n}.
    \end{align*}
    This is an equation for the Z-transforms:
    \begin{align*}
      \prodOp{T} \hat{\vect{Y}} &= \ii k^{-1} A^{-1} \hat{\vect{Y}} +  k^{-1} \vecrhs \hat{y}.
    \end{align*}
    Inserting the expressions for $\hat{y}$ and $\vecrhs$ gives
    \begin{align*}
      \prodOp{T} \hat{\vect{Y}} &= \ii k^{-1} \left( A^{-1} - A^{-1} \frac{1}{z^{-1} - R(\infty)} \ones \vect{b}^T A^{-1} \right) \hat{\vect{Y}},
    \end{align*}    
    and a simple calculation then concludes the proof.
  \end{proof}
\end{lemma}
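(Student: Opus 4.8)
The plan is to exploit the two defining recurrences by passing to the $Z$-transform, which converts the recursion in $n$ into an algebraic relation in $z$ among the transforms $\hat{y}$ and $\hat{\vect{Y}}$. The only genuinely delicate point is that $\prodOp{T}$ is unbounded, so interchanging it with the infinite summation over $n$ must be justified rather than taken for granted; everything else is bookkeeping followed by one algebraic identification.

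First I would take the $Z$-transform of the scalar step equation \eqref{z_trans_lemma_step_eq}. Multiplying by $z^n$ and summing over $n$, the left-hand side $\sum_n y_{n+1} z^n$ becomes $z^{-1}(\hat{y} - y_0) = z^{-1}\hat{y}$ because $y_0 = 0$, while the right-hand side is $R(\infty)\hat{y} + \vect{b}^T A^{-1}\hat{\vect{Y}}$. Solving for $\hat{y}$ expresses the scalar transform purely through the stage transform, namely $\hat{y} = (z^{-1} - R(\infty))^{-1}\vect{b}^T A^{-1}\hat{\vect{Y}}$; this requires only that $\abs{z}$ be small enough that $z^{-1} \neq R(\infty)$.

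Next I would treat the stage equation \eqref{z_trans_lemma_stages_eq}. Rather than naively summing $\prodOp{T}\vect{Y}^n z^n$, I would work with the partial sums $a_N := \sum_{n=0}^N \vect{Y}^n z^n$ and $b_N := \prodOp{T} a_N$. Rearranging \eqref{z_trans_lemma_stages_eq} gives $\prodOp{T}\vect{Y}^n = \ii k^{-1}A^{-1}\vect{Y}^n + k^{-1} y_n\vecrhs$, so $b_N$ is itself a partial sum whose convergence follows from that of $\hat{\vect{Y}}$ and $\hat{y}$. Since $a_N \to \hat{\vect{Y}}$ and $b_N$ both converge and $T$ (hence $\prodOp{T}$) is closed, I may pass to the limit to obtain $\prodOp{T}\hat{\vect{Y}} = \lim_N b_N = \ii k^{-1}A^{-1}\hat{\vect{Y}} + k^{-1}\vecrhs\,\hat{y}$. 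This closedness step is the main obstacle, and the assumed growth bound $\norm{\vect{Y}^n}\le C e^{\omega n}$ is precisely what secures the required convergence for small $z$.

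Finally I would substitute the expression for $\hat{y}$ together with $\vecrhs = -\ii A^{-1}\ones$ into this identity, obtaining $\prodOp{T}\hat{\vect{Y}} = \ii k^{-1}\bigl(A^{-1} - (z^{-1}-R(\infty))^{-1}A^{-1}\ones\vect{b}^T A^{-1}\bigr)\hat{\vect{Y}}$. It then only remains to recognise the bracketed matrix as $\delta(z)$: using $(z^{-1}-R(\infty))^{-1} = z/(1 - zR(\infty))$ and applying the Sherman--Morrison formula to $\delta(z) = (A + \tfrac{z}{1-z}\ones\vect{b}^T)^{-1}$, together with the relation $\vect{b}^T A^{-1}\ones = 1 - R(\infty)$, is a purely algebraic verification. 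Rewriting the result as $-\tfrac{\ii}{k}\delta(z)\hat{\vect{Y}} + \prodOp{T}\hat{\vect{Y}} = 0$ gives \eqref{z_transform_lemma_result}.
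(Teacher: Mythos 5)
Your proposal is correct and follows essentially the same route as the paper's own proof: Z-transform of the step equation to express $\hat{y}$ via $\hat{\vect{Y}}$, a partial-sum argument with the closedness of $T$ to justify $\prodOp{T}\hat{\vect{Y}} = \ii k^{-1}A^{-1}\hat{\vect{Y}} + k^{-1}\vecrhs\,\hat{y}$, and the Sherman--Morrison identity to recognise the resulting matrix as $\delta(z)$. You even make explicit the final algebraic step (using $\vect{b}^T A^{-1}\ones = 1 - R(\infty)$ and $(z^{-1}-R(\infty))^{-1} = z/(1-zR(\infty))$) that the paper leaves as ``a simple calculation.''
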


The matrix-valued function $z\mapsto \delta(z)$ defined in \eqref{def:delta} plays an important role in the method. 
The following proposition, taken from \cite{rk_convq_wave}, estimates its spectrum:
\begin{proposition}[{\cite[Lemma~{2.6}]{rk_convq_wave}}]
\label{prop:spectrum_of_delta}
  For an RK-method with invertible $A$ and for $\abs{z} < 1$, the spectrum of $\delta(z)$ satisfies
  \begin{align*}
    \sigma(\delta(z)) \subseteq \sigma(A^{-1}) \cup \{ w \in \C: R(w) z = 1 \}.
  \end{align*}
  Hence, if the Runge-Kutta method is A-stable, then $\sigma(\delta(z))$ lies in the open right half-plane $\C_+:=\{z \in \C: \Re(z) > 0\}$.
\end{proposition}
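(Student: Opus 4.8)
The plan is to reduce everything to the spectrum of the inverse matrix $M := \delta(z)^{-1} = A + \frac{z}{1-z}\ones \vect{b}^T$, which is a rank-one perturbation of $A$. Since $\delta(z)$ is invertible we have $0 \notin \sigma(\delta(z))$, and $w \in \sigma(\delta(z))$ if and only if $\mu := 1/w \in \sigma(M)$. It therefore suffices to show that every $\mu \in \sigma(M)$ either lies in $\sigma(A)$ (so that $w = 1/\mu \in \sigma(A^{-1})$) or satisfies $R(1/\mu)\,z = 1$ (so that $R(w)\,z = 1$); passing back to reciprocals then gives the claimed inclusion.

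First I would treat the case $\mu \notin \sigma(A)$, so that $A - \mu I$ is invertible. Writing $M - \mu I = (A - \mu I) + \frac{z}{1-z}\ones \vect{b}^T$ as a rank-one perturbation of $A - \mu I$, the matrix determinant lemma shows that $M - \mu I$ is singular precisely when $1 + \frac{z}{1-z}\vect{b}^T (A - \mu I)^{-1} \ones = 0$. The key step is to rewrite the scalar $\vect{b}^T (A - \mu I)^{-1} \ones$ in terms of the stability function: using $A - \mu I = -\mu\,(I - \frac{1}{\mu}A)$ together with the definition \eqref{def_rk_stability_fkt} of $R$, one finds $\vect{b}^T (A - \mu I)^{-1} \ones = 1 - R(1/\mu)$. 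Substituting this and using $1 + \frac{z}{1-z} = \frac{1}{1-z}$ collapses the singularity condition to $z\,R(1/\mu) = 1$, i.e. $R(w)\,z = 1$. The complementary case $\mu \in \sigma(A)$ gives $w = 1/\mu \in \sigma(A^{-1})$ outright, completing the inclusion.

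For the ``hence'' part I would bound each of the two sets separately under A-stability. For $\sigma(A^{-1})$, Lemma~\ref{rk_spectrum_a} gives $\sigma(A) \subseteq \C_+$, and since $\Re(1/\lambda) = \Re(\lambda)/\abs{\lambda}^2$, reciprocals of right-half-plane eigenvalues again lie in $\C_+$. For the second set, suppose $R(w)\,z = 1$ with $\abs{z} < 1$; then $\abs{R(w)} = 1/\abs{z} > 1$. But A-stability forces $\abs{R(w)} \leq 1$ whenever $\Re(w) \leq 0$, where $R(w)$ is moreover well-defined because $I - wA$ is regular there. Hence $\Re(w) > 0$, and both sets lie in $\C_+$.

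I expect the only genuinely nontrivial step to be the algebraic identity $\vect{b}^T (A - \mu I)^{-1} \ones = 1 - R(1/\mu)$, which connects the resolvent of $A$ to the stability function; the remaining ingredients are the matrix determinant lemma and elementary estimates on $\abs{R}$.
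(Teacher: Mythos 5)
Your proof is correct. Note that the paper does not prove this proposition at all --- it is imported verbatim from \cite{rk_convq_wave}, Lemma~2.6 --- so there is no internal argument to compare against; your write-up supplies a valid self-contained proof, and it is essentially the standard one for this result. The reduction to $M:=\delta(z)^{-1}=A+\frac{z}{1-z}\ones \vect{b}^T$, the matrix determinant lemma, and the resolvent identity $\vect{b}^T(A-\mu I)^{-1}\ones = 1-R(1/\mu)$ (which indeed follows from $(A-\mu I)^{-1}=-\mu^{-1}\left(I-\mu^{-1}A\right)^{-1}$ and \eqref{def_rk_stability_fkt}) are all sound, and the collapse of the singularity condition checks out: $1+\frac{z}{1-z}\bigl(1-R(1/\mu)\bigr)=\frac{1-zR(1/\mu)}{1-z}$, so vanishing is equivalent to $zR(1/\mu)=1$. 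Two points you use implicitly deserve a word: $0\notin\sigma(M)$ (so the passage $\mu\mapsto 1/\mu$ is legitimate) holds because $\delta(z)$ is assumed to exist, and your estimate $\abs{R(w)}=1/\abs{z}>1$ presumes $z\neq 0$ --- but for $z=0$ the set $\{w:\, R(w)z=1\}$ is empty and $\delta(0)=A^{-1}$, so the conclusion is immediate there. The ``hence'' part correctly combines Lemma~\ref{rk_spectrum_a} with the facts that $\Re(1/\lambda)=\Re(\lambda)/\abs{\lambda}^2$ preserves the open right half-plane and that A-stability guarantees both the regularity of $I-wA$ and $\abs{R(w)}\leq 1$ whenever $\Re(w)\leq 0$.
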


We apply Lemma~\ref{z_transform_lemma} to our Runge-Kutta approximations, restricted to the exterior domain $\Omega^+$, with $X=L^2( \Omega^+ )$ and $T:=\mathbf{H}$.
Because the sequence of approximations is norm preserving (see Lemma~\ref{sd_stages_apriori}), we get
for $\abs{z}<1$ that the Z-transform exists as an $L^2(\Omega^+)$ power series and therefore solves the differential equation:
\begin{align}
\label{eq:referee-wanted-this}
  -\prodOp{\laplace} \hat{\vect{U}} - \left( \frac{\ii \delta(z)}{k} - \mathcal{V}_0\right) \hat{\vect{U}} &= 0 \quad \quad \text{in } \Omega^+.
\end{align}

The partial differential equation above is structurally similar to a Helmholtz problem with complex wave number (the difference that, for $m>1$, it is matrix-valued,
is addressed in Appendix~\ref{appendix_bem_systems}). 
This allows us to use boundary element methods for the discretization.
We recall some important definitions below (see the 
books \cite{book_sauter_schwab,book_steinbach,book_hsiao_wendland,book_mclean} 
for details on the BEM and integral equations).

\begin{definition}
  For $\Re(s) > 0$, the fundamental solution of the operator $\laplace - s^2$ is given by
  \begin{align}
    \Phi(x,y;s):= 
    \begin{cases}
      \frac{\ii}{4} H^{(1)}_{0} \left(\ii s \abs{x-y}\right) & \text{for $d=2$} \\
      \frac{e^{-s\abs{x-y}}}{4 \pi \abs{x-y}} & \text{for $d=3$}, 
      \end{cases}
  \end{align}
  where $H^{(1)}_0$ is the Hankel function of the first kind and order zero. Next, we define the 
  Newton, single and double layer potentials:
  For $f \in C^{\infty}_0(\R^d \setminus \Gamma)$, $\lambda \in \hpbdry{-1/2}$, and $\phi \in \hpbdry{1/2}$ we set
  \begin{subequations}
  \begin{align}
      \label{def_bem_potentials}
      \left(N(s) f\right)(x)&:=\int_{ \R^d \setminus \Gamma }{\Phi(x,y;s) f(y) \; dy }, &\forall &x \in \R^d \setminus \Gamma, \\
      \left(S(s) \lambda\right)(x)&:=\int_{\Gamma}{\Phi(x,y;s) \lambda(y) \; d\Gamma(y)}, &\forall  &x \in \R^d \setminus \Gamma, \\
      \left(D(s) \phi\right)(x)&:=\int_{\Gamma}{\partial_{n(y)} \Phi(x,y;s) \phi(y) \; d\Gamma(y)}, &\forall &x \in \R^d \setminus \Gamma.
    \end{align}
  \end{subequations}

  We will also need the following operators on the boundary, formally given by:
  \begin{subequations}
    \label{def_bem_operators}
  \begin{align}
    \label{def_bem_operators-a}
    V(s) \lambda&:= \int_{\Gamma}{\Phi( \cdot,y,s ) \lambda(y) \, d\Gamma(y)}, \\
    \label{def_bem_operators-b}
    K^T(s) \lambda&:=\int_{\Gamma}{\partial_{n( \cdot)} \Phi( \cdot,y,s ) \lambda(y) \, d\Gamma(y)}, \\
    \label{def_bem_operators-c}
    K(s) \phi&:=\int_{\Gamma}{\partial_{n(y)} \Phi( \cdot,y,s ) \phi(y) \, d\Gamma(y)}, \\
    \label{def_bem_operators-d}
    W(s) \phi&:= - \partial_{n} \int_{\Gamma}{\partial_{n(y)} \Phi( \cdot,y,s ) \phi(y) \, d\Gamma(y)}. 
  \end{align}
  \end{subequations}

  We have the following connections between the potentials and the operators:
  \begin{align}
    \label{bem_operator_properties}
    \gamma^{\pm} S = V, \quad \partial^{\pm}_n S  = \mp \frac{1}{2} I + K^T, \quad \gamma^{\pm} D = \pm \frac{1}{2} I + K, \quad \partial_n^{\pm} D= -W.
  \end{align}  
\end{definition}

We will often replace the wave number $s$ with a matrix. This is understood in the following sense:
\begin{definition}
  \label{def:riesz_dunford}
  Let $F: G \to \mathcal{B}(X,Y)$ be a holomorphic function that is defined on a domain $G \subseteq \C$ and maps into the space of bounded linear operators 
  between the Banach spaces $X$ and $Y$.
  Let $B$ be a matrix with $\sigma(B) \subseteq G$.
  We then define  $F(B)$ via the Riesz-Dunford functional calculus for holomorphic functions:
  \begin{align*}
    F(B):=\frac{1}{2 \pi \ii}\int_{\mathcal{C}}{ \left(B - \lambda\right)^{-1} \otimes F(\lambda)  d\lambda },
  \end{align*} 
  where $\mathcal{C}\subset G$ is a closed path with winding number $1$ encircling $\sigma(B)$. The operator 
  $\otimes$ denotes the Kronecker product, i.e., for a matrix $A$
  \begin{align*}
    A\otimes F:=
    \begin{pmatrix}
      a_{11} F& \dots & a_{1m} F \\
      \vdots& \dots & \vdots \\
      a_{m1} F& \dots & a_{mm} F 
    \end{pmatrix},
  \end{align*}
  defines an operator mapping from the product space $\prodSpace{X}$ to the product space $\prodSpace{Y}$.
\end{definition}


\begin{proposition}[Calderón system]
\label{proposition:calderon}
For $B \in \C^{m \times m}$, let $X \in \prodSpace{H^1}(\R^d \setminus \Gamma)$ solve the equation $-\prodOp{\laplace} X + B^2 X = 0$ in $\R^d \setminus \Gamma$.
Then, the following identities hold on the boundary:
\begin{align*}
  \colvec{ \gamma^- X \\ \partial^+_n X }&= 
  \begin{pmatrix}
    \frac{1}{2} - K(B)  & V(B) \\
    W(B) & -\frac{1}{2} + K^T(B)
  \end{pmatrix}
  \colvec{ \traceJump{X} \\ \normalJump{X} }.
\end{align*}
Here  $K(B)$ is defined using the scalar operator $K$ from 
    (\ref{def_bem_operators-c}), and the concatenation $K(B)$ is taken in the sense of 
    Def.~\ref{def:riesz_dunford}. The operator $\pm 1/2$ is a shorthand for $\pm1/2$ times the identity operator in the appropriate product space.
 \end{proposition}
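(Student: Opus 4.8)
The plan is to obtain the $2\times 2$ block system as the boundary trace of a single matrix-valued Green's representation formula, thereby reducing everything to the scalar jump relations \eqref{bem_operator_properties} lifted through the Riesz--Dunford calculus of Definition~\ref{def:riesz_dunford}. First I note that for the layer operators $V(B), K(B), K^T(B), W(B)$ to be defined at all we tacitly need $\sigma(B) \subseteq \C_+$, since the scalar kernels $\Phi(\cdot,\cdot;s)$ and hence the operator-valued maps $s \mapsto V(s), K(s), \dots$ are holomorphic only on the right half-plane; this is exactly the situation guaranteed in the intended application by Proposition~\ref{prop:spectrum_of_delta}.

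The first and principal step is to establish the matrix representation formula
\begin{align*}
  X = S(B)\normalJump{X} - D(B)\traceJump{X} \qquad \text{in } \R^d \setminus \Gamma .
\end{align*}
For a scalar wave number $s$ with $\Re(s)>0$ this is the classical representation formula for solutions of $-\laplace u + s^2 u = 0$: membership of $u$ in $H^1(\R^d\setminus\Gamma)$ together with the exponential decay of $\Phi(\cdot,\cdot;s)$ makes the boundary term at infinity vanish, and Green's second identity on $\Omega$ and on $\Omega^+$ separately fuses the two boundary contributions into the jumps $\traceJump{u}$ and $\normalJump{u}$. To pass to a matrix $B$ I would argue through the functional calculus: the scalar identity holds for every $\lambda$ on a contour $\mathcal{C}$ encircling $\sigma(B)$ and depends holomorphically on $\lambda$, so integrating $(B-\lambda)^{-1}\otimes(\cdot)$ against it reproduces the matrix identity. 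Equivalently, one checks that $\Phi(\cdot,\cdot;B)$ is a fundamental solution of $\laplace - B^2$ and runs the Green's identity argument directly; the system-level mapping properties that legitimize these manipulations are precisely what Appendix~\ref{appendix_bem_systems} is designed to supply.

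Given the representation formula, the remaining steps are routine. I would apply the interior Dirichlet trace $\gamma^-$ and the exterior conormal derivative $\partial_n^+$ and insert the matrix versions of \eqref{bem_operator_properties}, namely $\gamma^\pm S(B) = V(B)$, $\gamma^\pm D(B) = \pm\tfrac{1}{2} + K(B)$, $\partial_n^\pm S(B) = \mp\tfrac{1}{2} + K^T(B)$, and $\partial_n^\pm D(B) = -W(B)$, each of which again lifts from the scalar case because it holds holomorphically in $s$. This yields
\begin{align*}
  \gamma^- X &= V(B)\normalJump{X} - \left(-\tfrac{1}{2} + K(B)\right)\traceJump{X} = \left(\tfrac{1}{2} - K(B)\right)\traceJump{X} + V(B)\normalJump{X}, \\
  \partial_n^+ X &= \left(-\tfrac{1}{2} + K^T(B)\right)\normalJump{X} + W(B)\traceJump{X},
\end{align*}
which is exactly the asserted block system. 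As a consistency check, forming the analogous expressions for $\gamma^+ X$ and $\partial_n^- X$ and subtracting gives $\gamma^- X - \gamma^+ X = \traceJump{X}$ and $\partial_n^- X - \partial_n^+ X = \normalJump{X}$, confirming that the signs are correctly matched to the jump conventions.

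The genuine difficulty is concentrated in the first step: justifying that the scalar representation formula and the scalar jump relations survive the passage to a possibly non-diagonalizable $B$. A diagonalization argument disposes of diagonalizable $B$ directly but does not reach Jordan blocks, so the clean route is the holomorphic functional calculus, where one must verify that the trace operators $\gamma^\pm, \partial_n^\pm$ commute with the contour integral defining $S(B)$ and $D(B)$ and that the underlying decay and $\mathcal{B}(\cdot,\cdot)$-mapping bounds are uniform along $\mathcal{C}$. This is the content deferred to Appendix~\ref{appendix_bem_systems}; once those system-level statements are in hand, the proposition follows from the elementary trace computation above.
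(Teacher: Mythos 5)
Your overall architecture coincides with the paper's: reduce the proposition to a matrix-valued representation formula $X = S(B)\normalJump{X} - D(B)\traceJump{X}$, obtained by lifting scalar results through the Riesz--Dunford calculus (this is exactly what Appendix~\ref{appendix_bem_systems} supplies), and then read off the block system by applying $\gamma^-$ and $\partial_n^+$ together with the jump relations \eqref{bem_operator_properties}. Your trace computation and sign bookkeeping are correct, as is your observation that one tacitly needs $\sigma(B)\subseteq\C_+$ for the operators $V(B),K(B),K^T(B),W(B)$ to be defined.

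However, the central lifting step is described in a way that would fail if executed literally, and the one genuine idea of the paper's argument is missing. You assert that ``the scalar identity holds for every $\lambda$ on a contour $\mathcal{C}$ and depends holomorphically on $\lambda$, so integrating $(B-\lambda)^{-1}\otimes(\cdot)$ against it reproduces the matrix identity.'' But there is no function to which the scalar representation formula at wave number $\lambda$ applies: $X$ and its components solve the matrix equation $-\prodOp{\laplace}X + B^2X = 0$, not the scalar Helmholtz equation with wave number $\lambda$. The paper's proof instead applies the scalar formula to the $\lambda$-dependent auxiliary functions $e_j^T(B-\lambda)^{-1}X$; since these are \emph{not} Helmholtz solutions for wave number $\lambda$, one must use the general representation formula, which produces the extra Newton-potential term $N(\lambda)\left(\laplace-\lambda^2\right)\left(e_j^T(B-\lambda)^{-1}X\right)$. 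The crux is then the algebraic identity
\begin{align*}
\left(\prodOp{\laplace}-\lambda^2\right)(B-\lambda)^{-1}X
=(B-\lambda)^{-1}\left(B^2-\lambda^2\right)X
=(B+\lambda)X,
\end{align*}
which shows that this extra term is holomorphic in $\lambda$ on $\C_+$, so its integral over the closed contour $\mathcal{C}$ vanishes by Cauchy's theorem, while the remaining term $\frac{1}{2\pi\ii}\int_{\mathcal{C}} e_j^T(B-\lambda)^{-1}X\,d\lambda$ reproduces $e_j^T X$. Your proposal locates the difficulty elsewhere, in routine matters (commuting traces with the contour integral, uniform bounds along $\mathcal{C}$), which is not where the work lies: without the auxiliary-function device and the cancellation of the Newton-potential contribution, the functional-calculus lift does not go through. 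Your alternative route (verifying that $\Phi(\cdot,\cdot;B)$ is a fundamental solution of $\laplace-B^2$ and redoing Green's identities at the system level) is viable in principle but is not developed, and it is not the argument the appendix actually carries out.
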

\begin{proof}
  The result is well-known for the scalar case, and easily generalizes to the case of systems 
  (see Appendix~\ref{appendix_bem_systems} for the details).
\end{proof}

\begin{corollary}
  \label{cor:calderon_hatu}
  The Z-transform $\hat{U}$ satisfies the following boundary integral equations (cf.~(\ref{eq:referee-wanted-this})):
  \begin{align}    
\label{eq:cor:calderon_hatu-10}
    \begin{pmatrix}
      \frac{1}{2} - K  & V \\
      W & -\frac{1}{2} + K^T
    \end{pmatrix}
    \colvec{ \gamma^-\hat{U} \\ \partial_n^-\hat{U} }
    &=\colvec{ 0 \\  -\partial_n^- \hat{U}},
  \end{align}
  where all operators are understood with respect to the matrix
  \begin{align}
    \label{eq:def_Bz}
    B(z)&:=\sqrt{-\left( \frac{\ii \delta(z)}{k} - \mathcal{V}_0\right)},
  \end{align}
  using the principal branch of the square root (i.e. satisfying $\Re(z) \geq 0$) and the Riesz-Dunford calculus.
  \begin{proof} 
    The function $X$ defined by $X=\hat{U}$ in $\Omega^+$ and $0$ in $\Omega^-$ satisfies the Helmholtz equation. 
    Applying Proposition~\ref{proposition:calderon} to $X$ and using afterwards the fact that 
    $\gamma^- \hat{U} = \gamma^+\hat{U}$, $\partial_n^+ \hat{U} = \partial_n^- \hat{U}$ gives the stated result.
  \end{proof}
  
\end{corollary}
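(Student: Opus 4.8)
The plan is to deduce the boundary integral system from the Calder\'on identities of Proposition~\ref{proposition:calderon} applied to the extension-by-zero of $\hat{U}$ across $\Gamma$. First I would rewrite the governing equation in the form required by that proposition: combining \eqref{eq:referee-wanted-this} with the definition \eqref{eq:def_Bz} and using that the principal square root satisfies $B(z)^2 = -\left(\tfrac{\ii\delta(z)}{k}-\mathcal{V}_0\right)$ in the Riesz-Dunford calculus, the Z-transform $\hat{U}$ solves $-\prodOp{\laplace}\hat{U} + B(z)^2 \hat{U} = 0$ in $\Omega^+$. Before invoking the layer operators I would verify that $B=B(z)$ is admissible, i.e.\ $\sigma(B(z))\subseteq\C_+$ so that $V(B),K(B),K^T(B),W(B)$ are well defined: by Proposition~\ref{prop:spectrum_of_delta} we have $\sigma(\delta(z))\subseteq\C_+$ for $\abs{z}<1$, whence every eigenvalue of $B(z)^2$ has strictly negative imaginary part (it equals $\mathcal{V}_0+\tfrac{b}{k}-\tfrac{\ii a}{k}$ for an eigenvalue $a+\ii b$ of $\delta(z)$ with $a>0$) and therefore avoids $(-\infty,0]$, so the principal branch maps it into the open right half-plane.

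Next I would introduce $X:=\hat{U}$ in $\Omega^+$ and $X:=0$ in $\Omega^-$. Each piece lies in $\prodSpace{H^1}$ of its subdomain and solves the equation there (trivially in $\Omega^-$), so $X\in\prodSpace{H^1}(\R^d\setminus\Gamma)$ meets the hypothesis of Proposition~\ref{proposition:calderon}, and the remaining work is purely to read off the traces and jumps of $X$. Since $X$ vanishes in $\Omega^-$ the interior data are $\gamma^- X=0$ and $\partial_n^- X=0$, whereas the exterior data are $\gamma^+ X=\gamma^+\hat{U}$ and $\partial_n^+ X=\partial_n^+\hat{U}$; consequently $\traceJump{X}=-\gamma^+\hat{U}$ and $\normalJump{X}=-\partial_n^+\hat{U}$. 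Substituting into the Calder\'on system collapses its left-hand side to $\colvec{0\\ \partial_n^+\hat{U}}$ and lets me factor a $(-1)$ out of the jump vector on the right.

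It remains to identify the exterior Cauchy data with the interior ones. Because $\mathbf{H}$ acts on $H^2(\R^d)$, the stage vectors and hence $\hat{U}$ are $H^2(\R^d)$-valued, so $\hat{U}$ has no trace or normal-derivative jump across the interior surface $\Gamma$, giving $\gamma^+\hat{U}=\gamma^-\hat{U}$ and $\partial_n^+\hat{U}=\partial_n^-\hat{U}$. After this replacement, multiplying both sides by $-1$ transfers the sign to the left-hand vector, $\colvec{0\\ -\partial_n^-\hat{U}}$, and yields precisely \eqref{eq:cor:calderon_hatu-10}. The computation is routine once set up; the only delicate points I anticipate are justifying that the zero extension is $\prodSpace{H^1}$-conforming so that no spurious surface contributions appear---which is exactly what the jump formulation of the Calder\'on system is designed to handle---and the right to speak of the \emph{interior} traces $\gamma^-\hat{U},\partial_n^-\hat{U}$ of a function originally produced only on $\Omega^+$, which rests on the $H^2(\R^d)$-regularity and the resulting continuity of $\hat{U}$ across $\Gamma$.
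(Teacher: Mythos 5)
Your proposal is correct and follows the paper's own argument: extend $\hat{U}$ by zero to $\Omega^-$, apply the Calder\'on system of Proposition~\ref{proposition:calderon}, read off the jumps, and use $\gamma^+\hat{U}=\gamma^-\hat{U}$, $\partial_n^+\hat{U}=\partial_n^-\hat{U}$. You additionally make explicit two points the paper leaves implicit---the spectral check $\sigma(B(z))\subseteq\C_+$ via Proposition~\ref{prop:spectrum_of_delta} and the $H^2(\R^d)$-regularity underlying the equality of interior and exterior Cauchy data---both of which are verified correctly.
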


\begin{notation}
  For simplicity we will often drop the matrix dependence in the arguments of BEM operators and just write, 
for example, $V(z)$ instead of $V(B(z))$.  If it is not explicitly stated otherwise, the BEM operators will 
always be understood 
``with respect to the matrix  $B(z):=\sqrt{-\left( \frac{\ii \delta(z)}{k} - \mathcal{V}_0\right)}$''.
\eremk
\end{notation}

\begin{remark}
The fundamental solution $\Phi$ is an analytic function on $\C_+$. 
This implies that also the boundary integral operators depend analytically on the wave number $s$. 
Thus, also $z \to V\left(B(z)\right)$, etc. are analytic.    
\eremk
\end{remark}

Using the stability estimates of Lemma~\ref{sd_apriori} and \ref{sd_stages_apriori} it is easy to see  that we have the estimate
\begin{align*}
  \ltwonorm{\mathbf{H} \sdU{n}} \leq C(k) \left(\ltwonorm{\sdU{n}} + \ltwonorm{\sdu{n}} \right) \leq C(k) \ltwonorm{\sdu{0}}.
\end{align*}
Hence, the power series $\sum_{n=0}^{\infty}{ z^{n} \prodOp{\laplace} \sdU{n} }$ also converges, and we can apply $\gamma^-$ and $\partial^-_n$
to calculate
\begin{align*}
  \partial_n^- \hat{U} &= \partial_n^{-} \sum_{n=0}^{\infty}{\sdU{n} z^n} = \sum_{n=0}^{\infty}{\partial_n^- \sdU{n} z^{n}} = \widehat{\left( \partial_n^{-} \sdU{n} \right)_n}, \\
\gamma^- \hat{U} &= \widehat{\left(\gamma^- \sdU{n}\right)_n}. 
\end{align*}

We will use the following notation, which is standard in the literature on convolution quadrature:
\begin{definition}
\label{def:discrete_operational_calculus}
Let $X$, $Y$ be two Banach spaces and ${\mathcal B}(X,Y)$ be the space of bounded linear operators mapping
from $X$ to $Y$. Let $F: \C_+ \to \mathcal{B}(X,Y)$ be holomorphic.  
  Let $g = (g_n)_{n \in \N_0}$ be a sequence of elements in $\prodSpace{X}$.
  We define a sequence $F(\partial_t^k) g$ as
  \begin{align*}
    \left(F(\partial_t^k) g\right)_n :=\sum_{j=0}^{n}{ W^{n-j}(F) g_j},
  \end{align*}
  where the operators $W^{n-j}$ are defined as the coefficients of the power series
  \begin{align}
    \label{eq:def_cq_op_calc_weights}
    F\left(\frac{\delta(z)}{k}\right)&=:
    \sum_{j=0}^{\infty}{W^j(F) z^j}.
  \end{align}
  Here, $\delta(z)$ is defined in \eqref{def:delta}.
  Since we will always be dealing with operators of the form $F\left(\sqrt{-\ii z+ V_0}\right)$ (see  $B(z)$ as defined in Proposition~\ref{proposition:calderon}),
  we will just shorten the notation to $F(\partial_t^k) g:=(F \circ \sqrt{- \ii \cdot+V_0})(\partial_t^k) g$.
\end{definition}
\begin{notation}
  We will commit a slight abuse of notation in order to simplify the sequence notation. We write 
  \begin{align*}
    F(\partial_t^k) g_n:= \left( F(\partial_t^k) g \right)_n,
  \end{align*}
  i.e., we pretend $F(\partial_t^k)$ acts like an operator on $g_n$ instead of on the whole sequence.
  This should not lead to confusion, we only have to remember that all the CQ-operators will always be 
  non-local in time.
\end{notation}
An important property of the definition above, which makes it useful for deriving transparent boundary conditions, is that it commutes with the Z-transform. We formalize this in the following lemma:
\begin{lemma}
\label{lemma:z_transform_op_calc}
  Let $F$ and $g$ be as in Definition~\ref{def:discrete_operational_calculus}.
  Assume that $\widehat{g}(z)$ exists for $\abs{z}$ sufficiently small.
  Then $\left(\widehat{F(\partial_t^k) g}\right)(z)$ also exists, and the following identity holds:
  \begin{align*}
    \widehat{F(\partial_t^k) g} &= F\left(\frac{\delta(z)}{k} \right) \widehat{g}.
  \end{align*}
  \begin{proof}
    We start with the right-hand side. Abbreviate $\tilde{z}:=\frac{\delta(z)}{k}$.
    Inserting the power series from the definition of the coefficients $W^n$ and using the Cauchy product formula gives
    \begin{align*}
      F(\tilde{z}) \hat{g}&=\left(\sum_{n=0}^{\infty}{W^n z^n}\right)\left(\sum_{j=0}^{\infty}{g_j z^j}\right) 
      =\sum_{n=0}^{\infty}{z^n\Big(\sum\nolimits_{j=0}^{n}{W^{n-j} g_j} \Big)}
      = \widehat{F(\partial_n^k) g}.
\qedhere
    \end{align*}
  \end{proof}
\end{lemma}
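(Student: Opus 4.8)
The plan is to observe that the sequence $F(\partial_t^k) g$ is, by Definition~\ref{def:discrete_operational_calculus}, nothing but the discrete convolution of the operator-valued weight sequence $\left(W^j(F)\right)_{j \in \N_0}$ with the sequence $(g_j)_{j \in \N_0}$, and that the Z-transform (i.e., the generating function) converts such convolutions into products. Concretely, I would start from the right-hand side $F\left(\frac{\delta(z)}{k}\right) \widehat{g}$, substitute the two defining power series, and read off the coefficient of $z^n$ via the Cauchy product formula; this coefficient is exactly $\sum_{j=0}^{n} W^{n-j}(F) g_j = \left(F(\partial_t^k) g\right)_n$, which identifies the product as $\widehat{F(\partial_t^k) g}$.

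Before performing this computation, I would first secure that both series genuinely converge in a common disk around $z = 0$. For the weight series this amounts to showing that $z \mapsto F\left(\frac{\delta(z)}{k}\right)$ is analytic near $z = 0$ with values in $\mathcal{B}\left(\prodSpace{X}, \prodSpace{Y}\right)$: by Proposition~\ref{prop:spectrum_of_delta}, for an A-stable method with invertible $A$ and $\abs{z} < 1$ the spectrum $\sigma(\delta(z))$ lies in the open right half-plane $\C_+$, and since $k > 0$ the same holds for $\sigma\left(\frac{\delta(z)}{k}\right)$, which is precisely the domain of holomorphy of $F$. Hence the Riesz--Dunford calculus of Definition~\ref{def:riesz_dunford} produces a well-defined, analytic operator-valued function, whose Taylor expansion \eqref{eq:def_cq_op_calc_weights} has positive radius of convergence. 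The series $\widehat{g} = \sum_{j} g_j z^j$ has positive radius of convergence by hypothesis.

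With both radii of convergence positive, the product series converges absolutely in their common disk, which is exactly what licenses the rearrangement in the Cauchy product and guarantees that the resulting power series is the genuine Z-transform. The coefficient extraction then yields simultaneously that $\widehat{F(\partial_t^k) g}$ exists and that it equals $F\left(\frac{\delta(z)}{k}\right) \widehat{g}$. I expect the only genuine obstacle to be this convergence bookkeeping; the operator-valued nature of the weights adds only notational overhead, since each $W^j(F)$ is a bounded operator and the Cauchy product formula carries over verbatim to the product of a $\mathcal{B}\left(\prodSpace{X}, \prodSpace{Y}\right)$-valued series with a $\prodSpace{X}$-valued series.
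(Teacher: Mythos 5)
Your proposal is correct and follows essentially the same route as the paper's proof: substitute the two defining power series and identify the Cauchy product coefficients $\sum_{j=0}^{n} W^{n-j}(F)\, g_j$ with $\left(F(\partial_t^k) g\right)_n$. Your additional convergence bookkeeping (analyticity of $z \mapsto F\left(\frac{\delta(z)}{k}\right)$ near $z=0$ via Proposition~\ref{prop:spectrum_of_delta}, ensuring a positive radius of convergence for the weight series) is a welcome justification of a step the paper leaves implicit, but it does not change the argument.
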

Since we are interested in a Galerkin approximation, we will switch to a weak formulation. The following sesquilinear form, representing
the weak form of a Runge-Kutta step, will be used throughout the rest of the paper:
\begin{definition}
  For an open set $\mathcal{O}$ and a function  $g\in L^{\infty}(\mathcal{O})$ 
  we define the sesquilinear form 
${\mathcal A}_{\mathcal{O},g}{}{}$ by:
  \begin{align*}
    \blfA{\mathcal{O}}{g}{U}{V}:=\ltwoprodgenp{\mathcal{O}}{-\ii A^{-1} U}{V} + k\,\ltwoprodgenp{\mathcal{O}}{\nabla U}{\nabla V}
    + k\,\ltwoprodgenp{\mathcal{O}}{ g U}{ V}. 
  \end{align*}
\end{definition}

With this notation we can rewrite \eqref{rk_approx_whole_space_2} as an equivalent system 
with transparent boundary conditions that are realized in terms of boundary integral operators.

\begin{theorem}
\label{thm:semidiscrete_system_fem_bem_version}
Setting $\sdl{n}:=\partial^-_n \sdU{n}$, the semi-discrete problem of \eqref{rk_approx_whole_space_2} is equivalent
to the following problem for the sequence $(\sdU{n},\sdl{n})$:

For all $n \in \N$, find $\sdU{n} \in \prodHpInt{1}$, $\sdu{n} \in \hpint{1}$, $\sdl{n} \in \prodHpBdry{-1/2}$ 
such that 
\begin{subequations}
  \label{eq:semidiscrete_system_fem_bem_version_blf}
\begin{align}
  \blfAInt{\sdU{n}}{V} + k\,\bdryprodp{ W(\partial_t^k) \gamma^- \sdU{n} - \left(1/2 - K^T(\partial_t^k) \right)  \sdl{n}}{ \gamma^- V}  
  &=  \ltwoprodintp{\sdu{n}  \vecrhs}{V},  
\qquad  \forall V \in \prodHpInt{1}
\\
  \bdryprodp{(1/2 - K(\partial_t^k) ) \gamma^- \sdU{n}}{ \mu } + \bdryprodp{V(\partial_t^k) \lambda^{n}}{ \mu} &= 0.
\qquad \qquad \qquad \forall \mu \in \prodHpBdry{-1/2}. 
\end{align}
\end{subequations}
 The solution outside of $\Omega$ can be recovered by applying convolution quadrature to the
representation formula:
\begin{align*}
  U^n|_{\Omega^c}=- S(\partial_t^k) \sdl{n} + D(\partial_t^{k}) \gamma^- U^n.
\end{align*}

Introducing the operator $\mathcal{A}_{int}: \prodSpace{H^{1}}(\Omega) \to (\prodSpace{H^1}(\Omega))'$
corresponding to $\blfAInt{\cdot}{\cdot}$, the problem (\ref{eq:semidiscrete_system_fem_bem_version_blf}) 
can be written more compactly in the matrix operator form

\begin{align}  
  \label{eq:semidiscrete_system_fem_bem_version_matop}
  \renewcommand\arraystretch{2}
  \begin{pmatrix}
    \mathcal{A}_{int} + k\; (\gamma^{-})^\prime W(\partial_t^k)\gamma^- & k\;(\gamma^{-})^\prime \left(-1/2 + K^T(\partial_t^k) \right) \\
    \left( 1/2 - K(\partial_t^k) \right) \gamma^- & V(\partial_t^k) 
  \end{pmatrix}
  \begin{pmatrix}
    \sdU{n} \\ \sdl{n}
  \end{pmatrix}
  &=
  \renewcommand\arraystretch{2}
  \begin{pmatrix}
    \sdu{n} \, \vecrhs \\ 0
  \end{pmatrix},
\end{align}
where $\gamma^-$ denotes the trace operator and $(\gamma^-)^\prime$ its adjoint, and the equality is
understood in the sense of $(\prodOp{H^1}(\Omega))' \times \prodOp{H^{1/2}}(\Gamma)$.
\end{theorem}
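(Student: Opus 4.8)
The plan is to pass to the Z-domain, where the exterior contribution has already been reduced to boundary integral operators in Corollary~\ref{cor:calderon_hatu}, to assemble the coupled system there, and then to transform back to the sequence domain using the commutation property of Lemma~\ref{lemma:z_transform_op_calc}. First I would treat the interior. Restricting the stage equation in \eqref{rk_approx_whole_space_2} to $\Omega$, testing against $V \in \prodHpInt{1}$, and integrating by parts the term $-k\,\prodOp{\laplace}\sdU{n}$ produces the natural Neumann boundary contribution, giving
\begin{align*}
  \blfAInt{\sdU{n}}{V} - k\,\bdryprodp{\sdl{n}}{\gamma^- V} = \ltwoprodintp{\sdu{n}\vecrhs}{V},
\end{align*}
with $\sdl{n} = \partial_n^- \sdU{n}$. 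The only remaining task is to express the unknown Neumann datum $\sdl{n}$ through boundary integral operators so that the exterior is fully accounted for.

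Next I would exploit the exterior analysis. By Assumption~\ref{assumption:problem} the potential is constant and the data vanish in $\Omega^+$, so Lemma~\ref{z_transform_lemma} applies and Corollary~\ref{cor:calderon_hatu} yields the Calderón identities for $\hat{U}$ with respect to $B(z)$, where $\widehat{\lambda}=\partial_n^-\hat{U}$. The second row of that system, after writing $-1/2+K^T = -(1/2-K^T)$, reads
\begin{align*}
  W\,\gamma^-\hat{U} - (1/2 - K^T)\,\widehat{\lambda} = -\widehat{\lambda},
\end{align*}
which I would use to replace $-k\,\bdryprodp{\widehat{\lambda}}{\gamma^- V}$ in the Z-transformed interior equation by $k\,\bdryprodp{W\gamma^-\hat{U} - (1/2-K^T)\widehat{\lambda}}{\gamma^- V}$; the first row supplies the second (coupling) equation $(1/2-K)\gamma^-\hat{U} + V\widehat{\lambda}=0$. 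Applying Lemma~\ref{lemma:z_transform_op_calc}, every Z-domain product $F(B(z))\widehat{g}$ is the Z-transform of the convolution-quadrature sequence $F(\partial_t^k) g$; matching power-series coefficients then delivers exactly the two equations of \eqref{eq:semidiscrete_system_fem_bem_version_blf}, and introducing $\mathcal{A}_{int}$ together with the adjoint trace $(\gamma^-)'$ gives the operator form \eqref{eq:semidiscrete_system_fem_bem_version_matop}.

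Finally I would address the equivalence in both directions. The computation above shows that the restriction of a whole-space solution, paired with its interior Neumann trace, solves the coupled system. Conversely, given a solution $(\sdU{n},\sdl{n})$ of \eqref{eq:semidiscrete_system_fem_bem_version_blf}, I would define the exterior iterates through the stated representation formula $\sdU{n}|_{\Omega^c} = -S(\partial_t^k)\sdl{n} + D(\partial_t^k)\gamma^- \sdU{n}$, verify via the jump relations \eqref{bem_operator_properties} that the resulting field solves the exterior stage equation with matching Cauchy data, and reassemble the whole-space recursion \eqref{rk_approx_whole_space_2}.

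The main obstacle is the bookkeeping that makes the Z-transform argument rigorous: one must ensure that the power series for $\hat{U}$ and $\widehat{\lambda}$ actually converge (guaranteed by the a priori bounds of Lemmas~\ref{sd_apriori} and \ref{sd_stages_apriori}) so that Lemma~\ref{lemma:z_transform_op_calc} may be invoked coefficient-wise, and one must track the signs in the Calderón system precisely in order to land on the stated form. The reverse direction additionally rests on the mapping and uniqueness properties of the layer potentials for the matrix-valued Helmholtz-like system \eqref{eq:referee-wanted-this}, which are established in Appendix~\ref{appendix_bem_systems}.
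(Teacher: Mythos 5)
Your forward direction---testing the stage equation over $\Omega$, integrating by parts, and eliminating the Neumann datum through the Calder\'on identities of Corollary~\ref{cor:calderon_hatu} pulled back to the time domain via Lemma~\ref{lemma:z_transform_op_calc} and uniqueness of power-series coefficients---is exactly the paper's argument; whether the substitution of the boundary term happens before or after inverting the Z-transform is immaterial.

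The difference, and the gap, lies in the equivalence. The paper does \emph{not} prove a constructive converse inside this theorem: it proves only the forward implication and defers equivalence to uniqueness of the coupled system (Corollary~\ref{corollary:uniqueness}, itself a consequence of Lemma~\ref{equivalent_formulation_lemma}). Your converse, via the representation formula and the jump relations, is the classical symmetric-coupling route and can be made to work, but not with the justification you give. First, to run the argument in the Z-domain you must Z-transform an \emph{arbitrary} solution $(\sdU{n},\sdl{n})$ of \eqref{eq:semidiscrete_system_fem_bem_version_blf}, and the a priori bounds you invoke (Lemmas~\ref{sd_apriori} and \ref{sd_stages_apriori}) control only the whole-space semi-discrete solution; they say nothing about a given solution of the coupled system. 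Without injectivity of the one-step operator (which is precisely the uniqueness statement the paper defers to), such a solution admits no growth bound, and the series $\hat{U}$, $\hat{\lambda}$ need not converge for any $z\neq 0$. The fix is either to prove one-step injectivity first (at which point your constructive converse becomes redundant---this is the paper's route), or to observe that every identity you use (jump relations, the exterior stage equation satisfied by the potential, matching of Cauchy data) holds coefficient-wise as a formal power-series identity, so the converse can be run recursively in $n$ with no convergence statement at all. Second, matching Cauchy data alone does not finish the proof: you must also identify $\sdl{n}$ with $\partial_n^-\sdU{n}$, which requires showing that the potential $-S(\partial_t^k)\sdl{n}+D(\partial_t^k)\gamma^-\sdU{n}$ vanishes in $\Omega$, or equivalently that $V(B(0))$ is injective. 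That ingredient is established in the proof of Lemma~\ref{equivalent_formulation_lemma} using the Laliena--Sayas ellipticity of $V(s)$; it is not contained in Appendix~\ref{appendix_bem_systems}, which provides only the matrix-valued representation formula and the solvability lemma you cite.
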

\begin{proof}
  Here, we will only show that the sequences $\sdU{n},\sdl{n}$ solve 
  the problem (\ref{eq:semidiscrete_system_fem_bem_version_blf}).
  The equivalence will follow later from the uniqueness of the solution, as shown in 
  Corollary~\ref{corollary:uniqueness}. Recall (\ref{eq:cor:calderon_hatu-10}) of 
  Corollary~\ref{cor:calderon_hatu}. Using Lemma~\ref{lemma:z_transform_op_calc} and 
  exploiting that the coefficients of a power series are unique, we get:
  \begin{align}    
\label{eq:thm:semidiscrete_system_fem_bem_version-10}
    \begin{pmatrix}
      \frac{1}{2} - K(\partial_t^k)  & V(\partial_t^k) \\
      W(\partial_t^k) & -\frac{1}{2} + K^T(\partial_t^k)
    \end{pmatrix}
    \colvec{ \gamma^-{U^n} \\ \partial_n^-{U^n} }
    &=\colvec{ 0 \\  -\partial_n^- {U^n}}. 
  \end{align}
  We multiply \eqref{rk_approx_whole_space_2} with a test function $V \in \prodSpace{H^1}(\Omega)$, integrate
  over $\Omega$ and integrate by parts. The resulting boundary term $-k\bdryprodp{\partial^-_n \sdU{n}}{\gamma^- V}$ 
  can be replaced using the second equation of (\ref{eq:thm:semidiscrete_system_fem_bem_version-10}),
  and we arrive at (\ref{eq:semidiscrete_system_fem_bem_version_blf}). 
\end{proof}

\begin{remark}
  Looking at \eqref{eq:semidiscrete_system_fem_bem_version_matop} we clearly see the relation to the 
symmetric coupling of finite elements and boundary elements, as developed by 
Costabel \cite{costabel_symmetric_coupling} and Han \cite{han_a_new_fembem_coupling}.
We only had to replace the appearing boundary operators with the convolution quadrature version, 
e.g., $W \to W(\partial_t^k)$ etc.
\eremk
\end{remark}

\section{Spatial discretization} 
\label{sect:discretization}
In order to get a fully discrete scheme, we choose closed spaces
$X_h \subseteq \hpint{1}$ and $Y_h \subseteq \hpbdry{-1/2}$. Then the fully discrete problem is given by:

\begin{problem}
\label{problem:fully_discrete}
For all $n \in \N$, find $\fdU{n} \in \prodSpace{X_h}$, $\fdu{n} \in X_h$, $\fdl{n} \in \prodSpace{Y_h}$
such that for all $V_h \in \prodSpace{X_h}$, $\mu_h \in \prodSpace{Y_h}$,
\begin{subequations}
  \label{eq:fully_discrete_system_fem_bem_version_blf}
\begin{align}
  \label{eq:fully_discrete_system_fem_bem_version_blf-a}
  \blfAInt{\fdU{n}}{V_h} + k\,\bdryprodp{W(\partial_t^k) \gamma^- \fdU{n}  - 
  \left(1/2 - K^T(\partial_t^k) \right)  \fdl{n} } {\gamma^-V_h}
  &=  \ltwoprodintp{\fdu{n} \vecrhs}{V_h},  \\
  \label{eq:fully_discrete_system_fem_bem_version_blf-b}
  \bdryprodp{\big(1/2 - K(\partial_t^k) \big)\gamma^- \fdU{n}}{ \mu_h } + \bdryprodp{V(\partial_t^k) \fdl{n}}{ \mu_h} &= 0.
\end{align}
The approximation at $t=(n+1)k$ is then defined as:
\begin{align}
  \fdu{n+1}&=R(\infty) \fdu{n} + b^T A^{-1} \fdU{n}.
\end{align}
\end{subequations}
Define
\begin{subequations}
 \label{definition_ustar_explicit}
\begin{align}
  \fdUstar{n} (x)&
  :=\left(- S(\partial_t^k) \fdl{n} \right) (x) + \left(D(\partial_t^{k}) \gamma^- \fdU{n} \right) (x), \quad x \in \R^{d} \setminus \Gamma, \\
  \fdustar{n+1}&:=R(\infty) \fdustar{n} + b^T A^{-1} \fdUstar{n}.
\end{align}
The restrictions $\fdUstar{n}|_{\Omega^+}$ and $\fdustar{n}|_{\Omega^+}$ can be understood as 
approximations to $\sdU{n}|_{\Omega^+}$ and $\sdu{n}|_{\Omega^+}$.  
\end{subequations}
\end{problem}

\begin{remark}
  The fact that we allowed $x \in \Omega$ in the definition of $\fdUstar{n}$ will be important 
  for the later characterization of the FEM-BEM coupling problem as a PDE problem in $\R^n$.  
\eremk
\end{remark}

%

In the following, we will derive a problem that is equivalent to Problem~\ref{problem:fully_discrete} and that is better suited for theoretical analysis
since it avoids the non-locality in time of the convolution terms. However, it will no longer consist of computable
terms due to its being posed on the whole space. The construction is such that under the Z-transform it will result in the
non-standard transmission problem from \cite{laliena_sayas_cq_scatt} for the symmetric FEM-BEM coupling.

We introduce the following spaces:
\begin{align}
\pairltwo&:=\ltwogen{\Omega} \times \ltwogen{\R^d \setminus \Gamma}, & \pairhone:=\hpgen{1}{\Omega} \times \hpgen{1}{\R^d \setminus \Gamma},
\end{align}
equipped with the sum inner products, and a new sesquilinear form on $\prodSpace{\pairhone}$:
\begin{align}
  \label{definition_b}
  \blfB{U}{U^*}{V}{V^*}:= \blfAInt{U}{V} + \blfAFull{U^*}{V^*}.
\end{align}

For the analysis it will be useful to introduce a stabilized energy sesquilinear form.
Let 
\begin{equation}
\label{eq:alpha} 
\alpha > 1+ \norm{\mathcal{V}}_{L^{\infty}(\R^d)}
\end{equation}
and set:
\begin{align}
  \label{def:hstab}
  \blfHStab{u}{v}&:=\innerprod{\pairltwo}{\nabla u}{\nabla v}+ \innerprod{\pairltwo}{\mathcal{V} u}{v}+\alpha \innerprod{\pairltwo}{u}{v}
\end{align}
for all $u,v \in \pairhone$. Here $\mathcal{V}u$ denotes multiplication with $\mathcal{V}( \cdot )$ in the first component and $\mathcal{V}_0$ in
the second.
It is easy to see that $\blfHStab{u}{u}$ is equivalent to the $\pairhone$-norm with a constant that depends only on $\mathcal{V}$ and $\alpha$. 
We flag at this point that $\widetilde{\mathbf H} $ will also used to denote the 
operator induced by the sequilinear form (\ref{def:hstab}). Furthermore, we will require later 
$\underline{\widetilde {\mathbf H}}(\cdot,\cdot)$ and 
$\underline{\widetilde {\mathbf H}}$ to denote sesquilinear forms and induced operators on products of spaces. 

We recall the definition of the annihilator of a subspace:
\begin{definition}
\label{def_anihilator}
  Let $X \subseteq Y$ be Banach spaces. The annihilator of $X$ in $Y$, 
  denoted $X^\circ \subseteq Y^{\prime}$, is defined by
  \begin{align*}
    X^{\circ}:=\left\{ f \in Y^\prime\; : \; \dualprod{Y}{f}{x} = 0 \;\, \forall x \in X \right\}.
  \end{align*}
\end{definition}

We are now able to formulate the equivalent problem in the following lemma:
\begin{lemma}
\label{equivalent_formulation_lemma}
 For given Hilbert spaces $X_h \subseteq H^1(\Omega)$ and $Y_h \subseteq H^{-1/2}(\Gamma)$, define the space
 \begin{align*}
   \hat{H}(X_h,Y_h):=\{(v,v^*) \in X_h \times \hpfull{1} : \traceJump{v^*} = -\gamma^- v \land \gamma^- v^* \in Y_h^{\circ} \}.
 \end{align*}
  
  Then the sequence of problems: Find $(\fdU{n},\fdUstar{n}) \in \prodSpace{\hat{H}(X_h,Y_h)}$ such that
  \begin{align}
    \label{equivalent_formulation_lemma_eqn}
    \blfB{\fdU{n}}{\fdUstar{n}}{V_h}{V^*}&= \blfFp{\fdu{n} \vecrhs}{\fdustar{n} \vecrhs}{V_h}{V^*} & \forall (V_h,V^*) \in \prodSpace{\hat{H}(X_h,Y_h)},
    \end{align}    
    where the $\fdu{n+1}$ and $\fdustar{n+1}$ are again defined in the usual way, i.e., 
  \begin{align*}
    \fdu{n+1}&:=R(\infty) \fdu{n} + \vect{b}^T A^{-1} \fdU{n},
&
    \fdustar{n+1}&:=R(\infty) \fdustar{n} + \vect{b}^T A^{-1} \fdUstar{n},
  \end{align*}
  is equivalent to the fully discrete problem (Problem~\ref{problem:fully_discrete}) with the 
understanding that $\fdustar{n}, \fdUstar{n}$ are defined by the post-processing of \eqref{definition_ustar_explicit}.

In particular, for $X_h=H^1(\Omega)$ and $Y_h=\hpbdry{-1/2}$, 
the approximations 
\begin{align*}
\begin{cases} 
\fdU{n} & \text{ in $\Omega$} \\
\fdUstar{n}|_{\R^d\setminus\overline{\Omega}} & \text{ in $\R^d \setminus \overline{\Omega}$}
\end{cases}, 
\qquad \qquad 
\begin{cases} 
\fdu{n} & \text{ in $\Omega$} \\
\fdustar{n}|_{\R^d\setminus\overline{\Omega}} & \text{ in $\R^d \setminus \overline{\Omega}$}
\end{cases}, 
\end{align*}
coincide with those of \eqref{rk_approx_whole_space_1}. 
Furthermore, $\fdustar{n}|_{\Omega} = 0$ and $\fdUstar{n}|_{\Omega} = 0$. Finally, 
$\traceJump{\fdUstar{n}}= - \gamma^- \fdU{n}$ and $\normalJump{\fdUstar{n}} = -\partial^-_n \fdU{n} = -\lambda^n_h$. 
\end{lemma}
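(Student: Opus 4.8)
\emph{Strategy.} I would prove the equivalence by transporting both formulations through the Z-transform: Problem~\ref{problem:fully_discrete} and the whole-space problem \eqref{equivalent_formulation_lemma_eqn} reduce, after applying the Z-transform, to one and the same $z$-parametrized transmission problem, and injectivity of the Z-transform (legitimate since both sequences obey exponential bounds by Lemma~\ref{sd_stages_apriori} and the a priori estimates) carries the equivalence back to the sequence level. Concretely, I would start from a solution $(\fdU{n},\fdu{n},\fdl{n})$ of Problem~\ref{problem:fully_discrete} and define $\fdUstar{n},\fdustar{n}$ by the post-processing \eqref{definition_ustar_explicit}. Membership $(\fdU{n},\fdUstar{n})\in\prodSpace{\hat{H}(X_h,Y_h)}$ can then be checked directly in the sequence domain: the single- and double-layer jump relations in \eqref{bem_operator_properties} are uniform in the wave number and hence commute with the convolution quadrature, so the post-processing produces exactly the jump $\traceJump{\fdUstar{n}}=-\gamma^-\fdU{n}$, while the annihilator requirement in the definition of $\hat{H}$ is precisely the sequence form of the second coupling equation \eqref{eq:fully_discrete_system_fem_bem_version_blf-b}.

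\emph{The variational identity and the key reconciliation.} To obtain \eqref{equivalent_formulation_lemma_eqn} I would pass to the Z-transform, where by Lemma~\ref{lemma:z_transform_op_calc} every operator $F(\partial_t^k)$ becomes $F(B(z))$; thus the post-processing is the Helmholtz representation formula $\hat{U}^*=-S(B(z))\hat\lambda+D(B(z))\gamma^-\hat{U}$, and $\hat{U}^*$ solves the homogeneous equation $-\prodOp{\laplace}\hat{U}^*+B(z)^2\hat{U}^*=0$ in $\R^d\setminus\Gamma$. The delicate point is that this homogeneous equation carries $\delta(z)$ (through $B(z)^2=-\ii\delta(z)/k+\mathcal{V}_0$), whereas the form $\blfAFull{\cdot}{\cdot}$ carries $-\ii A^{-1}$ together with the inhomogeneous datum $\hat{u}^*\vecrhs$. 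These coincide because of the Sherman--Morrison identity $\delta(z)=A^{-1}-\frac{z\,A^{-1}\ones\vect{b}^T A^{-1}}{1-zR(\infty)}$ from Lemma~\ref{z_transform_lemma} combined with the step update $\hat{u}^*=\frac{z}{1-zR(\infty)}\vect{b}^T A^{-1}\hat{U}^*$: the rank-one correction hidden in $\delta(z)$ is turned precisely into the source $\hat{u}^*\vecrhs$. Granting this, I would integrate $\blfAFull{\hat{U}^*}{V^*}$ by parts over $\Omega$ and $\Omega^+$, discard the volume contribution via the PDE, insert the jump conditions satisfied by the test pair $(V_h,V^*)\in\hat{H}$, and combine with the first coupling equation \eqref{eq:fully_discrete_system_fem_bem_version_blf-a}; this is exactly the Laliena--Sayas equivalence between the symmetric coupling and the non-standard transmission problem, now for the matrix wave number $B(z)$, which I would quote for the routine boundary manipulations. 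The converse implication follows by reversing the computation, or more economically by invoking uniqueness (Corollary~\ref{corollary:uniqueness}).

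\emph{The full-space case.} For $X_h=H^1(\Omega)$ and $Y_h=H^{-1/2}(\Gamma)$ the coupling system \eqref{eq:fully_discrete_system_fem_bem_version_blf-a}--\eqref{eq:fully_discrete_system_fem_bem_version_blf-b} is the semi-discrete FEM--BEM system of Theorem~\ref{thm:semidiscrete_system_fem_bem_version}. That theorem already exhibits the restrictions of the whole-space Runge--Kutta scheme \eqref{rk_approx_whole_space_1} as a solution, so by uniqueness (Corollary~\ref{corollary:uniqueness}) they coincide: $\fdU{n}=\sdU{n}|_\Omega$, $\fdl{n}=\partial_n^-\sdU{n}$, and the post-processing reproduces the exterior field $\fdUstar{n}|_{\R^d\setminus\overline{\Omega}}=\sdU{n}|_{\R^d\setminus\overline{\Omega}}$. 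Since $\fdUstar{n}$ is the representation formula applied to the exact exterior Cauchy data, it vanishes in $\Omega$ (the extension-by-zero underlying Corollary~\ref{cor:calderon_hatu}), giving $\fdUstar{n}|_\Omega=0$ and hence $\fdustar{n}|_\Omega=0$ by the induction defining $\fdustar{n}$. The stated identities $\traceJump{\fdUstar{n}}=-\gamma^-\fdU{n}$ and $\normalJump{\fdUstar{n}}=-\partial_n^-\fdU{n}=-\fdl{n}$ then follow directly from the single- and double-layer jump relations in \eqref{bem_operator_properties}.

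\emph{Main obstacle.} The hard part is the algebraic reconciliation together with the sign and duality bookkeeping in the $z$-domain: one must make the layer-potential jump relations \eqref{bem_operator_properties}, the boundary duality pairings on $\Gamma$, and the $\delta(z)\leftrightarrow A^{-1}$ identity line up consistently, and justify the integration by parts in the product (matrix-valued) setting. By comparison, the functional-analytic ingredients --- existence of the Z-transforms, their injectivity, and the back-transformation --- are routine given the a priori bounds already available.
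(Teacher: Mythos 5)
Your proposal is correct in substance but organizes the argument genuinely differently from the paper. The paper does \emph{not} take the post-processing \eqref{definition_ustar_explicit} as the definition of $(\fdUstar{n},\fdustar{n})$ at the outset; it instead constructs these functions as solutions of auxiliary elliptic variational problems with prescribed jump $\traceJump{\fdUstar{n}}=-\gamma^-\fdU{n}$ (via a lifting and Lemma~\ref{lemma_solve_matrix_eqn}), feeds them into the structural Lemma~\ref{extension_lemma}, and then pays twice for this choice: first with a nontrivial induction establishing $\normalJump{\fdUstar{n}}=-\fdl{n}$, which rests on injectivity of $V^0=V(B(0))$ on $\prodSpace{Y_h}$ (Bamberger--Ha Duong/Laliena--Sayas coercivity transported through the Riesz--Dunford calculus), and second with an a posteriori identification of the constructed sequence with the post-processed one, done by Z-transform and uniqueness of the Helmholtz problem. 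Your route --- define $\fdUstar{n}$ by the post-processing, read off $\traceJump{\fdUstar{n}}=-\gamma^-\fdU{n}$ and $\normalJump{\fdUstar{n}}=-\fdl{n}$ from the layer-potential jump relations \eqref{bem_operator_properties} (valid for each CQ weight via the contour-integral definition, hence for the convolutions), and observe that the annihilator condition is verbatim \eqref{eq:fully_discrete_system_fem_bem_version_blf-b} because $\gamma^-\fdUstar{n}=-V(\partial_t^k)\fdl{n}-\left(1/2-K(\partial_t^k)\right)\gamma^-\fdU{n}$ --- makes the forward direction shorter and bypasses the $V^0$-injectivity induction and the final identification step entirely. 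Your Sherman--Morrison reconciliation of $\delta(z)$ with $-\ii A^{-1}$ plus the source $\hat{u}^*\vecrhs$ is exactly right and is the same mechanism the paper packages inside Lemma~\ref{z_transform_lemma}; the integration-by-parts bookkeeping you sketch does close, since $\normalJump{\fdUstar{n}}\in\prodSpace{Y_h}$ pairs to zero against $\gamma^-V^*\in\prodSpace{Y_h}^\circ$ and the remaining boundary term reproduces \eqref{eq:fully_discrete_system_fem_bem_version_blf-a}.

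Two caveats. First, your ``more economical'' converse via Corollary~\ref{corollary:uniqueness} is circular as stated: in the paper that corollary (uniqueness \emph{and existence} for Problem~\ref{problem:fully_discrete}) is a consequence of this very lemma. Uniqueness of \eqref{equivalent_formulation_lemma_eqn} is indeed available independently from Lemma~\ref{lemma_solve_matrix_eqn}, but to conclude that a given solution of \eqref{equivalent_formulation_lemma_eqn} arises from Problem~\ref{problem:fully_discrete} you would additionally need existence for Problem~\ref{problem:fully_discrete}, which you have not established independently. So you must carry out your primary route (``reversing the computation''), and that direction cannot avoid the machinery you skipped in the forward direction: starting only from the variational problem, you must recover the boundary integral equations \eqref{extension_lemma-5} for the traces of $\fdUstar{n}$ (Z-transform plus Calder\'on, as in part (iii) of Lemma~\ref{extension_lemma}) and show $\normalJump{\fdUstar{n}}\in\prodSpace{Y_h}$ so that $\fdl{n}:=-\normalJump{\fdUstar{n}}$ is an admissible unknown. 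Second, your justification for the existence of the Z-transforms cites Lemma~\ref{sd_stages_apriori}, which bounds the \emph{semi-discrete} full-space sequences, not the fully discrete ones; for the latter one needs a direct induction over time steps (fixed one-step operator, CQ weights of at most exponential growth) --- a routine repair, and one the paper itself is equally terse about.
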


Before we prove this lemma, we  first take a separate look at a family of problems 
that will allow us to describe the ``exterior'' terms $(\fdustar{n}),(\fdUstar{n})$ as solutions
to elliptic problems with important trace relations.
\begin{lemma}
\label{extension_lemma}
  Let $X_h \subseteq H^1(\Omega)$, $Y_h \subseteq H^{-1/2}(\Gamma)$  be Hilbert spaces.
  Consider sequences of functions $\left(X^n_* \right)_{n\in \N}\subset \prodHpFull{1}$, $\left(x^n_* \right)_{n \in \N} \subset \hpfull{1} $ 
  that satisfy $\gamma^- X^n_* \in \prodSpace{Y_h^\circ}$ 
  and solve, for all $n \in \N$,  
  \begin{align}
    \label{extension_lemma_weak_form}
    \blfAFull{X^n_{*}}{V^*} &= \ltwoprodfullp{x^n_* \vecrhs}{V^*}  \quad \forall V^* \in \prodSpace{H^{*}_0}(Y_h), \\
    x^{n+1}_*&:=R(\infty) x_*^n + \vect{b}^T A^{-1} X^{n}_*
  \end{align}
  with $H^*_0(Y_h) := \{ v^* \in \hpfull{1} : \gamma^- v^* \in Y_h^\circ \,\land\, \traceJump{v^*} = 0 \}$.

  Then the sequences have the following properties:
  \begin{enumerate}[(i)]
  \item
    \label{item:extension_lemma-i}
    With 
    $H^*(X_h):=\left\{u \in \hpfull{1}: \traceJump{u} \in \gamma^-X_h \right\}$, there holds, 
    for all $V^* \in \prodSpace{H^*}(X_h)$,
    \begin{align}
      \blfAFull{X^n_*}{V^*} - k\,\bdryprodp{\partial^+_n{X^n_*}}{ \traceJump{V^*} } - k\,\bdryprodp{\normalJump{X^n_*}}{\gamma^- V^*}
      &= \ltwoprodfullp{x^n_* \vecrhs}{V^*}.
      \label{full_var_eq_ext_lemma}
    \end{align}
  \item 
    \label{item:extension_lemma-ii}
    On the boundary we have $\normalJump{X^n_*} \in \prodSpace{Y_h}$.
  \item 
    \label{item:extension_lemma-iii}
The traces solve 
\begin{subequations}
      \label{extension_lemma-5}
    \begin{align}
      \label{extension_lemma-10}
      \partial_n^+ X^n_*&= \left(-1/2+K^T(\partial_t^k) \right) \normalJump{X^{n}_*} + W(\partial_t^k)  \traceJump{X^{n}_*}, \\
      \label{extension_lemma-20}
      0&=\bdryprodp{V(\partial_t^k) \normalJump{X^{n}_*}}{\mu_h} + \bdryprodp{(1/2 - K(\partial_t^k)) \traceJump{X^{n}_*}}{\mu_h} 
            &\forall \mu_h \in \prodSpace{Y_h}. 
    \end{align}
\end{subequations}
  \end{enumerate}
  \begin{proof}       
    First we choose test functions $V^*= v^* \, e_j$ with $v^* \in C_0^{\infty}(\R^d \setminus \Gamma) \subseteq H_0^*(Y_h)$ 
    in (\ref{extension_lemma_weak_form}) and get by integration by parts:
    \begin{align}
\label{eq:extension_lemma:100}
      \left( - \ii A^{-1} - k\, \laplace + k\,\mathcal{V}_0  \right) X^n_* &= x^n_* \vecrhs \quad \text{ in } \R^d \setminus \Gamma.
    \end{align}
    This implies, by doing integration by parts in (\ref{extension_lemma_weak_form}),
    that if we insert arbitrary $V^* \in \prodSpace{H^*_0}(Y_h)$ (i.e., allowing  non-vanishing boundary terms), the following holds:
    \begin{align*}
      \bdryprodp{\partial_n^- X^n_*}{\gamma^- V^*} - \bdryprodp{\partial_n^+ X^n_*}{\gamma^+ V^{*}} &= 0.
    \end{align*}
{\em Proof of (\ref{item:extension_lemma-ii}):}
    Let $\xi \in \prodSpace{Y_h}^{\circ} \subseteq \prodSpace{H}^{1/2}(\Gamma)$ and choose $V^* \in \prodHpFull{1}$
    as a lifting such that $\gamma^+ V^*=\gamma^- V^* = \xi$. This gives
    \begin{align*}
      \bdryprod{\normalJump{ X^n_*}}{ \xi } &=0 \quad \forall \xi \in \prodSpace{Y_h}^{\circ},
    \end{align*}
    or $\normalJump{X^n_*} \in \left(\prodSpace{Y_h}^{\circ} \right)^{\circ}=\prodSpace{Y_h}$.
    
{\em Proof of (\ref{item:extension_lemma-iii}):}
We proceed completely analogously to the derivation of the transparent boundary conditions.    
    We take the Z-transform and see by Lemma~\ref{z_transform_lemma} that $\widehat{X_*}$ solves
    \begin{align*}
      -\left(\frac{\ii \delta(z)}{k} - V_0\right) \widehat{X_*} - \laplace \widehat{X_*} &= 0, \quad \text{ on } \R^d \setminus \Gamma.
    \end{align*}
    Applying the Calder\'on identities (Proposition~\ref{proposition:calderon}) and taking the inverse Z-transform then gives 
    (\ref{extension_lemma-5}) if we use that $\bdryprodp{\gamma^- X^n_*}{\mu_h}=0$, since $\gamma^- X^n_* \in \prodSpace{Y_h}^\circ$.

{\em Proof of (\ref{item:extension_lemma-i}):}
    Equation (\ref{full_var_eq_ext_lemma}) is a simple consequence of the differential equation (\ref{eq:extension_lemma:100}) 
    and integration by parts.
  \end{proof}
\end{lemma}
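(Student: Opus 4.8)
The plan is to first extract a strong form of the exterior equation from the variational statement, and then obtain all three assertions by integration by parts together with the Z-transform machinery already developed for the transparent boundary conditions. I would begin by testing \eqref{extension_lemma_weak_form} with $V^* = v^* e_j$ for $v^* \in C_0^\infty(\R^d \setminus \Gamma)$ and each unit coordinate vector $e_j$; such functions lie in $\prodSpace{H^*_0}(Y_h)$ since their traces on $\Gamma$ vanish. Integrating the gradient term by parts (no boundary contribution) and varying $v^*$ componentwise yields the strong equation
\[
\left(-\ii A^{-1} - k\laplace + k\mathcal{V}_0\right) X^n_* = x^n_* \vecrhs \qquad \text{in } \R^d \setminus \Gamma .
\]
In particular $\laplace X^n_* \in \prodSpace{L^2}$ on each side of $\Gamma$, so the one-sided normal traces $\partial_n^\pm X^n_*$ are well defined in $\prodSpace{H^{-1/2}}(\Gamma)$ and Green's formula is available on $\Omega$ and on $\Omega^+$ separately.

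Next, for an arbitrary $V^* \in \prodHpFull{1}$ I apply Green's identity on each subdomain; the two boundary terms appear with opposite signs because the outward normal of $\Omega^+$ is $-n$. Substituting the strong equation cancels the volume contributions against the right-hand side and leaves $\blfAFull{X^n_*}{V^*} = \ltwoprodfullp{x^n_* \vecrhs}{V^*} + k\bdryprodp{\partial_n^- X^n_*}{\gamma^- V^*} - k\bdryprodp{\partial_n^+ X^n_*}{\gamma^+ V^*}$. Writing $\gamma^+ V^* = \gamma^- V^* - \traceJump{V^*}$ and regrouping turns the two boundary terms into $k\bdryprodp{\normalJump{X^n_*}}{\gamma^- V^*} + k\bdryprodp{\partial_n^+ X^n_*}{\traceJump{V^*}}$, which is exactly \eqref{full_var_eq_ext_lemma}; since only the strong equation and integration by parts were used, this holds for every $V^* \in \prodHpFull{1}$ and in particular on $\prodSpace{H^*}(X_h)$, establishing (i).

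For (ii) I specialize the identity just derived to $V^* \in \prodSpace{H^*_0}(Y_h)$, where $\traceJump{V^*} = 0$, so that \eqref{full_var_eq_ext_lemma} collapses to $\bdryprodp{\normalJump{X^n_*}}{\gamma^- V^*} = 0$. As $V^*$ runs through $\prodSpace{H^*_0}(Y_h)$ its trace $\gamma^- V^*$ exhausts all of $\prodSpace{Y_h}^\circ$ (given $\xi \in \prodSpace{Y_h}^\circ$, take any lifting with $\gamma^- V^* = \gamma^+ V^* = \xi$). Hence $\normalJump{X^n_*}$ annihilates $\prodSpace{Y_h}^\circ$, and the bipolar theorem (double annihilator) gives $\normalJump{X^n_*} \in (\prodSpace{Y_h}^\circ)^\circ = \prodSpace{Y_h}$, using that $Y_h$ is closed.

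Finally, (iii) I would obtain by repeating the derivation of Corollary~\ref{cor:calderon_hatu}. Z-transforming the strong equation and invoking Lemma~\ref{z_transform_lemma} with $T = -\laplace + \mathcal{V}_0$ collapses the inhomogeneity and the step recursion into the homogeneous equation
\[
-\laplace \widehat{X_*} - \left(\frac{\ii \delta(z)}{k} - \mathcal{V}_0\right)\widehat{X_*} = 0 \qquad \text{in } \R^d \setminus \Gamma .
\]
Applying the matrix-valued Calderón system (Proposition~\ref{proposition:calderon}) with $B = B(z)$ to $\widehat{X_*}$ and inverting the Z-transform through the commutation property of the operational calculus (Lemma~\ref{lemma:z_transform_op_calc}) turns the second Calderón row directly into \eqref{extension_lemma-10}. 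The first row reads $\gamma^- X^n_* = (1/2 - K(\partial_t^k))\traceJump{X^n_*} + V(\partial_t^k)\normalJump{X^n_*}$; pairing it with $\mu_h \in \prodSpace{Y_h}$ and using $\gamma^- X^n_* \in \prodSpace{Y_h^\circ}$ to annihilate the left-hand side produces \eqref{extension_lemma-20}. The main obstacle I anticipate lies in this Z-transform step: it presupposes the initialization $x^0_* = 0$ and an exponential-in-$n$ bound on $\norm{X^n_*}$ so that the Z-transforms converge for small $z$ and Lemma~\ref{z_transform_lemma} applies, and one must correctly identify the closed operator $T$ so that the Riesz--Dunford calculus with $B(z)$ reproduces precisely the convolution-quadrature operators $V(\partial_t^k), K(\partial_t^k), K^T(\partial_t^k), W(\partial_t^k)$; once these points are secured, the remaining manipulations are the same bookkeeping as in the transparent-boundary-condition derivation.
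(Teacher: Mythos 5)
Your proposal is correct and follows essentially the same route as the paper: extract the strong equation \eqref{eq:extension_lemma:100} by testing with $C_0^\infty(\R^d\setminus\Gamma)$ functions, obtain \eqref{full_var_eq_ext_lemma} and the jump statement by Green's identities on $\Omega$ and $\Omega^+$ with the lifting argument for (ii), and derive (iii) via the Z-transform, Lemma~\ref{z_transform_lemma}, the matrix Calder\'on system of Proposition~\ref{proposition:calderon}, and the annihilation $\bdryprodp{\gamma^- X^n_*}{\mu_h}=0$. Your only deviations are cosmetic --- you prove (i) first and deduce (ii) from it, whereas the paper argues (ii) directly and treats (i) last --- and the caveats you flag ($x^0_*=0$ and exponential-in-$n$ bounds so the Z-transform converges) are exactly the hypotheses the paper uses implicitly, since in its application the construction sets $\fdustar{0}=0$ and step-wise stability gives the needed growth bound.
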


\begin{proof}[Proof of Lemma~\ref{equivalent_formulation_lemma}]
  We start with solutions $u^n_h$, $\lambda^n_h$, $U^n_h$ of \eqref{eq:fully_discrete_system_fem_bem_version_blf}.
  We construct a sequence $(\fdUstar{n},\fdustar{n})_{n\in \N}$ that satisfies the
  conditions of the previous lemma. To that end, we set $\fdustar{0}:=0$ and define the functions 
  $\fdUstar{n}$ and $\fdustar{n}$ inductively so that they satisfy 

\begin{subequations}
    \label{equiv_form_lemma_proof_1}
  \begin{align}
      \blfAFull{\fdUstar{n}}{V^*} 
    &= \ltwoprodfullp{\fdustar{n} \vecrhs}{V^*}  \quad \forall V^* \in \prodSpace{H^{*}_0}(Y_h), \\
    \traceJump{\fdUstar{n}} &= - \gamma^- \fdU{n}, \\
\fdustar{n+1}&:=R(\infty) \fdustar{n} + b^T A^{-1} \fdUstar{n}. 
    \end{align}
\end{subequations}
%
    To construct this, take $\xi_n$ a lifting of $\gamma^- \fdU{n}$ on the exterior and $0$ on the interior.
    Then we set $\fdUstar{n}:=X^n_* + \xi_n$, where $X^n_* \in \prodSpace{H^{*}_0(Y_h)}$ solves 
    \begin{align*}
      \blfAFull{X^n_*}{V^*} &= \ltwoprodfullp{\fdustar{n}  \vecrhs}{V^*} - \blfAFull{\xi_n}{V^*} & \forall V^* \in \prodSpace{H^{*}_0}(Y_h).
    \end{align*}
    The existence of the solutions is guaranteed by Lemma~\ref{lemma_solve_matrix_eqn} and the fact that the scalar problems are elliptic due
    to a non-vanishing imaginary part of the ``wave number.''
    We must show that $(\fdU{n},\fdUstar{n})$ solves \eqref{equivalent_formulation_lemma_eqn}.
    In view of (\ref{equiv_form_lemma_proof_1}), we may apply Lemma~\ref{extension_lemma} to $\fdUstar{n}$. 
    From (\ref{extension_lemma-20}) and $\traceJump{\fdUstar{n}}= - \gamma^- \fdU{n}$ we get 
    \begin{align*}
      \bdryprodp{V(\partial_t^k) \normalJump{\fdUstar{n}}}{\mu} -\bdryprodp{\left(1/2- K(\partial_t^k)\right) \gamma^- \fdU{n}}{\mu} &= 0
    \qquad\forall \mu \in \prodSpace{Y_h}. 
  \end{align*}
  This is the same equation as (\ref{eq:fully_discrete_system_fem_bem_version_blf-b}) for $-\lambda^n_h$. 
In order 
  to show $\normalJump{\fdUstar{n}} = -\lambda^n_h$ we use the  definition of $V(\partial_t^k)$ as the sum over 
  the history to arrive at 
  \begin{align*}
    \sum_{j=0}^{n}{ \bdryprodp{V^j \left(\normalJump{\fdUstar{n-j}} + \fdl{n-j}\right)}{\mu_h}}&=0
    \quad\forall \mu_h \in \prodSpace{Y_h}. 
  \end{align*}  
    Since both, $\lambda^n_h$ and $\normalJump{\fdUstar{n-j}}$ are in the discrete space $Y_h$,
    it is easy to see that an induction will yield 
  $\normalJump{\fdUstar{n}} = -\lambda^n_h$ as soon as we have asserted that 
$V^0$ is injective when viewed as  
    an operator $\prodSpace{Y_h} \to \prodSpace{Y_h}'$.
    We note that $V^0=V(B(0))$ with $B(0)$ defined in~\eqref{eq:def_Bz} since $V^0$ is the leading term in the
      Taylor series of $V(B(z))$ at $0$.
      By \cite[Proposition 16]{laliena_sayas_cq_scatt}, \cite{bamberger_duong_1,bamberger_duong_2},
      $V(s)$ satisfies the ellipticity estimate: 
      $
        \Re{\left(e^{\ii \operatorname{Arg}s}\bdryprod{\lambda}{V(s)\lambda} \right)} \geq \frac{\Re(s) \min(1,\Re{s})}{\abs{s}^2} \norm{\lambda}^2 _{-1/2},
      $      
    therefore the inverse operator $V^{-1}(s)$ exists as an operator between discrete spaces $Y_h' \to Y_h$. 
    From the composition property of the Riesz-Dunford calculus or by using the Jordan form, similar to the proof 
    of Lemma~\ref{lemma_solve_matrix_eqn}, this implies that $V(B(0)): \prodSpace{Y_h} \to \prodSpace{Y_h}'$ 
    is also invertible and in particular injective. 
    We conclude $\lambda^n_h=-\normalJump{\fdUstar{n}}$ for all $n \in \N$. 
  If we insert $\normalJump{\fdUstar{n}} = -\lambda^n_h$ into (\ref{extension_lemma-10}) 
  we get:
  \begin{align}
    \label{eq:equiv_form_lemma_proof_1-100}
    {-\partial_n^+\fdUstar{n}}{}&=  (-1/2+K^T(\partial_t^k)) \fdl{n} + W(\partial_t^k) \gamma^- \fdU{n}.
  \end{align}
%
We now claim that $(\fdU{n},\fdUstar{n})$ solves \eqref{equivalent_formulation_lemma_eqn}. 
  When evaluating $B\left((\fdU{n},\fdUstar{n}),(V_h,V^*)\right)$, we employ \eqref{full_var_eq_ext_lemma}
to write 
 \begin{equation} 
    \label{eq:equiv_form_lemma_proof_1-200}
\blfAFull{\fdUstar{n}}{V^*} = 
       k\,\bdryprodp{\partial^+_n{\fdUstar{n}}}{ \traceJump{V^*}}{}  + k\,\bdryprodp{\normalJump{\fdUstar{n}}}{\gamma^- V^*}{}
      + \ltwoprodfullp{u^n_* \vecrhs}{V^*}.
 \end{equation} 
  The second term of the  right-hand side of (\ref{eq:equiv_form_lemma_proof_1-200}) vanishes since 
$\normalJump{\fdUstar{n}}$ is in $\prodSpace{Y_h}$ and $\gamma^- V^*$ is in $\prodSpace{Y_h}^\circ$ by assumption.
We insert (\ref{eq:equiv_form_lemma_proof_1-100}) into the first term of the right-hand side of 
(\ref{eq:equiv_form_lemma_proof_1-200}) 
 \begin{equation} 
    \label{eq:equiv_form_lemma_proof_1-300}
\blfAFull{\fdUstar{n}}{V^*} = 
        k\,\bdryprodp{
     (-1/2+K^T(\partial_t^k)) \fdl{n} + W(\partial_t^k) \gamma^- \fdU{n}
}{ \traceJump{V^*}}{}  
      + \ltwoprodfullp{u^n_* \vecrhs}{V^*}
 \end{equation} 
and observe that this leads to \eqref{equivalent_formulation_lemma_eqn} in view of 
(\ref{eq:fully_discrete_system_fem_bem_version_blf}).
  It remains to be shown that the functions $\fdUstar{n}$, $\fdustar{n}$ that are obtained by the 
  convolution quadrature post-processing of $(\fdU{j})$ and $(\lambda^j_h)$ defined in 
  (\ref{definition_ustar_explicit}) coincide with solution components $\fdUstar{n}$, $\fdustar{n}$ 
  as defined above. 
We consider the Z-transform of the function $\fdUstar{n}$ 
  defined by (\ref{definition_ustar_explicit}). It satisfies the differential equation 
  \eqref{z_transform_lemma_result},
  and also has the same jumps across $\Gamma$. Uniqueness of the Helmholtz problem then gives the result.

In order to see that solutions of (\ref{equivalent_formulation_lemma_eqn}) solve problem 
(\ref{eq:fully_discrete_system_fem_bem_version_blf}),  we select test functions
  $V_h:=0$ and $V^* \in \prodSpace{H^*_0}(Y_h)$ (as defined in Lemma~\ref{extension_lemma}) and observe 
that (\ref{equivalent_formulation_lemma_eqn}) simplifies to
  \begin{align*}
    \blfAFull{\fdUstar{n}}{V^*}&=\ltwoprodfullp{\fdustar{n}  \vecrhs}{V^*}.
  \end{align*}
  Hence, we are in the setting of Lemma~\ref{extension_lemma}. 
  We set $\lambda^n_h:=- \normalJump{\fdUstar{n}}$.
  If we take any pair $(V_h,V^*) \in \prodSpace{\hat{H}(X_h,Y_h)}$, and again argue as above, we arrive at 
    (\ref{eq:equiv_form_lemma_proof_1-300}). Using (\ref{equivalent_formulation_lemma_eqn}) one then sees
  that $\fdU{n}$ and $\lambda_h^n$ solve 
  \eqref{eq:fully_discrete_system_fem_bem_version_blf-a}. The equation
  \eqref{eq:fully_discrete_system_fem_bem_version_blf-b} follows from (\ref{extension_lemma-20}). 

  In the case $X_h=H^1(\Omega)$ and $Y_h=H^{-1/2}(\Gamma)$, the condition $\gamma^- \fdUstar{n} \in \prodSpace{Y_h}^{\circ}$ implies
    $\gamma^- \fdUstar{n} = 0$. Since $\fdustar{0}|_{\Omega}=0$ by definition, we get by induction that $\fdUstar{n}|_{\Omega}=0$ for all $n \in \N$,
    since $\fdUstar{n}|_{\Omega}$ solves the homogeneous problem with zero boundary conditions.
    With this knowledge, it is easy to see that~\eqref{equivalent_formulation_lemma_eqn}
    is just the weak formulation of~\eqref{rk_approx_whole_space_2}. 
\end{proof}

\begin{corollary}
\label{corollary:uniqueness}
  The sequence of fully discrete problems is uniquely solvable for any choice of closed 
subspaces $X_h\subset H^1(\Omega)$, $Y_h\subset H^{1/2}(\Gamma)$  
  and any step size $k>0$.
  Choosing $X_h=\hpint{1}$, $Y_h=\hpbdry{-1/2}$ this also shows uniqueness for the semi-discrete problem in Theorem~\ref{thm:semidiscrete_system_fem_bem_version}.
  \begin{proof}
    Since the fully discrete problem is equivalent to (\ref{equivalent_formulation_lemma_eqn}), it suffices to show existence and uniqueness there.
    This is covered by the statement in Lemma~\ref{lemma_solve_matrix_eqn}.
  \end{proof}
\end{corollary}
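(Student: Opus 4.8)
The plan is to reduce the claim to the well-posedness of the single variational problem (\ref{equivalent_formulation_lemma_eqn}) and to solve the latter by Lax--Milgram. By Lemma~\ref{equivalent_formulation_lemma} the fully discrete problem is equivalent to finding $(\fdU{n},\fdUstar{n}) \in \prodSpace{\hat H(X_h,Y_h)}$ solving (\ref{equivalent_formulation_lemma_eqn}), where the right-hand side at step $n$ depends only on the already-computed $\fdu{n}$ and $\fdustar{n}$ (with $\fdu{0}$ the given initial datum and $\fdustar{0}=0$). Hence the whole sequence is obtained by an induction on $n$ in which each step is a \emph{linear} problem governed by one and the same sesquilinear form $B$ from (\ref{definition_b}) on the fixed Hilbert space $\prodSpace{\hat H(X_h,Y_h)}$. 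It therefore suffices to show that $B$ induces an isomorphism $\prodSpace{\hat H(X_h,Y_h)} \to \big(\prodSpace{\hat H(X_h,Y_h)}\big)'$; this is the content that I would isolate as Lemma~\ref{lemma_solve_matrix_eqn}.

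The real work is the coercivity of $B$. The form splits into $\blfAInt{\cdot}{\cdot}$ and $\blfAFull{\cdot}{\cdot}$, each of the generic shape $(-\ii A^{-1}\,\cdot,\cdot) + k(\nabla\,\cdot,\nabla\,\cdot) + k(g\,\cdot,\cdot)$ with $g$ real, so the only matrix-valued ingredient is the zeroth-order coefficient $-\ii A^{-1}$. I would remove it by the constant linear change of unknown $U \mapsto S^{-1}U$, where $A^{-1}=SJS^{-1}$ brings $A^{-1}$ to Jordan form $J$; since the differential part acts component-wise and commutes with $S$, the transformed system becomes block-\emph{triangular}, its diagonal blocks being the scalar problems attached to the eigenvalues $\mu$ of $A^{-1}$, and the strictly-triangular couplings are lower order and removed by back-substitution. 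By Lemma~\ref{rk_spectrum_a} every such $\mu$ satisfies $\Re\mu>0$, which is the decisive sign: testing a scalar block against $V=U$ and using $\Im(-\ii\mu)=-\Re\mu$ gives imaginary part $-\Re(\mu)\,\ltwonorm{U}^2$, strictly negative for $U\neq0$. This already yields \emph{injectivity} (a homogeneous solution forces $\ltwonorm{U}=0$, whence the real part forces $\nabla U=0$), hence uniqueness. For \emph{existence} I would multiply the scalar form by a unimodular factor $e^{\ii\theta}$, chosen so that its real part simultaneously controls $k\ltwonorm{\nabla U}^2$ and $\Re(\mu)\,\ltwonorm{U}^2$; the sign-indefinite potential term $k\int g\abs{U}^2$ is then absorbed using the $L^2$-coercivity supplied by $\Re\mu>0$, which produces the desired coercivity estimate in the $\prodSpace{\pairhone}$-norm.

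It remains to pass from the whole space to the constrained space and to recover the original unknowns. The set $\hat H(X_h,Y_h)$ is a \emph{closed} subspace of $X_h\times\hpfull{1}$, being cut out by the continuous linear constraints $\traceJump{v^*}=-\gamma^- v$ and $\gamma^- v^*\in Y_h^\circ$; hence boundedness and coercivity of $B$ are inherited on $\prodSpace{\hat H(X_h,Y_h)}$ and Lax--Milgram applies there without change, giving existence and uniqueness of (\ref{equivalent_formulation_lemma_eqn}). The identification with the fully discrete unknowns is then exactly the equivalence established in Lemma~\ref{equivalent_formulation_lemma}: $\fdU{n}$ is read off directly, $\fdl{n}=-\normalJump{\fdUstar{n}}$ is pinned down through the injectivity of the leading convolution weight $V^0=V(B(0))$ proved there, and $\fdu{n+1}$ is explicit. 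Choosing $X_h=\hpint{1}$ and $Y_h=\hpbdry{-1/2}$ specializes this to the semi-discrete problem of Theorem~\ref{thm:semidiscrete_system_fem_bem_version}. The step I expect to be the main obstacle is precisely the coercivity of $B$: the interplay of the \emph{non-normal} coefficient $-\ii A^{-1}$ (handled by the Jordan reduction and triangular back-substitution) with the \emph{sign-indefinite} potential (handled by the $L^2$-coercivity that $\Re\sigma(A^{-1})>0$ provides), together with checking that the rotation angle $\theta$ can be chosen uniformly over the finitely many eigenvalues of $A^{-1}$.
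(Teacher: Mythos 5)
Your proposal is correct and follows essentially the same route as the paper: reduce via the equivalence of Lemma~\ref{equivalent_formulation_lemma} to the variational problem (\ref{equivalent_formulation_lemma_eqn}), then solve it by the Jordan-form reduction of the Runge--Kutta coefficient matrix to a triangular family of scalar problems (this is precisely Lemma~\ref{lemma_solve_matrix_eqn}), whose well-posedness follows because $\Re\,\sigma(A^{-1})>0$ makes the scalar ``wave numbers'' have non-vanishing imaginary part. Your explicit rotation argument for the scalar blocks merely fills in the detail the paper states in one line (``the scalar problems are elliptic due to a non-vanishing imaginary part of the wave number''), including the recovery of $\fdl{n}$ through the injectivity of $V^0$, exactly as in Lemma~\ref{equivalent_formulation_lemma}.
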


\section{Abstract analysis}
\label{sect:abstract_analysis}
In this section, we analyze the time stepping of Lemma~\ref{equivalent_formulation_lemma} in an abstract setting.

\begin{assumption}
\label{assumption:abstract_setting}
Let $H_0$, $H_1$ be Hilbert spaces with $H_0 \supseteq H_1$ continuously and densely embedded, and let
$H_h \subseteq H_1$ be a closed subspace. We assume $H_h$ is equipped with the $H_1$ inner product,
and we will explicitly state when we equip it instead with the $H_0$ inner product.

Assume we are given  a sesquilinear form $\mathbf{H}: H_1 \times H_1 \to \C$
that is bounded and Hermitian, i.e., $\mathbf{H}(u,v)=\overline{\mathbf{H}(v,u)}$.
Also assume that there exists a constant $\alpha >0$ 
such that the stabilized sesquilinear form
\begin{align}
  \label{eq:def:abstract_blfHStab}
  \blfHStab{u}{v}:= \mathbf{H}(u,v) + \alpha \innerprod{H_0}{u}{v}
\end{align}
satisfies an inf-sup condition
\begin{align}
  \label{eq:abstract_setting_infsup}
  \inf_{u \in H_h \setminus \{0\}} \sup_{v \in H_h \setminus \{0\}}{ \frac{|\blfHStab{u}{v}|}{\norm{u}_{H_1} \norm{v}_{H_1}}} & \geq \beta_{\widetilde{\mathbf{H}}}.
\end{align}
We will write $\matH(\cdot,\cdot)$ and $\blfHStabP{\cdot}{\cdot}$ for the corresponding sum sesquilinear forms on
$\prodSpace{H_1} \times \prodSpace{H_1}$.

Define the sesquilinear form
\begin{align}
\label{eq:def:abstact_blf_B}
B(U,V)&:=  - \innerprod{\prodOp{H_0}}{\ii A^{-1}U}{V} + k\matH(U,V) \quad \forall U,V \in \prodSpace{H_1}.
\end{align}

We consider solutions $X^n_h \in \prodSpace{H_h}$, $x_h^n \in H_h$ of:
\begin{subequations}
  \label{eq:def:abstract_X_n}
  \begin{align}
    B(X^n_h,V_h) &= \innerprod{\prodOp{H_0}}{x_h^n \, \vecrhs}{V_h} + \innerprod{\prodOp{H_0}}{F_n}{V_h} \qquad \forall V_h \in \prodSpace{H_h},\\
    x^{n+1}_h &= R(\infty) x_h^n + \vect{b}^T A^{-1} X^n_h,
  \end{align}
  \label{subeq:lemma:discrete_stability_problem}    
\end{subequations}
for some given right-hand sides $F_n \in \prodSpace{H_0}$ and initial condition $x^0_h \in H_h$.
\end{assumption}

We will need the well-known spectral representation theorem for bounded, self-adjoint operators. We will use it in the following
``multiplication operator'' form:
\begin{proposition}[{\cite[Satz VII.1.21, page 335]{werner_fana}}, {\cite[Theorem VII.3, page 227]{reed_simon_1}}]
\label{prop:spectral_theorem}
  Let $T$ be a bounded, self-adjoint operator on a separable Hilbert space $H$. Then there exists a \emph{finite} measure space
  $\langle\mathcal{O},\mu \rangle$, a bounded measurable function $F: \mathcal{O} \to \R$, and a unitary map $\mathcal{U}: H \to L^2(\mathcal{O},d\mu)$, such that
  \begin{align*}
    ( \mathcal{U} T \mathcal{U}^{-1} f) (z) &= F(z) f(z) \quad \forall z \in \mathcal{O}.
  \end{align*}
\end{proposition}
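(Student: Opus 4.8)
The plan is to reduce the abstract statement to the well-understood model case of a cyclic operator and then patch these models together using separability. I would proceed in three stages: (i) construct the continuous functional calculus $f \mapsto f(T)$ for $f \in C(\sigma(T))$; (ii) diagonalize $T$ on a single cyclic subspace via a scalar spectral measure and the Riesz representation theorem; and (iii) decompose $H$ into countably many cyclic pieces and assemble the global multiplication model, taking care to keep the total measure finite.

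First I would establish the functional calculus. Since $T$ is bounded and self-adjoint, its spectrum $\sigma(T)$ is a compact subset of $[-\norm{T},\norm{T}] \subseteq \R$. For a polynomial $p$ one has the norm identity $\norm{p(T)} = \sup_{\lambda \in \sigma(T)} \abs{p(\lambda)}$, which follows from the $C^*$-identity together with the spectral mapping theorem for polynomials. By the Weierstrass approximation theorem this lets me extend $p \mapsto p(T)$ to an isometric $*$-homomorphism $C(\sigma(T)) \to \mathcal{B}(H)$, $f \mapsto f(T)$. Next, fix $\psi \in H$ and let $H_\psi := \overline{\{ f(T)\psi : f \in C(\sigma(T))\}}$ be the cyclic subspace it generates. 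The map $f \mapsto \langle \psi, f(T)\psi\rangle$ is a positive linear functional on $C(\sigma(T))$, so by the Riesz representation theorem there is a unique finite Borel measure $\mu_\psi$ on $\sigma(T)$ with $\langle \psi, f(T)\psi\rangle = \int_{\sigma(T)} f\, d\mu_\psi$ and total mass $\mu_\psi(\sigma(T)) = \norm{\psi}^2$. A direct computation shows that $f(T)\psi \mapsto f$ extends to a unitary $U_\psi \colon H_\psi \to L^2(\sigma(T),\mu_\psi)$, and under $U_\psi$ the restriction $T|_{H_\psi}$ becomes multiplication by the coordinate function $\lambda \mapsto \lambda$.

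Finally, using that $H$ is separable, a Zorn's lemma / Gram--Schmidt argument produces an at most countable orthogonal decomposition $H = \bigoplus_n H_{\psi_n}$ into cyclic subspaces. I would rescale the generators so that $\sum_n \norm{\psi_n}^2 < \infty$; then the disjoint union $\mathcal{O} := \bigsqcup_n \sigma(T)$ carries the measure $\mu := \sum_n \mu_{\psi_n}$ of finite total mass $\sum_n \norm{\psi_n}^2$, which is exactly where the \emph{finiteness} of the measure space is secured. Setting $\mathcal{U} := \bigoplus_n U_{\psi_n}$ and letting $F$ be the coordinate function $\lambda$ on each copy (bounded because $\sigma(T)$ is bounded) yields $(\mathcal{U} T \mathcal{U}^{-1} g)(z) = F(z) g(z)$, as claimed.

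The main obstacle is the bookkeeping in the last stage: guaranteeing that the cyclic decomposition is genuinely countable (this is where separability is essential) and, simultaneously, arranging the normalization of the generators so that the assembled measure is finite rather than merely $\sigma$-finite. The functional-calculus and single-cyclic-subspace steps are standard, but combining them while preserving both finiteness of $\mu$ and boundedness of $F$ is the delicate point; since this is a classical result, in the paper itself I would simply cite the references rather than reproduce this argument.
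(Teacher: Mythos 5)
The paper offers no proof of this proposition at all: it is quoted directly from the cited references (Werner; Reed--Simon, Theorem~VII.3), and your blind argument is precisely the classical proof given there --- continuous functional calculus via Weierstrass approximation, diagonalization on a cyclic subspace through the Riesz representation theorem, and a countable cyclic decomposition assembled into a multiplication model. Your sketch is correct, and it correctly isolates the two points that need care (separability to make the decomposition countable, and the rescaling $\sum_n \norm{\psi_n}^2 < \infty$ that makes $\mu$ finite rather than merely $\sigma$-finite), so it matches the referenced proof essentially step for step; like the authors, you would in practice simply cite the result.
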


We would like to keep the analysis as general as possible in order to set the stage for problems other 
than the Schr\"odinger equation. For this reason, we required in 
Assumption~\ref{assumption:abstract_setting} merely inf-sup stability and not ellipticity, although the 
Schr\"odinger Hamiltonian considered here is in fact elliptic. In order to track where the stronger
condition of ellipticity of $\blfHStab{\cdot}{\cdot}$ is needed instead of merely inf-sup stability, we mark the corresponding estimates 
with $(*)$. 
%

In Lemma~\ref{lemma_discrete_stability}, we will need that $H_h$ is able to represent its dual
space using the $\pairltwo$ inner product. That this is indeed the case is the subject of the following lemma:
\begin{lemma}
\label{lemma:density_of_H_h}
  The set $M:=\{ \innerprod{H_0}{\cdot}{u_h} : u_h \in H_h \}$ is dense in $\left(H_h,\norm{\cdot}_{H_1}\right)'$
\end{lemma}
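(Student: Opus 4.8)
The plan is to exploit that $\mathcal{H} := (H_h, \norm{\cdot}_{H_1})$ is itself a Hilbert space, so that $M$ is a linear subspace of the Hilbert space $\mathcal{H}'$, and density of a subspace of a reflexive Banach space can be checked by showing that its annihilator in the bidual is trivial. First I would record that $M$ really does sit inside $\mathcal{H}'$: for fixed $u_h \in H_h$ and any $v \in H_h$, the continuity of the embedding $H_1 \hookrightarrow H_0$ gives $\abs{\innerprod{H_0}{v}{u_h}} \leq \norm{v}_{H_0}\norm{u_h}_{H_0} \lesssim \norm{v}_{H_1}\norm{u_h}_{H_0}$, so $v \mapsto \innerprod{H_0}{v}{u_h}$ is a bounded functional on $\mathcal{H}$; clearly $M$ is a linear subspace.

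By the Hahn--Banach theorem, $M$ is dense in $\mathcal{H}'$ if and only if every $\Phi \in (\mathcal{H}')'$ that vanishes on $M$ vanishes identically. Since $\mathcal{H}$ is a Hilbert space it is reflexive, so the canonical embedding $\mathcal{H} \to (\mathcal{H}')'$ is onto: any such $\Phi$ has the form $\Phi(\ell) = \ell(v)$ for a unique $v \in H_h$. The key step is then to test the vanishing condition against the generators of $M$ themselves: $\Phi$ vanishing on $M$ means $\innerprod{H_0}{v}{u_h} = 0$ for every $u_h \in H_h$, and choosing $u_h = v$ yields $\norm{v}_{H_0}^2 = 0$. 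Because the embedding $H_1 \hookrightarrow H_0$ is injective and $v \in H_h \subseteq H_1$, this forces $v = 0$ in $H_h$, hence $\Phi = 0$, and density follows.

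The argument is essentially soft; the only points that require care are the three dualities in play (that of $H_0$, that of $\mathcal{H}$, and the bidual), and the fact that the $H_0$ inner product --- the natural pairing on the larger space --- remains a bounded functional when restricted to the finer-normed space $\mathcal{H}$. I expect the main (very mild) obstacle to be bookkeeping: keeping straight that the embedding $H_1 \hookrightarrow H_0$ is injective, so that the conclusion $\norm{v}_{H_0} = 0$ can be transported back to $v = 0$ in $H_h$. An entirely equivalent route avoids the bidual by working directly in the Hilbert space $\mathcal{H}'$: represent each generator of $M$ via Riesz as $\innerprod{H_1}{\cdot}{w_{u_h}}$ and show the orthogonal complement $M^\perp$ in $\mathcal{H}'$ is trivial by the same test $u_h = v$. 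I would present whichever version is shorter after fixing the conjugation conventions in the Riesz identification.
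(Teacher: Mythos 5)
Your proof is correct and follows essentially the same route as the paper: both arguments show density by proving that the annihilator of $M$ in the bidual is trivial, use reflexivity of $H_h$ to identify the annihilating element with some $v \in H_h$, and conclude $v = 0$ by testing with $u_h = v$ in the $H_0$ inner product. The extra details you supply (boundedness of the functionals via the embedding $H_1 \hookrightarrow H_0$, and the injectivity of that embedding) are sound and merely make explicit what the paper leaves implicit.
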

\begin{proof}
  We show that the annihilator $M^\circ = \{0\}$. Let $x \in M^{\circ} \subset (H_h')'$. Since $H_h$ is reflexive, we
  can assume $x \in H_h$.
  This means that $f(x) = 0 $ for all $f \in M$, or $0=\innerprod{H_0}{x}{u_h} \; \forall u_h \in H_h$. 
  Setting $u_h=x$ shows $x=0$.  
\end{proof}

The following lemma is the main ingredient of our stability and convergence proofs. It can be seen as a version 
of a theorem by von Neumann (see \cite[Corollary 11.3]{hairer_wanner_2}) about Runge-Kutta stability, 
adapted to our setting.

\begin{lemma}[Discrete stability]
\label{lemma_discrete_stability}  
  Let Assumption~\ref{assumption:abstract_setting} hold. 
  Then, without any conditions on $k$ or the space $H_h$, we have that the sequence of solutions to \eqref{subeq:lemma:discrete_stability_problem} is non-expansive, i.e.,
  for all $n \in \N$:
  \begin{align}
    \label{eq:lemma_discrete_stability_l2}
    \norm{x_h^n}_{H_0} &\leq \norm{x^0_h}_{H_0} + C\sum_{j=0}^{n-1}{\norm{F_j}_{\prodOp{H_0}}}.
  \end{align}
  If we also assume that $\blfHStab{\cdot}{\cdot}$ is $H_1$-elliptic, with $\beta_{\widetilde{\mathbf{H}}}$ as the coercivity constant,
  then there exists a constant $C>0$ depending only on $\beta_{\widetilde{\mathbf{H}}}$ and the Runge-Kutta method 
  such that
  \begin{align}
    \label{eq:lemma_discrete_stability_h1_all_f}
    \norm{x_h^n}_{H_1} &\stackrel{(*)}{\leq} C \left(\norm{x^0_h}_{H_1} +
                         \sum_{j=0}^{n-1}{\inf_{W_h \in \prodOp{H_h}}{\left( \norm{W_h}_{\prodOp{H_1}}
                               + k^{-1/2} \norm{F_j - W_h}_{\prodOp{H_0}}\right)}}\right).
  \end{align}      
  For discrete right-hand sides $F_h \in \prodSpace{H_h}$ the following, stronger estimate is valid: 
  \begin{align}
    \label{eq:lemma_discrete_stability_h1}
    \norm{x_h^n}_{H_1} &\stackrel{(*)}{\leq} C \left(\norm{x^0_h}_{H_1} + \sum_{j=0}^{n-1}{\norm{F_j}_{\prodOp{H_1}}}\right).
  \end{align}
  In the case that the RK-method satisfies $\abs{R(\ii t)}=1$ for all $t\in \R$ and $F_n=0\; \forall n \in \N$ we 
  get conservation of the $H_0$-norm, i.e., 
  \begin{align*}
    \norm{x_h^n}_{H_0} &= \norm{x^0_h}_{H_0} \qquad \forall n \in \N.
  \end{align*}
  Under the stricter ellipticity assumption on $\widetilde{\mathbf{H}}(\cdot,\cdot)$,  we also get ``conservation of energy'':
  \begin{align*}
    \mathbf{H}(x_h^n,x_h^n)&\stackrel{(*)}{=}\mathbf{H}(x^0_h,x^0_h).
  \end{align*}
\end{lemma}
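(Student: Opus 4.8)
The plan is to establish all the assertions at once by diagonalizing the discrete Hamiltonian with the spectral theorem (Proposition~\ref{prop:spectral_theorem}) and reducing the Runge--Kutta recursion to a scalar one, on which A-stability acts directly. First I would equip $H_h$ with the $\innerprod{H_0}{\cdot}{\cdot}$ inner product. Since the form $\mathbf{H}(\cdot,\cdot)$ is bounded and Hermitian, it induces a bounded self-adjoint operator $\mathcal{H}_h$ on $(H_h,\innerprod{H_0}{\cdot}{\cdot})$ through $\innerprod{H_0}{\mathcal{H}_h u}{v} = \mathbf{H}(u,v)$ (boundedness being immediate when $H_h$ is finite dimensional, as in the applications). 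Proposition~\ref{prop:spectral_theorem} then provides a finite measure space $\langle \mathcal{O},\mu\rangle$, a bounded real function $F$, and a unitary $\mathcal{U}: H_h \to L^2(\mathcal{O},d\mu)$ with $\mathcal{U}\mathcal{H}_h\mathcal{U}^{-1}$ equal to multiplication by $F$. Writing $P_h$ for the $H_0$-orthogonal projection onto $H_h$ and $\tilde v := \mathcal{U}v$, the stage equation reads $(-\ii A^{-1} + k\,\prodOp{\mathcal{H}_h})X^n_h = x^n_h\vecrhs + \prodOp{P_h}F_n$, so applying $\matU$ componentwise gives, for $\mu$-a.e.\ $z$, the finite system $(-\ii A^{-1} + kF(z))\tilde{X}^n(z) = \tilde{x}^n(z)\vecrhs + \tilde{G}_n(z)$ with $\tilde{G}_n := \matU\,\prodOp{P_h}F_n$.

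Next I would make the scalar recursion explicit. Multiplying by $\ii A$ and using $\vecrhs = -\ii A^{-1}\ones$ yields $(I - \ii kF(z)A)\tilde{X}^n(z) = \tilde{x}^n(z)\ones + \ii A\,\tilde{G}_n(z)$; combining this with the update $\tilde{x}^{n+1} = R(\infty)\tilde{x}^n + b^T A^{-1}\tilde{X}^n$ and the representation \eqref{def_rk_stability_fkt2} gives
\begin{align*}
  \tilde{x}^{n+1}(z) = R(\ii kF(z))\,\tilde{x}^n(z) + q(\ii kF(z))\,\tilde{G}_n(z), \quad\text{with } q(w) := \ii b^T(I - wA)^{-1}.
\end{align*}
Because $F(z)$ is real and $k>0$, the argument $\ii kF(z)$ is purely imaginary, so A-stability gives $\abs{R(\ii kF(z))}\le 1$, while invertibility of $A$ makes $\sup_{s\in\R}\abs{q(\ii s)}=:C_q$ finite (the resolvent decays like $\abs{s}^{-1}$). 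Unrolling the recursion and using that $\matU$ is unitary, so $\norm{x^n_h}_{H_0}=\norm{\tilde{x}^n}_{L^2(\mathcal{O},d\mu)}$, together with the triangle inequality, yields \eqref{eq:lemma_discrete_stability_l2}. The conservation statement is then immediate: when $F_n\equiv 0$ and $\abs{R(\ii t)}=1$ for all $t$, the pointwise modulus $\abs{\tilde{x}^n(z)}=\abs{R(\ii kF(z))}^n\abs{\tilde{x}^0(z)}=\abs{\tilde{x}^0(z)}$ is preserved.

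For the $H_1$ and energy statements I would use that, under ellipticity, $\blfHStab{u}{u}=\mathbf{H}(u,u)+\alpha\norm{u}_{H_0}^2$ is equivalent to $\norm{u}_{H_1}^2$ and, in the spectral picture, equals $\int_{\mathcal{O}}(F(z)+\alpha)\abs{\tilde u(z)}^2\,d\mu$ with weight $\tilde F:=F+\alpha\ge 0$. The homogeneous bound follows from $\abs{R}\le 1$ weighted by $\tilde F$, and the discrete-right-hand-side estimate \eqref{eq:lemma_discrete_stability_h1} follows similarly using $\tilde F(z)\abs{q(\ii kF(z))}^2\le C_q^2\,\tilde F(z)$ and $\int \tilde F\abs{\tilde{G}_j}^2=\blfHStabP{F_j}{F_j}\lesssim\norm{F_j}_{\prodOp{H_1}}^2$. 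The general estimate \eqref{eq:lemma_discrete_stability_h1_all_f} then comes from splitting $F_j=W_h+(F_j-W_h)$ with $W_h\in\prodOp{H_h}$: the $W_h$-increment is bounded as in the discrete case, and the remainder through the weighted bound $\sup_z\tilde F(z)\abs{q(\ii kF(z))}^2\lesssim k^{-1}$, which follows from $\abs{q(\ii s)}\lesssim(1+\abs{s})^{-1}$ and $\tilde F(z)\le\abs{s}/k+\alpha$ with $s=kF(z)$; its square root is the source of the $k^{-1/2}$ factor. Finally, energy conservation follows since $\mathbf{H}(x^n_h,x^n_h)=\int F(z)\abs{\tilde{x}^n(z)}^2\,d\mu$ and $\abs{\tilde{x}^n(z)}$ is preserved pointwise whenever $\abs{R(\ii t)}=1$.

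The main obstacle I anticipate is twofold. First, making the reduction rigorous requires $\mathcal{H}_h$ to be a genuine bounded self-adjoint operator so that Proposition~\ref{prop:spectral_theorem} applies; this is transparent for finite-dimensional $H_h$ but needs care in the abstract statement, and it is precisely where separability and boundedness are used. Second, the sharp weighted bound $\tilde F(z)\abs{q(\ii kF(z))}^2\lesssim k^{-1}$ — which governs how an $H_0$-perturbation of the data feeds into the $H_1$-norm and produces the characteristic $k^{-1/2}$ scaling of \eqref{eq:lemma_discrete_stability_h1_all_f} — is the one place where the asymptotics of $(I-\ii sA)^{-1}$ and the A-stability of $R$ must be combined quantitatively.
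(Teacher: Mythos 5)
Your overall mechanism --- diagonalize, reduce the Runge--Kutta step to the scalar recursion $\tilde x^{n+1}(z) = R(\pm\ii k F(z))\,\tilde x^n(z) + q\,\tilde G_n(z)$, and read off all assertions from $\abs{R}\le 1$ on the imaginary axis together with weighted $L^2(\mu)$ norms --- is exactly the mechanism of the paper's proof, and your subsequent computations (the uniform bound on $q$, the weight $\tilde F = F+\alpha$ for the $H_1$ statements, the splitting $F_j = W_h + (F_j - W_h)$ with the $k^{-1/2}$ weighted bound) reproduce in the multiplication picture precisely the mapping properties of the operators $R_T$ and $S_T$ that the paper isolates in Lemma~\ref{lemma:3.5}; the sign discrepancy in your stage equation ($I-\ii kF(z)A$ versus $I+\ii kF(z)A$) is inconsequential since the argument of $R$ stays purely imaginary. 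However, there is a genuine gap at the very first step: you apply Proposition~\ref{prop:spectral_theorem} to the operator $\mathcal{H}_h$ induced by $\mathbf{H}(\cdot,\cdot)$ on $\left(H_h,\innerprod{H_0}{\cdot}{\cdot}\right)$ and justify its boundedness by finite dimensionality of $H_h$, ``as in the applications.'' In this paper $H_h$ is \emph{not} finite dimensional in the relevant applications: the constrained space $\hat H(X_h,Y_h)$ of Lemma~\ref{equivalent_formulation_lemma} has as its second component all of $H^1(\R^d\setminus\Gamma)$; Corollary~\ref{corollary:uniqueness} invokes the lemma with $X_h = H^1(\Omega)$, $Y_h = H^{-1/2}(\Gamma)$; and Lemma~\ref{sd_apriori} uses \eqref{eq:lemma_discrete_stability_l2} to conclude $\norm{R(\ii k\mathbf{H})^n}_{L^2(\R^d)\to L^2(\R^d)}\le 1$ for the genuinely unbounded Schr\"odinger Hamiltonian --- indeed the statement explicitly promises ``without any conditions on $k$ or the space $H_h$.'' For $u\in H_h$ the functional $v\mapsto \mathbf{H}(u,v)$ is bounded only with respect to $\norm{\cdot}_{H_1}$, so an $H_0$-Riesz representative $\mathcal{H}_h u\in H_h$ need not exist at all; the bounded spectral theorem is therefore unavailable, and escaping to the unbounded spectral theorem would require establishing self-adjointness (not mere symmetry) of $\mathcal{H}_h$, which you do not address.

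The paper's proof is built to circumvent exactly this obstruction: instead of diagonalizing the Hamiltonian, it diagonalizes the \emph{bounded} operator $T:H_0\to H_h$ defined by $\blfHStab{Tw}{y}=\innerprod{H_0}{w}{y}$ for all $y\in H_h$ --- a discrete inverse of the stabilized Hamiltonian, well defined from the inf-sup condition \eqref{eq:abstract_setting_infsup} alone (note that the $H_0$ estimate \eqref{eq:lemma_discrete_stability_l2} is claimed without ellipticity, whereas your construction needs more structure even to define $\mathcal{H}_h$). This $T$ is bounded and $H_0$-self-adjoint, Proposition~\ref{prop:spectral_theorem} applies to it, and the step operator becomes multiplication by $R\left(\ii k\left(\alpha - 1/f(z)\right)\right)$, with the convention $h(z)=\infty$ where $f(z)=0$; from there the argument proceeds essentially as you outline. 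So your proposal is correct in spirit and would be complete for finite-dimensional $H_h$, but as written it does not prove the lemma in the generality in which the paper states and uses it; replacing your $\mathcal{H}_h$ by the paper's operator $T$ repairs the proof.
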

Before we can prove this statement, we need the following reformulation of a Runge-Kutta step:
\begin{lemma}
\label{lemma:3.5}
  Let $H_0$, $H_1$, $B$, $\widetilde{\mathbf{H}}$, $X_h^n$, $x_h^n$, $F_n$ be as in Assumption~\ref{assumption:abstract_setting}. 
The sequence $x_h^n$ solves the following equation:
  \begin{align}
    \label{eq:proof_rk_step_as_operators}
    \innerprod{H_0}{x^{n+1}_h}{\varphi}&=\innerprod{H_0}{ R_T x_{n}}{\varphi} + \innerprod{H_0}{ S_T F_n}{ \varphi}
\qquad \forall \varphi \in H_h, 
  \end{align}
   where $R_T: H_0 \to H_0$, $S_T: \prodSpace{H_0}  \to H_0$ are bounded linear operators 
     with $\operatorname{range}{R_T} \subseteq H_h$ and $\operatorname{range}{S_T} \subseteq H_h$. The operators satisfy the
    following bounds:
  
  \begin{align}
    \label{eq:operator_bounds_ST}
    \norm{S_T}_{\prodSpace{H_0} \to {H_0} } &\leq C,&   \norm{S_T}_{\prodSpace{H_h} \to H_h }  & \stackrel{(*)}{\leq} C,
    & \norm{S_T}_{\prodSpace{H_0} \to {H_1}} & \stackrel{(*)}{\leq} C k^{-1/2},  \\
      \norm{R_T}_{H_0 \to H_0 } &\leq C,&   \norm{R_T}_{H_h \to H_h }  & \stackrel{(*)}{\leq} C,
      & \norm{R_T}_{H_0 \to H_1} & \stackrel{(*)}{\leq} C k^{-1/2},
  \end{align}
  with constants that  depend only on the Runge-Kutta method and $\widetilde{\mathbf{H}}$, but not on $k$ or $H_h$. 

  The operator $R_T$ can be written as 
  \begin{align*}
    R_T u:= R(\infty) u - b^T A^{-1} \left( \left( \ii A^{-1} +  k \alpha I \right)\prodOp{T} - k I \right)^{-1} \prodOp{T} \vecrhs \;u,
  \end{align*}
  where $T$ is a self-adjoint, bounded operator on $H_0$ and bounded on $H_h$.
  If we assume $\blfHStab{\cdot}{\cdot}$ to be elliptic, 
  then $T$ is also self-adjoint on $H_h$ when equipped with the equivalent $\blfHStab{\cdot}{\cdot}$ inner product.
\end{lemma}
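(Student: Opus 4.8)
The plan is to eliminate the stage vector $X_h^n$ from \eqref{subeq:lemma:discrete_stability_problem} and to recognise the resulting one-step map as a function of a single self-adjoint operator, so that the spectral theorem (Proposition~\ref{prop:spectral_theorem}) reduces every bound to an elementary scalar estimate. First I would introduce the discrete solution operator $T$ of $\widetilde{\mathbf H}$: for $f \in H_0$ let $Tf \in H_h$ be determined by $\blfHStab{Tf}{v} = \innerprod{H_0}{f}{v}$ for all $v \in H_h$, which is well defined and bounded on $H_0$ by the inf-sup condition \eqref{eq:abstract_setting_infsup} and the Hermitian symmetry of $\widetilde{\mathbf H}$. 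That symmetry gives $\innerprod{H_0}{Tf}{g} = \innerprod{H_0}{f}{Tg}$, i.e.\ $T$ is self-adjoint on $H_0$; under ellipticity, $\blfHStab{Tf}{g} = \innerprod{H_0}{f}{g} = \blfHStab{f}{Tg}$ shows that $T$ is self-adjoint also for the $\blfHStab{\cdot}{\cdot}$ inner product, which is equivalent to the $H_1$ inner product on $H_h$. Since $\mathbf{H}(\cdot,\cdot) = \blfHStab{\cdot}{\cdot} - \alpha\innerprod{H_0}{\cdot}{\cdot}$, the operator induced by $\mathbf{H}(\cdot,\cdot)$ on $H_h$ is $T^{-1} - \alpha I$.

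Next I would convert the weak stage equation into an operator equation. Testing with $V_h \in \prodSpace{H_h}$ and representing $\prodSpace{H_h}'$ through the $H_0$ inner product (Lemma~\ref{lemma:density_of_H_h}), \eqref{subeq:lemma:discrete_stability_problem} reads $\big(-\ii A^{-1} + k(\prodOp{T}^{-1} - \alpha I)\big)X_h^n = x_h^n\vecrhs + P_h F_n$, with $P_h$ the $H_0$-orthogonal projection onto $\prodSpace{H_h}$. Multiplying by $\prodOp{T}$ (which commutes with the constant matrices acting on the stage index) and rearranging gives $\big((\ii A^{-1} + k\alpha I)\prodOp{T} - kI\big)X_h^n = -\prodOp{T}(x_h^n\vecrhs + P_h F_n)$; substituting the solution into $x_h^{n+1} = R(\infty)x_h^n + \vect{b}^T A^{-1} X_h^n$ produces exactly the stated formulas for $R_T$ and $S_T$, both with range in $H_h$. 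The bracketed operator is invertible, which makes the formula meaningful: by Proposition~\ref{prop:spectral_theorem} the self-adjoint $T$ is unitarily equivalent to multiplication by a bounded real function $F$, so the bracket is, pointwise in the spectral variable, the matrix $(\ii A^{-1} + k\alpha I)F - kI = \ii F\,(A^{-1} - \eta I)$ with $\eta := \ii k(\alpha - 1/F)$ purely imaginary; since $\sigma(A^{-1}) \subset \C_+$ by Lemma~\ref{rk_spectrum_a}, we have $\eta \notin \sigma(A^{-1})$ and $A^{-1} - \eta I$ is invertible.

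The crux is then to identify the symbols. A short computation based on the representation \eqref{def_rk_stability_fkt2}, together with the identity $(I - zA)^{-1} = (A^{-1} - zI)^{-1}A^{-1}$, shows that the scalar symbol of $R_T$ is exactly $R(\eta)$; equivalently $R_T = \psi(T)$ is a bounded function of $T$ with $\psi(F) = R\big(\ii k(\alpha - 1/F)\big)$, and $S_T$ carries an analogous symbol that vanishes as $F \to 0$. Because $\eta$ is purely imaginary and the method is A-stable, $\abs{R(\eta)} \le 1$, so Proposition~\ref{prop:spectral_theorem} immediately yields $\norm{R_T}_{H_0 \to H_0} \le 1$ and the matching bound for $S_T$; this is the abstract form of von Neumann's stability theorem, and when $\abs{R(\ii t)} = 1$ (the Gauss case) the symbol has modulus one, which is the mechanism behind the exact $H_0$-conservation. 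The $H_h$-bounds marked $(*)$ follow in the same way, now exploiting that $T$ is self-adjoint for the $\blfHStab{\cdot}{\cdot}$ (hence $H_1$-equivalent) inner product.

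The $(*)$-bounds into $H_1$ I would again obtain in the multiplication-operator picture, writing $\norm{w}_{H_1}^2 \lesssim \blfHStab{w}{w} = \int F^{-1}\abs{\widehat w}^2\,d\mu$ and estimating the scalar quantities $F^{-1}\abs{\psi(F)}^2$ (and their analogues for $S_T$), from which the factor $k^{-1/2}$ emerges. The main obstacle I anticipate is making these scalar estimates uniform in both $k$ and $H_h$: since $A$ is generally not normal, reducing a matrix multiplier to a genuinely scalar estimate requires diagonalising (or, in the defective case, Jordan-normalising) $A^{-1}$ and checking that the accompanying constants depend only on the Runge-Kutta method. The property that keeps everything uniform is $\Re\sigma(A^{-1}) > 0$, which simultaneously supplies the invertibility above and the $k$-independent lower bound $\abs{(\ii A^{-1} + k\alpha I)F - kI} \gtrsim \Re(\nu)\,F$ that drives the boundedness of the symbols.
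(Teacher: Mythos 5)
Your first half coincides with the paper's proof: the operator $T$, its $H_0$-self-adjointness, and its self-adjointness with respect to $\blfHStab{\cdot}{\cdot}$ under ellipticity are constructed exactly as the paper does. Your derivation of the formulas for $R_T$ and $S_T$ by eliminating the stage vector is a legitimate (and arguably more direct) alternative: applying $\prodOp{T}$ to the weak stage equation gives $k X_h^n = \prodOp{T}\bigl(x_h^n\vecrhs + F_n + (\ii A^{-1}+k\alpha I)X_h^n\bigr)$, hence $X_h^n = -\bigl((\ii A^{-1}+k\alpha I)\prodOp{T}-kI\bigr)^{-1}\prodOp{T}(x_h^n\vecrhs+F_n)$, and substitution yields \eqref{eq:proof_rk_step_as_operators}; the paper instead verifies the identity by testing \eqref{subeq:lemma:discrete_stability_problem} with the adjoint-built function $\prodOp{T}Y$, $Y=-\bigl((-\ii A^{-T}+k\alpha I)\prodOp{T}-kI\bigr)^{-1}A^{-T}b\varphi$. (Two formal blemishes you should remove: $T^{-1}$ is only defined on $\operatorname{range}(T|_{H_h})$, and the ``$H_0$-orthogonal projection $P_h$ onto $\prodSpace{H_h}$'' need not exist, since $H_h$ is closed in $H_1$ but not necessarily in $H_0$; the multiply-by-$\prodOp{T}$ formulation needs neither object.) Where you genuinely diverge from the paper is in proving the bounds \eqref{eq:operator_bounds_ST}: the paper never invokes the spectral theorem here, but shows that $Y:=\bigl((-\ii A^{-1}-k\alpha I)\prodOp{T}+kI\bigr)^{-1}\prodOp{T}\Phi$ solves $B(Y,W_h)=\innerprod{\prodSpace{H_0}}{\Phi}{W_h}$ and feeds this into the variational Lemma~\ref{lemma_solve_matrix_eqn}, whose Jordan-form, real/imaginary-part splitting delivers all three estimates at once; the symbol calculus you propose is what the paper reserves for Lemma~\ref{lemma_discrete_stability}, where only sup-norm bounds of the symbol on the imaginary axis are needed. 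Your symbol identification (the symbol of $R_T$ is $R(\ii k(\alpha-1/F))$, and the $H_0\to H_0$ and $H_h\to H_h$ bounds follow from A-stability) is correct and consistent with the paper's later computation.

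The genuine gap is in your justification of the two $k^{-1/2}$ bounds. Take $A$ diagonalizable (the Jordan case adds only bookkeeping) and let $\nu$ be an eigenvalue of $A^{-1}$, $\eta(F)=\ii k(\alpha-1/F)$. Using your own formula $\norm{w}_{H_1}^2\lesssim\blfHStab{w}{w}=\int F^{-1}\abs{\widehat w}^2\,d\mu$ together with the fact that the symbol of $S_T$ has size comparable to $\abs{\nu-\eta(F)}^{-1}$, the estimate $\norm{S_T}_{\prodSpace{H_0}\to H_1}\le Ck^{-1/2}$ comes down to the uniform lower bound $F\,\abs{\nu-\eta(F)}^2\gtrsim k$ on the (nonnegative) essential range of $F$. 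The bound you cite as the driver, $\abs{(\ii A^{-1}+k\alpha I)F-kI}\gtrsim\Re(\nu)\,F$, i.e.\ $F\abs{\nu-\eta}^2\gtrsim F(\Re\nu)^2$, fails to deliver this precisely in the regime $F\lesssim k$, where $F(\Re\nu)^2$ is arbitrarily small compared with $k$. In that regime one must instead use that $\abs{\Im\eta}\ge k/F-k\alpha$ blows up, giving $F\abs{\nu-\eta}^2\gtrsim k^2/F\ge k$; only the combination of the two regimes (split at $F\sim k$, or run an AM--GM between the $F(\Re\nu)^2$ and $k^2/F$ contributions) produces a constant depending only on the method and $\alpha$. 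This case distinction is exactly what part (iii) of Lemma~\ref{lemma_solve_matrix_eqn} encodes without any regimes: testing with the solution itself, the imaginary part of the resulting identity gives the $H_0$ bound (using $\abs{\Im\lambda}=\Re\nu>0$), and the real part then gives $k\,\blfHStab{u}{u}\le C\norm{f}^2_{H'}$ in one stroke. So your route can be completed, but not with the single lower bound you state; either add the small-$F$ regime explicitly, or adopt the paper's real/imaginary-part energy argument, which also disposes of the non-normality of $A$ that your symbol reduction must otherwise handle by hand.
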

\begin{proof}
  We construct $T$ with the goal of  $T \approx \mathbf{H}^{-1}$, which we will then use to represent the Runge-Kutta 
  step in 
  terms of the stability function $R$.
  
  We define the operator $T: H_0 \to H_0$ by setting
  $T(w):=u$ where $u \in H_h$ is the unique solution to
  \begin{align*}
    \blfHStab{u}{y}
    &= \innerprod{H_0}{w}{y} \quad  \forall y \in H_h.
  \end{align*}
  Since the Hermitian sesquilinear form on the left-hand side satisfies an inf-sup condition,
  we get that $T$ is well-defined for all $w \in H_0$ and bounded (see for example \cite[Theorem 2.1.44]{book_sauter_schwab}), 
  with a constant that depends only on $\beta_{\widetilde{\mathbf{H}}}$.
  By construction the operator has $\operatorname{range}(T) \subseteq H_h$, and thus we may treat it
  also as a linear operator $H_h \to H_h$ and $H_{0} \to H_1$.
  
  For $w$, $x \in H_0$ we calculate:
  \begin{align*}
    \innerprod{H_0}{w}{Tx} &= \blfHStab{Tw}{Tx} = \overline{\blfHStab{Tx}{Tw}} = \overline{\innerprod{H_0}{x}{Tw}}= \innerprod{H_0}{Tw}{x},
  \end{align*}
  where we used the fact that $\blfHStab{\cdot}{\cdot}$ was assumed to be Hermitian and $Tx,Tw \in H_h$.
  
  The operator $T$ is in general not self-adjoint with respect to the $H_1$ inner product.
  In the case where $\blfHStab{\cdot}{\cdot}$ is elliptic, i.e., if it induces an equivalent inner product on $H_1$, 
  we calculate for $w$, $x \in H_h$:
  \begin{align*}
    \blfHStab{Tw}{x} &= \innerprod{H_0}{w}{x} = \overline{\innerprod{H_0}{x}{w}} = \overline{\blfHStab{Tx}{w}} = \blfHStab{w}{Tx}.
  \end{align*}
  Thus we have that $T$ is also self-adjoint in the $\widetilde{\mathbf{H}}$-scalar product.

  We define the operator $S_T:\prodSpace{H_0} \rightarrow H_0$ by 
  \begin{align*}
    S_T:= - b^T A^{-1} \left( \left(\ii A^{-1} +  k \alpha I \right)\prodOp{T} -  k I \right)^{-1} \prodOp{T}.
  \end{align*}

  We need to show that this operator is well-defined, i.e.,
  $\left( \left( \ii A^{-1} +  k \alpha I\right)\prodOp{T} -  k I\right)^{-1}$ exists, where
  the inverses are taken from $\prodSpace{H_0}\to \prodSpace{H_0}$.
  $T$ is a self-adjoint operator and therefore only has real spectrum. We rewrite the above inverse as
  \begin{align}
    \label{eq:proof_S_is_well_defined}
    \left( \left( \ii A^{-1} +  k \alpha I\right)\prodOp{T} -  k I\right)^{-1}
    &= \left( \prodOp{T} -  k \left( \ii A^{-1} +  k \alpha I\right)^{-1} \right)^{-1}
      \left( \ii A^{-1} +  k \alpha I\right)^{-1}.
  \end{align}  
  The inverse of $\left( \ii A^{-1} +  k \alpha I\right)$ exists, since $\Re(\sigma(A) ) > 0$ (Lemma~\ref{rk_spectrum_a}).
  For the other inverse of the right-hand side of~\eqref{eq:proof_S_is_well_defined}, it is easy to see that the
  matrix has a spectrum with non-vanishing imaginary part. Therefore we can apply Lemma~\ref{lemma_solve_matrix_eqn},
  setting $V=H= H_0$ and $a(x,y):=\innerprod{H_0}{Tx}{y}$ for the existence of the inverse in $H_0$.

  Next we show that $S_T$ satisfies the operator bounds~\eqref{eq:operator_bounds_ST}. 
  Let $\Phi \in \prodSpace{H_0}$ be arbitrary and set
  $Y:=\left( \left(-\ii A^{-1} -  k \alpha I \right)\prodOp{T} +  k I \right)^{-1} \prodOp{T} \Phi$.
  Since $\prodOp{T}$ has the structure $\prodOp{T}=\operatorname{diag}(T,\dots,T)$,
    it commutes with matrices, so that
    \begin{align*}
      \left( \left(\ii A^{-1} +  k \alpha I \right)\prodOp{T} -  k I \right)^{-1} \prodOp{T} &=
      \prodOp{T}\left( \left(\ii A^{-1} +  k \alpha I \right)\prodOp{T} -  k I \right)^{-1}.
    \end{align*}
    Thus we can write $Y=\prodOp{T}\left( \left(-\ii A^{-1} -  k \alpha I \right)\prodOp{T} +  k I \right)^{-1} \Phi$.
  This implies $Y \in \operatorname{range}{\prodOp{T}} \subseteq \prodSpace{H_h}$,
  and also $\operatorname{range}{S_T} \subseteq H_h$.
  
  We fix a test function $W_h \in \prodSpace{H_h}$ and calculate:
  \begin{align*}
    B(Y,W_h)&=\innerprod{\prodSpace{H_0}}{-\ii A^{-1} Y}{W_h} + k \prodOp{\mathbf{H}}(Y,W_h) \\
    &=\blfHStabP{\prodOp{T}(-\ii A^{-1} Y)}{W_h} + k \blfHStabP{Y}{W_h} - k \alpha \innerprod{\prodSpace{H_0}}{Y}{W_h}\\
    &=\blfHStabP{ \left( -\ii A^{-1} \prodOp{T} - k\alpha \prodOp{T} + kI\right)Y}{W_h} 
    +k\alpha \blfHStabP{\prodOp{T} Y}{W_h} - k\alpha \innerprod{\prodSpace{H_0}}{Y}{W_h} \\
    &=\blfHStabP{ \prodOp{T} \Phi}{W_h} + 0 
    =\innerprod{\prodSpace{H_0}}{\Phi}{W_h}.
  \end{align*}
  This variational problem fits the requirements of Lemma~\ref{lemma_solve_matrix_eqn}. This implies
  the estimates:
  \begin{align}
    \norm{Y}_{\prodSpace{H_0} } &\lesssim \norm{ \Phi}_{\prodSpace{H_0}}
    &  \norm{Y}_{\prodSpace{H_1}} & \stackrel{(*)}{\lesssim} \norm{\Phi}_{\prodSpace{H_h}},
    & \norm{Y}_{\prodSpace{H_1}} & \stackrel{(*)}{\lesssim}  k^{-1/2} \norm{\Phi}_{\prodSpace{H_0}}.
  \end{align}
  (In the second equation, we assumed $\Phi\in \prodSpace{H_h}$).
  From the definition of $S_T$ we get $S_T\Phi=b^{T} A^{-1} Y$, which implies~\eqref{eq:operator_bounds_ST}.

  To show the equality~\eqref{eq:proof_rk_step_as_operators}, we perform a similar calculation,
  but use the formal adjoint operator in the second argument of $B$.
  Let $\varphi \in H_0$ be arbitrary, and set $Y:=-\left( \left(-\ii A^{-T} +  k \alpha I\right)\prodOp{T} -  k I \right)^{-1} A^{-T} b \varphi$. By definition of $T$ we have $\prodOp{T}Y \in \prodSpace{H_h}$.
  Using the definition of $T$ we have
  for any function $W_h \in \prodSpace{H_h}$: 
  \begin{align*}
    B(W_h,\prodOp{T}Y)&=
    \innerprod{\prodSpace{H_0}}{-\ii A^{-1} W_h}{\prodOp{T}Y} + k\, \matH(W_h,\prodOp{T} Y)
    = \innerprod{\prodSpace{H_0}}{ -\ii A^{-1} W_h}{\prodOp{T}Y}  - \alpha k\innerprod{\prodSpace{H_0}}{ W_h}{\prodOp{T}Y}
                        + k\blfHStabP{W_h}{\prodOp{T}Y} \\
    &=\innerprod{\prodSpace{H_0}}{ -\ii A^{-1} W_h}{\prodOp{T}Y}  - \alpha k\innerprod{\prodSpace{H_0}}{ W_h}{\prodOp{T}Y}
      + k\innerprod{\prodSpace{H_0}}{W_h}{Y}
    = \innerprod{\prodSpace{H_0}}{ W_h}{\left(\left( \ii A^{-T} -  k \alpha I\right) \prodOp{T} +  k  I\right) Y} \\
    &=\innerprod{\prodSpace{H_0}}{W_h}{ A^{-T} b \varphi}.
  \end{align*}

  Using equation \eqref{subeq:lemma:discrete_stability_problem} with $\prodOp{T}Y$ as a test function and $W_h=X^n_h$ in the previous calculation, this gives:
  \begin{align*}
    \innerprod{H_0}{b^T A^{-1 } X^n_h}{\varphi} &= \innerprod{\prodSpace{H_0}}{X^n_h}{A^{-T} b \varphi}
    = B(X^n_h,\prodOp{T}Y) = \innerprod{\prodSpace{H_0}}{x_h^{n} \vecrhs}{ \prodOp{T}Y} + \innerprod{\prodSpace{H_0}}{F_n}{\prodOp{T} Y} \\
    &=\innerprod{H_0}{  S_T \vecrhs x_h^n }{\varphi} + \innerprod{H_0}{ S_T F_n }{\varphi},
  \end{align*}
  where in the last step, we used that $T$ is $H_0$-self-adjoint in order to move the operators to the left-hand side of the inner product.
  Adding a term $\innerprod{H_0}{R(\infty) x_h^n}{\varphi}$to both sides 
  and  using the definition $x^{n+1}_h=R(\infty) x_h^n + b^T A^{-1} X^n_h$ then completes the proof.
\end{proof}

\begin{proof}[Proof of Lemma~\ref{lemma_discrete_stability}]  
  Using the representation (\ref{eq:proof_rk_step_as_operators}), 
  we can show the stated stability estimates by using the A-stability of the method.
  Since $T$ is a self-adjoint operator, Proposition~\ref{prop:spectral_theorem} ensures  
  the existence of a measure space $\left(\mathcal{O},\mu\right)$, 
  a unitary transformation $\mathcal{U}: H_0 \to L^2(\mu)$, and
  a measurable function $f: \mathcal{O} \to \R$ such that for all $x \in L^2(\mu)$:
  \begin{align*}
    \mathcal{U}T\mathcal{U}^{-1} x &= f \cdot x =: M_{f}(x).
  \end{align*}

  Using this transformation we get:
  \begin{align*}
    (R_T -R(\infty)) u
    &= -\mathcal{U}^{-1} \vect{b}^T A^{-1} \left( \left( \ii A^{-1} +  k \alpha \right) \prodOp{M_f} - k \right)^{-1}  \prodOp{M_f} (-\ii A^{-1}\ones) \mathcal{U} x_h^n\\
    &= \mathcal{U}^{-1} M_g \, \mathcal{U} x_h^n
  \end{align*}
  with the new function $g(z):=\vect{b}^T A^{-1} \left( \left(I - \ii k \alpha A\right) f(z) + \ii k A \right)^{-1}  f(z) \ones$.

  For $f(z)=0$ it is easy to see that $g(z)=0$. For $f(z) \ne 0$ we get:
  \begin{align*}
    g(z)&= \vect{b}^T A^{-1} \left( \left(I - \ii k \alpha A\right) f(z) + \ii k A \right)^{-1}  f(z) \ones  \\
    &=\vect{b}^T A^{-1} \left( \left(I - \ii k \alpha A\right)  + \frac{\ii k}{f(z)} A \right)^{-1} \ones  \\
    &=\vect{b}^T A^{-1} \left( I - \ii k \left(\alpha- \frac{1}{f(z)} \right) A \right)^{-1} \ones \\
    &= R \left(\ii k \left(\alpha-\frac{1}{f(z)}\right)\right) - R(\infty). 
  \end{align*}

  Setting $h(z):=\left(\alpha-\frac{1}{f(z)}\right)$ with the convention $h(z):=\infty$ for $f(z)=0$, we arrive at 
  \begin{align}
    \norm{ R_T x_h^n}_{H_0} 
    &= \ltwonormgen{\mu}{\left(R(\infty) + g(z)\right) \mathcal{U} x_h^n} = 
    \ltwonormgen{\mu}{ R (\ii k h(z)) \mathcal{U} x_{n}} 
    \leq \norm{x_h^n}_{H_0} \label{eqn:stab_lemma_h_leq_1},
  \end{align}
  where in the last step we utilized that $z\mapsto f(z)$ and thus also $z\mapsto h(z)$
  is real valued, $\abs{R(z)} \leq 1$ on the imaginary axis and $\mathcal{U}$ is unitary. If we assume that
  $\blfHStab{\cdot}{\cdot}$ is elliptic and write $\widetilde{H}_1$ for $H_1$ 
  with the $\blfHStab{\cdot}{\cdot}$ inner product,
  we can apply the same argument, using the spectral representation theorem in 
  $\left(H_h,\norm{\cdot}_{\widetilde{H}_1}\right)$,  to show that
  $\norm{ R_T x_h^{n+1}}_{\widetilde{H}_1} \stackrel{(*)}{\leq} \norm{x_h^{n}}_{\widetilde{H}_1} $.

  Setting $\varphi:=x^{n+1}_h$ in \eqref{eq:proof_rk_step_as_operators} then directly gives 
  the $H_0$-stability estimate (\ref{eq:lemma_discrete_stability_l2}):
  \begin{align*}
    \norm{x^{n+1}_h}_{H_0} &\leq \norm{x^{n}_h}_{H_0} + C \norm{F_n}_{\prodSpace{H_0}},
  \end{align*}    
  which implies \eqref{eq:lemma_discrete_stability_l2} via the discrete Gronwall lemma.

  We now  show (\ref{eq:lemma_discrete_stability_h1_all_f}). 
  We repeat the previous construction. In order to reformulate \eqref{eq:proof_rk_step_as_operators} in terms of the $\widetilde H_1$ inner product instead
  of the $H_0$ inner product,
  we take a sequence $(L_{\varepsilon})_{\varepsilon \geq 0} \subseteq H_h$ such that
  $\innerprod{H_0}{W}{L_{\varepsilon}} \stackrel{\varepsilon\rightarrow 0}{\to} \blfHStab{W}{x^{n+1}_h}$
  for all $W \in H_h$ (which is possible due to Lemma~\ref{lemma:density_of_H_h}, and the fact that the right-hand side is a continuous functional in $H_h$)
. Then  use $\varphi:= L_{\varepsilon}$ as a test function. By Lemma~\ref{lemma:3.5}
  \begin{align*}
    \innerprod{H_0}{x^{n+1}_h}{\; L_{\varepsilon}}
    &= \innerprod{H_0}{ R_T x_h^n}{L_\varepsilon} + \innerprod{H_0}{S_T F_n}{L_{\varepsilon} }.
  \end{align*}
  Since the stability properties of $S_T$ depends on whether or not its argument is in $H_0$ or $H_h$ 
(cf.~(\ref{eq:operator_bounds_ST})) we choose an arbitrary $W_h \in \prodSpace{H_h}$ and write:
  
  \begin{align*}
    \innerprod{H_0}{x^{n+1}_h}{\; L_{\varepsilon}}
    &= \innerprod{H_0}{ R_T x_h^n}{L_\varepsilon} + \innerprod{H_0}{S_T W_h}{L_{\varepsilon} } + \innerprod{H_0}{S_T\left(F_n - W_h \right)}{L_{\varepsilon} }.
  \end{align*}
  Passing to the limit $\varepsilon \to 0$ and using the $H_1$-stability of $S_T$, we get 
  \begin{align*}
    \limsup_{\varepsilon \rightarrow 0} \abs{\innerprod{H_0}{S_T W_h}{ L_\varepsilon}}
    &\stackrel{(*)}{\leq} C \norm{W_h}_{\prodSpace{\widetilde H_1}} \norm{x^{n+1}_h}_{\widetilde H_1}, \\
    \limsup_{\varepsilon \rightarrow 0}  \abs{\innerprod{H_0}{S_T\left( F_n - W_h \right)}{ L_\varepsilon}}
    &\stackrel{(*)}{\leq} C k^{-1/2}\norm{F_n - W_h}_{\prodSpace{H_0}} \norm{x^{n+1}_h}_{\widetilde H_1}. \\
  \end{align*}
  Therefore we end up with:
  \begin{align}
    \label{int_stab_proof_h1est}
    \norm{x^{n+1}_h}_{\widetilde H_1}^2 & \stackrel{(*)}{\leq }
    \norm{ R_T x_h^n}_{\widetilde H_1} \norm{x^{n+1}_h}_{\widetilde H_1}
    + C \norm{W_h}_{\prodSpace{\widetilde H_1}} \norm{x^{n+1}_h}_{\widetilde H_1}
    + C k^{-1/2} \norm{ F_n - W_h}_{\prodSpace{H_0}} \norm{x^{n+1}_h}_{\widetilde H_1} .
  \end{align}
  Since we have already established the bound (\ref{eqn:stab_lemma_h_leq_1}) on $R_T x_h^n$ 
we get from (\ref{int_stab_proof_h1est}):
  \begin{align*}
    \norm{x^{n+1}_h}_{\widetilde H_1} &\stackrel{(*)}{\leq} \norm{x_h^n}_{\widetilde H_1} 
                                        + C \norm{W_h}_{\prodSpace{\widetilde H_1}} 
    + C k^{-1/2} \norm{ F_n - W_h}_{\prodSpace{H_0}}.
  \end{align*}
  By taking the infimum over all $W_h$ and applying the discrete Gronwall lemma, this gives:
  \begin{align*}
    \norm{x^{n}_h}_{\widetilde H_1} &\stackrel{(*)}{\leq} \norm{x_h^0}_{\widetilde H_1}
                                      + C \sum_{j=0}^{n-1}{\inf_{W_h \in \prodSpace{H_h}}
                                        \left( \norm{W_h}_{\prodSpace{\widetilde H_1}} + k^{-1/2}\norm{ F_j -W_h}_{\prodSpace{H_0}} \right)}.
  \end{align*}
  The equivalence of the $\widetilde H_1$ and $H_1$-norms then gives the estimate \eqref{eq:lemma_discrete_stability_h1}.
  
  To get the conservation of the $H_0$ norm we need to show the reverse inequality.
  This time, we use $\varphi:=R_T x_h^n$ as a test function in \eqref{eq:proof_rk_step_as_operators} to get
  \begin{align*}
    \innerprod{H_0}{x^{n+1}_h}{R_T x_h^n}
    &=\innerprod{H_0}{R_T \, x_h^n}{R_T x_h^n}.
  \end{align*}
  We again use the characterization of $R_T$ by the spectral theorem and see that we can replace the inequality
  \eqref{eqn:stab_lemma_h_leq_1} by an equality if we assume $\abs{R(\ii t)} = 1$. Combining
  this observation with the Cauchy-Schwarz inequality for the left-hand side we get
  \begin{align*}
    \norm{x^{n+1}_h}_{H_0}&\geq \norm{x_h^n}_{H_0}.
  \end{align*}    
  Completely analogously to the $H_0$-case we can also show 
  conservation of the $\widetilde H_1$-norm  when $\abs{R(\ii t)}=1$ (again, assuming ellipticity).
  Since the stated energy only differs by $\alpha \norm{x_h^n}_{H_0}^2$ from this norm, we can just subtract it (we already showed that the $H_0$-norm is
  conserved) to get energy conservation.
\end{proof}

We now investigate convergence properties of the spatial discretization, which in our abstract setting is determined by the
space $H_h \subseteq H_1$. The semi-discrete problem is formulated as follows:
\begin{assumption}
  \label{assumption:semidiscrete_problem}
  Let $H_2 \subseteq H_1$ be a subspace, and let $c: H_2 \times H_1 \to \R$ be a bounded sesquilinear form.
  Define $V_0 := \left\{v_h \in H_2: c(v_h,w_h) = 0 \;\; \forall  w_h \in H_h \right\}\subseteq H_2$.
  Let $X^n \in \prodSpace{H_2}$, $x_n \in H_2$ solve:
  \begin{subequations}
    \label{eq:def:abstract_sd_X_n}
    \begin{align}
      B(X^n,V) + k \underline{c}(X^n,V)&= \innerprod{\prodSpace{H_0}}{x^n \, \vecrhs}{V}
      + \innerprod{\prodSpace{H_0}}{F_n}{V} \qquad \forall V \in \prodSpace{H_h},\\
      x^{n+1} &= R(\infty) x^n + \vect{b}^T A^{-1} X^n,
    \end{align}
    \label{subeq:lemma:abstract_semidiscrete_problem}    
  \end{subequations}
  for some given right-hand sides $F_n \in \prodSpace{H_0}$ and $x^0 \in H_1$, where $\underline{c}(U,V):=\sum_{j=0}^{m}{c(u_j,v_j)}$.
\end{assumption}

\begin{remark}
  In our analysis below, the purpose of the sesquilinear form $c$ is 
  to account for a consistency error that arises from the fact that our error analysis is performed in a
  non-conforming setting. Specifically, the discrete and continuous test spaces satisfy, in general, 
  $\hat H(X_h,Y_h) \not\subseteq \hat H(H^1(\Omega),H^{-1/2}(\Gamma))$, where the constrained spaces 
  $\hat H(X_h,Y_h)$ and $\hat H(H^1(\Omega),H^{-1/2}(\Gamma))$ are defined in 
  Lemma~\ref{equivalent_formulation_lemma}.
\eremk
\end{remark}

In order to estimate the error $\sdx{n} - \fdx{n}$ we introduce a Ritz-style projector:
\begin{definition}
  Set
  $\Pi_h:  H_2 \to H_h, \;  w \mapsto u_h$,  where $u_h \in H_h$ solves
  \begin{align}
    \label{eq:ritz_projection_eq}
    \blfHStab{u_h}{v_h}
    &= \blfHStab{w}{v_h} + c(w,v_h)   \quad \forall v_h \in H_h.  
  \end{align}
\end{definition}
Note that $\Pi_h$ is well-defined by Assumption~\ref{assumption:abstract_setting}.
This projection allows us to bound the error of our fully discrete scheme in terms of the approximation properties of $\Pi_h$. We formalize this
in the following lemma:
\begin{lemma}
  \label{lemma:abstract_approx_with_projection}
  Let Assumptions~\ref{assumption:abstract_setting} and \ref{assumption:semidiscrete_problem} be satisfied.   
  Write $\matH X^{j}:=\ii k^{-1} A^{-1} \left(X^{j}-x^{j} \ones\right)$.
\begin{enumerate}[(i)]
\item 
  There exist constants $C_0,C_1 > 0$ that depend only on the Runge-Kutta method and $\blfHStab{\cdot}{\cdot}$ 
  but not on $k$ and $H_h$ such that for all $n \in \N$ we can estimate:
  \begin{align}
    \norm{\sdx{n+1} - \fdx{n+1}}_{H_0}
    &\leq C_0 \left(\norm{\sdx{0}-\fdx{0}}_{H_0} +  \norm{\sdx{0}-\Pi_h \sdx{0}}_{H_0}\right)
      + C_1 \; k \sum_{j=0}^{n}{\left(\norm{\left( I - \underline{\Pi_h} \right) \matH \sdX{j}}_{\prodSpace{H_0}} + \norm{\left( I - \prodOp{\Pi_h} \right) \sdX{j}}_{\prodSpace{H_0}} \right)}. \label{eq:error_est_0}
  \end{align}
\item
  Assume additionally that $\widetilde{H}(\cdot,\cdot)$ is elliptic and  the following approximation property holds
  for all $u \in H_1$:
  \begin{align}
  \label{eq:lemma:abstract_approx_with_projection-100}
    \inf_{w_h \in H_h}{\norm{ u- w_h}_{H_0} } &\leq C k^{+1/2} \norm{u}_{H_1}. 
  \end{align}
  Then we have 
  \begin{align}
    \norm{\sdx{n+1} - \fdx{n+1}}_{H_1}
    &\stackrel{(*)}{\leq} C_0 \left(\norm{\sdx{0}-\fdx{0}}_{H_1} +  \norm{\sdx{0}-\Pi_h \sdx{0}}_{H_1}\right) +
    C_1 \; k \sum_{j=0}^{n}{\left(\norm{\left( I - \underline{\Pi_h} \right) \matH \sdX{j}}_{\prodSpace{H_1}} 
        + \norm{\left( I - \prodOp{\Pi_h} \right) \sdX{j}}_{\prodSpace{H_1}} \right)}. \label{eq:error_est_1}
  \end{align}
\end{enumerate}
\end{lemma}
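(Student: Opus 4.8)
The plan is to use the classical elliptic (Ritz) projection splitting, adapted to the Runge--Kutta stage structure. I introduce the projection error $\rho^n := \sdx{n} - \Pi_h\sdx{n}$ and the discrete error $\theta^n := \Pi_h\sdx{n} - \fdx{n}$, so that $\sdx{n} - \fdx{n} = \rho^n + \theta^n$, together with the stage quantity $\Theta^n := \prodOp{\Pi_h}\sdX{n} - \fdX{n}$. The idea is to control $\rho^n$ purely by the approximation properties of $\Pi_h$ and $\theta^n$ by the discrete stability Lemma~\ref{lemma_discrete_stability}, and then to combine both by the triangle inequality.

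The key algebraic step is to show that $(\theta^n,\Theta^n)$ fits the abstract framework \eqref{subeq:lemma:discrete_stability_problem}. Writing $B(U,V) = -\innerprod{\prodOp{H_0}}{\ii A^{-1}U}{V} + k\,\blfHStabP{U}{V} - k\alpha\innerprod{\prodOp{H_0}}{U}{V}$ and using the defining relation of $\Pi_h$ (so that $\blfHStabP{\prodOp{\Pi_h}\sdX{n}}{V_h} = \blfHStabP{\sdX{n}}{V_h} + \underline{c}(\sdX{n},V_h)$ for $V_h\in\prodOp{H_h}$), I would eliminate the stiffness term $k\,\blfHStabP{\sdX{n}}{V_h}$ using the semi-discrete equation \eqref{eq:def:abstract_sd_X_n}. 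The crucial point is that the $k\,\underline{c}$-consistency contribution coming from the semi-discrete problem cancels exactly against the $c$-correction built into $\Pi_h$, so that only $H_0$-type terms survive. After also absorbing $\innerprod{\prodOp{H_0}}{\rho^n\vecrhs}{V_h}$ (via $\rho^n\vecrhs = -\ii A^{-1}(I-\prodOp{\Pi_h})(\sdx{n}\ones)$ and the fact that $\prodOp{\Pi_h}$ commutes with matrices) this yields, for all $V_h\in\prodOp{H_h}$,
\begin{align*}
  B(\Theta^n,V_h) = \innerprod{\prodOp{H_0}}{\theta^n\vecrhs}{V_h} + \innerprod{\prodOp{H_0}}{\widetilde F_n}{V_h}, \qquad \widetilde F_n := k(I-\prodOp{\Pi_h})\matH\sdX{n} + k\alpha(I-\prodOp{\Pi_h})\sdX{n},
\end{align*}
while the step equation $\theta^{n+1} = R(\infty)\theta^n + \vect{b}^T A^{-1}\Theta^n$ is immediate from the linearity of $\Pi_h$.

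For the projection error $\rho^n$ I would use a telescoping identity. Combining the step equation with $\sdX{n} = \sdx{n}\ones - \ii kA\,\matH\sdX{n}$ and $\vect{b}^T A^{-1}\ones = 1 - R(\infty)$, the $R(\infty)$-terms cancel and one obtains $\rho^{n+1} - \rho^n = -\ii k\,\vect{b}^T(I-\prodOp{\Pi_h})\matH\sdX{n}$; summing over $n$ gives, in both the $H_0$- and the $H_1$-norm and without any ellipticity,
\begin{align*}
  \norm{\rho^{n+1}} \leq \norm{\rho^0} + k\,\abs{\vect{b}^T}\sum_{j=0}^{n}\norm{(I-\prodOp{\Pi_h})\matH\sdX{j}}.
\end{align*}
This already carries the factor $k$ demanded by \eqref{eq:error_est_0} and \eqref{eq:error_est_1}. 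For part~(i) I then apply the $H_0$-estimate \eqref{eq:lemma_discrete_stability_l2} to $\theta$, bound $\norm{\widetilde F_j}_{\prodOp{H_0}} \leq k\,\norm{(I-\prodOp{\Pi_h})\matH\sdX{j}}_{\prodOp{H_0}} + k\alpha\,\norm{(I-\prodOp{\Pi_h})\sdX{j}}_{\prodOp{H_0}}$, and combine with the $\rho$-bound and $\norm{\theta^0}_{H_0}\leq\norm{\sdx{0}-\fdx{0}}_{H_0}+\norm{\rho^0}_{H_0}$ to reach \eqref{eq:error_est_0}.

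For part~(ii) I would repeat the argument with the $H_1$-estimate \eqref{eq:lemma_discrete_stability_h1_all_f} in place of \eqref{eq:lemma_discrete_stability_l2}; this is where the ellipticity of $\blfHStab{\cdot}{\cdot}$, and hence the marking $(*)$, enters. The main obstacle is the infimum term $\inf_{W_h}\big(\norm{W_h}_{\prodOp{H_1}} + k^{-1/2}\norm{\widetilde F_j - W_h}_{\prodOp{H_0}}\big)$: the naive choice $W_h = 0$ only gives the suboptimal factor $k^{1/2}$. To recover the full factor $k$, I would write $\widetilde F_j = k\,g_j$ with $g_j\in\prodOp{H_1}$ and take $W_h = k\,\hat W_h$, where $\hat W_h\in\prodOp{H_h}$ is both $H_1$-stable and realizes the $H_0$-approximation rate $\norm{g_j - \hat W_h}_{\prodOp{H_0}}\leq Ck^{1/2}\norm{g_j}_{\prodOp{H_1}}$ furnished by \eqref{eq:lemma:abstract_approx_with_projection-100}; both contributions then scale like $k\,\norm{g_j}_{\prodOp{H_1}}$, giving \eqref{eq:error_est_1}. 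Providing such a simultaneously stable and approximating element, rather than merely the best-approximation bound, is the delicate technical point where I expect the work to concentrate.
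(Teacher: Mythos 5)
Your proposal is correct and follows essentially the same route as the paper: the Ritz-projection splitting with the identical consistency term $\Xi^n = k\left(I-\underline{\Pi_h}\right)\matH\sdX{n} + k\,\alpha\left(I-\underline{\Pi_h}\right)\sdX{n}$, discrete stability (Lemma~\ref{lemma_discrete_stability}) applied to the projected error $\theta^n$, the same telescoping identity $\rho^{n+1}-\rho^{n} = -\ii k\,\vect{b}^T\left(I-\underline{\Pi_h}\right)\matH\sdX{n}$ for the projection error, and removal of the assumption $\fdx{0}=\Pi_h\sdx{0}$ by discrete stability. The one point you flag as delicate---exhibiting a $W_h$ that is simultaneously $H_1$-stable and realizes the $H_0$-rate of \eqref{eq:lemma:abstract_approx_with_projection-100}---is precisely where the paper is equally terse (it only states that the $k^{-1/2}$ term is ``absorbed using the approximation assumption''), so your treatment is no less complete than the original.
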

\begin{remark}
  The definition of $\matH$ may seem arbitrary in the abstract context, but it is chosen in a way that reflects the pointwise semi-discrete
  problem of \eqref{rk_approx_whole_space_1}.   
\eremk
\end{remark}

\begin{remark}
Assumption \eqref{eq:lemma:abstract_approx_with_projection-100} introduces a (in practice quite weak) coupling between mesh size $h$ and time-step size $k$. In Section~\ref{sect:better_h1_est} we will later also
see a way to remove this assumption for a restricted set of Runge-Kutta methods. 
\eremk
\end{remark}

\begin{proof}[Proof of Lemma~\ref{lemma:abstract_approx_with_projection}]
For simplicity, we consider for the moment the case $\alpha=0$ and  $F_j=0 \;\forall j \in \N_0$ and calculate
for $V_h \in H_h$
\begin{align*}
  B\left(\underline{\Pi_h} \sdX{n},V_h \right) &= \innerprod{\prodSpace{H_0}}{-\ii A^{-1} \underline{\Pi_h} \sdX{n}}{V_h}
  + k \, \blfHStabP{\underline{\Pi_h} \sdX{n}}{V_h} \\
  &= \innerprod{\prodSpace{H_0}}{-\ii A^{-1} \underline{\Pi_h} \sdX{n}}{V_h} + k\,\blfHStabP{ \sdX{n}}{V_h} + k \underline{c}(\sdX{n},V_h) \\
  &\stackrel{(\ref{eq:def:abstact_blf_B}), (\ref{eq:def:abstract_sd_X_n})}{=} \innerprod{\prodSpace{H_0}}{ -\ii A^{-1} \underline{\Pi_h} \sdX{n}}{V_h}
  + \innerprod{\prodSpace{H_0}}{\sdx{n} \vecrhs}{V_h} - \innerprod{\prodSpace{H_0}}{-\ii A^{-1} \sdX{n}}{V_h}\\
  &= \innerprod{\prodSpace{H_0}}{\Pi_h \sdx{n} \vecrhs}{V_h} + \innerprod{\prodSpace{H_0}}{\sdx{n} \vecrhs - \Pi_h \sdx{n} \vecrhs}{V_h} 
  + \innerprod{\prodSpace{H_0}}{\ii A^{-1} (I-\underline{\Pi_h}) \sdX{n}}{V_h} \\
  &=  \innerprod{\prodSpace{H_0}}{\Pi_h \sdx{n} \vecrhs}{V_h}
  + \innerprod{\prodSpace{H_0}}{\left(\sdx{n}\vecrhs + \ii A^{-1} \sdX{n}\right) - \underline{\Pi_h}\left(\sdx{n}\vecrhs + \ii A^{-1} \sdX{n}\right)}{V_h}   \\
  &= \innerprod{\prodSpace{H_0}}{\Pi_h \sdx{n} \vecrhs}{V_h} + k \innerprod{\prodSpace{H_0}}{\left(I-\underline{\Pi_h} \right) \matH \sdX{n}}{V_h}.
\end{align*}

In the general case, doing a completely analogous computation, we see that $\underline{\Pi_h} \sdX{n}$ solves:
\begin{align}
  B(\underline{\Pi_h} \sdX{n},V_h) &= \innerprod{\prodSpace{H_0}}{\Pi_h \sdx{n} \vecrhs}{V_h} + \innerprod{\prodSpace{H_0}}{F_n}{V_h}
  + \innerprod{\prodSpace{H_0}}{\Xi_n}{V_h}, \nonumber\\ 
  \text{with } \Xi^n&:= k \,\left( I - \underline{\Pi_h} \right)  \matH \sdX{n} +  k\, \alpha \, \left( I - \underline{\Pi_h} \right) \sdX{n}. \label{eq:def_consistency_errors}
\end{align}

We now consider the error propagation between the projection and the fully discrete solution, and set
$E^n:= \left(\underline{\Pi_h} \sdX{n} - \fdX{n} \right)$ and $e_n:=\Pi_h \sdx{n} - \fdx{n}$. 

For simplicity, we now assume that $\fdx{0}= \Pi_h \sdx{0}$. Then the error solves:
\begin{align}
  B(E^n,V_h)&= B(\underline{\Pi_h} \sdX{n},V_h) - B(\fdX{n},V_h)  
  =\innerprod{\prodSpace{H_0}}{\Pi_h \sdx{n} \vecrhs}{V_h} -  \innerprod{\prodSpace{H_0}}{\fdx{n}\vecrhs}{V_h} + \innerprod{\prodSpace{H_0}}{\Xi^n}{V_h}  \nonumber\\  
  &=\innerprod{\prodSpace{H_0}}{e_n \vecrhs}{V_h} + \innerprod{\prodSpace{H_0}}{\Xi^n}{V_h}.  \label{eq:error_eqn_with_consistency_errors}
\end{align}
By linearity of $\Pi_h$, we have $e_{n+1}= R(\infty) e_n + \vect{b}^T A^{-1} E^n$. 
So the error terms fit into the setting of our discrete
stability lemma (Lemma~\ref{lemma_discrete_stability}). We get:
\begin{align*}
  \norm{e_{n+1}}_{H_0} &\leq   C  \sum_{j=0}^{n}{ \norm{\Xi^j}_{\prodSpace{H_0}}},  &
  \norm{e_{n+1}}_{H_1} &\stackrel{(*)}{\leq}  C  \sum_{j=0}^{n}{ \norm{\Xi^j}_{\prodSpace{H_1}}},
\end{align*}
where the second estimate again depends on the ellipticity of $\blfHStab{\cdot}{\cdot}$, and we
absorbed the $k^{-1/2}$ term using the approximation assumption (\ref{eq:lemma:abstract_approx_with_projection-100}).
inserting the Ritz projector and using the triangle inequality gives:  
\begin{align*}
  \norm{\sdx{n+1} - \fdx{n+1}}_{H_0} &\leq \norm{\sdx{n+1} - \Pi_h \sdx{n+1}}_{H_0} + \norm{\Pi_h \sdx{n+1} - \fdx{n+1}}_{H_0}\\
  & \leq \norm{ \left( I - \Pi_h \right) \sdx{n+1}}_{H_0} +  C  \;k \sum_{j=0}^{n}{ 
    \left(\norm{\left( I - \underline{\Pi_h} \right) \matH \sdX{j}}_{\prodSpace{H_0}} + \norm{\left( I - \underline{\Pi_h} \right) \sdX{j}}_{\prodSpace{H_0}} \right)}. 
\end{align*}
In order to slightly simplify the above expression we would like to absorb the first term into the sum. Since
we assumed  $\Pi_h \sdx{0}=\sdx{0}$ we get
\begin{align*}
  \norm{ \left( I - \Pi_h \right) \sdx{n+1}}_{H_0} &= 
  \norm{ \sum_{j=0}^{n}{\left( I - \Pi_h \right) \left(\sdx{j+1} - \sdx{j} \right)}}_{H_0} 
  \leq \sum_{j=0}^{n}{ \norm{\left( I - \Pi_h \right) \left(\sdx{j+1} - \sdx{j} \right)}_{H_0}} \\
  &= k\sum_{j=0}^{n}{ \norm{\left( I - \Pi_h \right) \left( \vect{b}^T \matH \sdX{j} \right)}_{H_0}} 
  \leq C\, k \sum_{j=0}^{n}{ \norm{\left( I - \underline{\Pi_h} \right) \matH \sdX{j}}_{\prodSpace{H_0}}},
\end{align*}
which then gives \eqref{eq:error_est_0}.
In order remove the requirement $\fdx{0}=\Pi_h\sdx{0}$, we just note that due to the discrete stability, proved in Lemma~\ref{lemma_discrete_stability}, perturbing the initial condition only adds a term
 $\norm{\sdx{0}-\fdx{0}}_{H_0} + \norm{\sdx{0}-\Pi_h \sdx{0}}_{H_0}$ to our final estimate.
A completely analogous argument, replacing $H_0$ with $H_1$ gives \eqref{eq:error_est_1}, as long as we make the stated additional assumptions.
\end{proof}

\begin{remark}
  Careful inspection of the proof shows that we did not in fact need the approximation property\eqref{eq:lemma:abstract_approx_with_projection-100} for arbitrary $u \in H_1$, but only
  for the semi-discrete solutions $\sdX{n}$ and $\matH \sdX{n}$. This insight may be useful when using non-uniform triangulations.
\eremk
\end{remark}

In the previous lemma, we reduced the approximation in each time step to the approximation properties of the Ritz projection $\Pi_h$.
The following lemma, which is a modified variation of C\'ea's lemma, tells us that this approximation is quasi-optimal in $H_1$.
\begin{lemma}
\label{lemma:quasi_optimality_of_ritz}
There exists a constant $C>0$ that depends only on the continuity of $\blfHStab{\cdot}{\cdot}$ and $c(\cdot,\cdot)$
and the inf-sup constant $\beta_{\widetilde{\mathbf{H}}}$ from \eqref{eq:abstract_setting_infsup} such that
for all $x \in H_2$ the following estimate holds:
\begin{align}
  \norm{ (I-\Pi_h) x}_{H_1}&\leq C \left(\inf_{x_h \in H_h} {\norm{x-x_h}_{H_1} } + \inf_{y_h \in V_0}{\norm{c(x-y_h, \cdot)}_{H_h'}} \right). 
\end{align}
\end{lemma}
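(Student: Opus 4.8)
The plan is to run a Strang-type argument: bound the full error by a best-approximation term plus a purely discrete error, and then control the discrete error through the inf-sup stability of $\blfHStab{\cdot}{\cdot}$, absorbing the consistency contribution coming from $c$ by means of the defining property of $V_0$.

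First I would fix arbitrary $x_h \in H_h$ and $y_h \in V_0$ and split by the triangle inequality,
\[
  \norm{(I - \Pi_h)x}_{H_1} \leq \norm{x - x_h}_{H_1} + \norm{\Pi_h x - x_h}_{H_1}.
\]
The first summand is already of best-approximation type, so it remains to estimate the \emph{discrete} quantity $\Pi_h x - x_h \in H_h$. Since this element lies in $H_h$, the inf-sup condition \eqref{eq:abstract_setting_infsup} applies to it and yields
\[
  \beta_{\widetilde{\mathbf{H}}} \norm{\Pi_h x - x_h}_{H_1} \leq \sup_{v_h \in H_h \setminus \{0\}} \frac{\abs{\blfHStab{\Pi_h x - x_h}{v_h}}}{\norm{v_h}_{H_1}}.
\]

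Next I would expand the numerator using the definition \eqref{eq:ritz_projection_eq} of the Ritz projector, namely $\blfHStab{\Pi_h x}{v_h} = \blfHStab{x}{v_h} + c(x,v_h)$, to obtain
\[
  \blfHStab{\Pi_h x - x_h}{v_h} = \blfHStab{x - x_h}{v_h} + c(x,v_h).
\]
Here is the one genuinely substantive step: since $y_h \in V_0$ satisfies $c(y_h, w_h) = 0$ for all $w_h \in H_h$, and in particular $c(y_h, v_h) = 0$, I may replace $c(x,v_h)$ by $c(x - y_h, v_h)$ at no cost. Estimating the first term by continuity of $\blfHStab{\cdot}{\cdot}$ and the second by the $H_h'$-dual norm of $c(x-y_h,\cdot)$, and dividing by $\norm{v_h}_{H_1}$, gives
\[
  \beta_{\widetilde{\mathbf{H}}} \norm{\Pi_h x - x_h}_{H_1} \leq C \norm{x - x_h}_{H_1} + \norm{c(x - y_h, \cdot)}_{H_h'}.
\]
Inserting this into the triangle inequality and then taking the infimum over $x_h \in H_h$ and, independently, over $y_h \in V_0$ yields the claimed bound.

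The argument is essentially routine once the right decomposition is chosen; the only point requiring genuine care — and the place where the structure of the statement is really exploited — is the insertion of $y_h \in V_0$, which converts the uncontrolled consistency term $c(x,v_h)$ into the approximable term $c(x-y_h,v_h)$. I would also confirm along the way that $x - y_h \in H_2$, so that $c(x-y_h,\cdot)$ is indeed a well-defined functional, and that the final constant depends only on $\beta_{\widetilde{\mathbf{H}}}$ and the continuity constants of $\blfHStab{\cdot}{\cdot}$ and $c$, which is transparent from the chain of inequalities above.
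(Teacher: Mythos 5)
Your proposal is correct and is essentially the paper's own proof: the same triangle-inequality split into a best-approximation term and the discrete error $\Pi_h x - x_h$, the same application of the inf-sup condition \eqref{eq:abstract_setting_infsup}, the same use of the defining equation \eqref{eq:ritz_projection_eq} of $\Pi_h$, and the same key insertion of $y_h \in V_0$ to replace $c(x,v_h)$ by $c(x-y_h,v_h)$. The only differences are cosmetic (a sign convention in the discrete error and your explicit, correct remark that $x - y_h \in H_2$ since $V_0 \subseteq H_2$).
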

\begin{proof}
  For any $x_h \in H_h$ and $y_h \in V_0$, condition \eqref{eq:abstract_setting_infsup} gives:
    \begin{align*}
      \norm{(I - \Pi_h ) x}_{H_1}&\leq  \norm{x - x_h}_{H_1} + \norm{ x_h - \Pi_h x}_{H_1} \\
      &\lesssim \norm{x - x_h}_{H_1} + \sup_{v_h \in H_h \setminus \{0\}} \frac{|\blfHStab{x_h - \Pi_h x}{v_h}|}{\norm{v_h}_{H_1}}\\
      &= \norm{x - x_h}_{H_1} + \sup_{v_h \in H_h \setminus \{0\}} \frac{|\blfHStab{x_h - x}{v_h} - c(x,v_h)|}{\norm{v_h}_{H_1}}  \\
      &= \norm{x - x_h}_{H_1} + \sup_{v_h \in H_h \setminus \{0\}} \frac{|\blfHStab{x_h - x}{v_h} - c(x - y_h,v_h)|}{\norm{v_h}_{H_1}} \\
      &\lesssim \norm{x-x_h}_{H_1} + \sup_{v_h \in H_h \setminus \{0\}} {\frac{|c(x - y_h,v_h)|}{\norm{v_h}_{H_1}}}.
    \end{align*}
    where we used that $c(y_h,v_h)=0$ for $y_h \in V_0$ and $v_h \in H_h$.
\end{proof}

\section{Convergence and stability of the fully discrete scheme}
\label{sect:convergence}
In this section, we will apply the abstract theory that we developed in Section~\ref{sect:abstract_analysis} to the Schrödinger equation.
It is easy to verify that the fully discrete problem, as described in Lemma~\ref{equivalent_formulation_lemma}, satisfies
Assumption~\ref{assumption:abstract_setting} with $H_0=\pairltwo$, $H_1=\pairhone$ and 
$H_h=\hat{H}(X_h,Y_h)=\{(v_h,v^*) \in X_h \times \hpfull{1} : \traceJump{v^*} = -\gamma^- v_h \land \gamma^- v^* \in Y_h^{\circ} \}$.
We have already seen that the stabilized Hamiltonian is elliptic if we assume $\alpha > 1 + \norm{\mathcal{V}}_{L^{\infty}(\R)}$. This implies the
inf-sup condition \eqref{eq:abstract_setting_infsup}.

In order to prove that the space $H_h$ inherits some important properties from $X_h$ and $Y_h$ we need the following well-known result:
\begin{proposition}[Extension operator, see \protect{\cite[Chap.~{VI.3}]{stein70}}]
\label{prop:extension_op}
  Let $\Omega \subseteq \R^n$ be a  Lipschitz domain.
  Then there exists a linear operator $\mathcal{E}$ with the following properties:
  \begin{itemize}
    \item for every $k \in \N_0$, ${\mathcal E}: H^k(\Omega) \rightarrow H^k(\R^d)$ 
      is a bounded linear operator:  
        $\displaystyle \norm{\mathcal{E} u}_{H^{k}(\R^d)}\leq C(k,\Omega)\norm{u}_{H^k(\Omega)},$
      \item $\mathcal{E}u$ is an extension of $u$, i.e., 
        $\displaystyle  \mathcal{E} u|_{\Omega} =u.$
  \end{itemize}
\end{proposition}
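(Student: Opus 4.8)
The plan is to follow Stein's classical construction, so in the paper itself I would record this as a citation to \cite[Chap.~VI.3]{stein70} and sketch only the mechanism behind it. First I would reduce the global statement to a local one. Since $\Omega$ is a bounded Lipschitz domain, its boundary $\Gamma$ can be covered by finitely many open balls $\{O_j\}_{j=1}^{J}$ in each of which, after a rigid rotation, $\Omega$ coincides with the region lying strictly above the graph of a Lipschitz function $\varphi_j$. Together with one interior patch $O_0 \subset\subset \Omega$ these cover $\overline{\Omega}$, and I would fix a partition of unity $\{\chi_j\}_{j=0}^{J}$ subordinate to this cover. It then suffices to build a bounded extension operator $\mathcal{E}_j$ for a single special Lipschitz domain of the form $\{(x',x_d): x_d > \varphi(x')\}$ and to glue the pieces by setting $\mathcal{E}u := \sum_j \mathcal{E}_j(\chi_j u)$, the interior piece needing no extension.

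For the model domain the heart of the argument is Stein's reflection-by-averaging formula. For an exterior point $x=(x',x_d)$ with $x_d < \varphi(x')$ one defines
\[
 (\mathcal{E}u)(x', x_d) := \int_1^\infty u\bigl(x', x_d + \lambda\, \delta(x)\bigr)\, \psi(\lambda)\, d\lambda,
\]
where $\delta$ is a \emph{regularized distance} to the boundary, i.e.\ a function smooth on the exterior, comparable to $\operatorname{dist}(\cdot,\Gamma)$, and whose derivatives of order $j$ are controlled by $\operatorname{dist}(\cdot,\Gamma)^{1-j}$, while $\psi$ is a fixed function supported in $[1,\infty)$ obeying the moment conditions $\int_1^\infty \psi\, d\lambda = 1$ and $\int_1^\infty \lambda^k \psi(\lambda)\, d\lambda = 0$ for every $k \geq 1$. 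The factor $\lambda \geq 1$ together with $\delta \approx \operatorname{dist}$ guarantees that the sampled point lies inside $\Omega$, and setting $\mathcal{E}u|_\Omega = u$ yields the extension property $\mathcal{E}u|_\Omega = u$ directly from the construction. The purpose of the vanishing higher moments is precisely that they force a \emph{single} operator $\mathcal{E}$ to be simultaneously bounded on $H^k$ for all $k \in \N_0$, which is the feature that distinguishes Stein's result from naive finite-order reflections.

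I would then verify the bound $\norm{\mathcal{E}u}_{H^{k}(\R^d)} \leq C(k,\Omega)\norm{u}_{H^k(\Omega)}$ by differentiating under the integral sign: each derivative falling on $\delta$ produces a factor controlled by a negative power of $\operatorname{dist}(\cdot,\Gamma)$, but these singular contributions are exactly compensated by the decay of $\psi$ and by the cancellations coming from its moment conditions, after which a change of variables and Minkowski's inequality return the $H^k(\Omega)$ norm of $u$. Matching of traces across $\Gamma$ in the $H^k$ sense is part of what this estimate encodes.

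The hard part is entirely contained in this model-domain estimate, namely making rigorous the interplay between the singular derivatives of the regularized distance $\delta$ and the cancellation supplied by the moments of $\psi$, \emph{uniformly in $k$}; the partition-of-unity gluing and the reduction to a graph domain are routine once the local operator is in hand. Since every one of these steps is carried out in full in \cite[Chap.~VI.3]{stein70}, in the paper I would simply invoke that reference, the sketch above recording the structure that justifies it.
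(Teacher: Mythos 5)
Your proposal is correct and takes essentially the same approach as the paper: the paper gives no proof of its own, stating the result as a citation to \cite[Chap.~VI.3]{stein70}, which is precisely what you conclude one should do. Your sketch of Stein's construction (localization to Lipschitz graph domains, the averaged reflection with the regularized distance $\delta$, and the moment conditions on $\psi$ that make a single operator bounded on $H^k$ for every $k$) is an accurate account of the cited argument.
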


It is well-known that the time evolution of solutions to the Schrödinger equation corresponds to a unitary semigroup,
i.e., the $L^2$-norm of the initial condition is conserved. Since we are only considering a bounded subset of $\R^d$ we cannot hope to
retain that property, but we still have a slightly weaker result for the fully discrete scheme.
Similarly, it is known that the energy $\innerprod{L^2(\R^d)}{\mathbf{H}u (t)}{u(t)}$ is conserved over time. 
Our discrete system also almost retains this property. 

\begin{corollary}
  Let $\mathcal{V}$ be constant in time and bounded. Then
  the sequence of fully discrete solutions of Problem~\ref{problem:fully_discrete} is non-expansive:
  \begin{align*}
    \ltwonormint{\fdu{n}}\leq \ltwonormint{\fdu{0}}, \\
    \honenormint{\fdu{n}}\leq C \honenormint{\fdu{0}}.
  \end{align*}

  In the case of RK-methods that satisfy $\abs{R(\ii t)}=1$ for all $t \in \R$, the damping that appears in the previous inequalities can
  be controlled by adding additional (computable)-terms to get a ``mass'' and ``energy''-conserving scheme, i.e.,
  \begin{align*}
    \ltwonormint{\fdu{n}}^2 + \ltwonormfull{\fdustar{n}}^2&= \ltwonormint{\fdu{0}}^2, \\
    \mathbf{H}(\fdu{n},\fdu{n}) + \mathbf{H}(\fdustar{n},\fdustar{n}) &= \mathbf{H}(\fdu{0},\fdu{0}),
  \end{align*}
  with energy $\mathbf{H}(u,u):=\ltwonormfull{\nabla u}^2 + \ltwoprodfull{\mathcal{V} u}{u}$.
  \begin{proof}    
    We apply the discrete stability lemma (Lemma~\ref{lemma_discrete_stability}) to the equivalent formulation (\ref{equivalent_formulation_lemma_eqn}).
    Since $\fdustar{0}=0$ we directly get the stated results.
  \end{proof}
\end{corollary}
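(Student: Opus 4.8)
The plan is to read the corollary as a direct specialization of the abstract discrete stability result, Lemma~\ref{lemma_discrete_stability}, transported through the equivalent reformulation of Lemma~\ref{equivalent_formulation_lemma}. First I would fix the dictionary between the two settings: with $H_0 = \pairltwo$, $H_1 = \pairhone$ and $H_h = \hat{H}(X_h,Y_h)$, the abstract solution sequence $x_h^n$ becomes the pair $(\fdu{n},\fdustar{n})$ and the abstract stage vector $X_h^n$ becomes $(\fdU{n},\fdUstar{n})$. Comparing the right-hand side of~(\ref{equivalent_formulation_lemma_eqn}) with the abstract data in Assumption~\ref{assumption:abstract_setting}, the forcing term $\blfFp{\fdu{n}\vecrhs}{\fdustar{n}\vecrhs}{\cdot}{\cdot}$ is precisely the contribution $\innerprod{\prodSpace{H_0}}{x_h^n\vecrhs}{\cdot}$, so the external data vanishes, $F_n \equiv 0$. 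As recorded just before the statement, $\blfHStab{\cdot}{\cdot}$ is elliptic because $\alpha > 1 + \norm{\mathcal{V}}_{L^{\infty}(\R^d)}$, so all estimates marked $(*)$ in Lemma~\ref{lemma_discrete_stability} are available.

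For the non-expansiveness, the decisive input is $\fdustar{0} = 0$, built into the construction in Lemma~\ref{equivalent_formulation_lemma}. This makes the initial datum $x_h^0 = (\fdu{0},0)$, so that $\norm{x_h^0}_{H_0} = \ltwonormint{\fdu{0}}$ and $\norm{x_h^0}_{H_1} = \honenormint{\fdu{0}}$. Applying~(\ref{eq:lemma_discrete_stability_l2}) and~(\ref{eq:lemma_discrete_stability_h1}) with $F_j = 0$ then yields $\norm{x_h^n}_{H_0} \le \norm{x_h^0}_{H_0}$ and $\norm{x_h^n}_{H_1} \le C\norm{x_h^0}_{H_1}$. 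Since the $H_0$- and $H_1$-norms of $x_h^n$ split into interior-plus-exterior contributions, I would simply discard the non-negative exterior terms, writing $\norm{x_h^n}_{H_0}^2 = \ltwonormint{\fdu{n}}^2 + \ltwonormfull{\fdustar{n}}^2 \ge \ltwonormint{\fdu{n}}^2$ and likewise in $H_1$, to read off the two stated bounds.

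For the conservation statements I would invoke the last two parts of Lemma~\ref{lemma_discrete_stability}, which hold when $\abs{R(\ii t)} = 1$ for all $t \in \R$ and $F_n = 0$. These give exact conservation $\norm{x_h^n}_{H_0} = \norm{x_h^0}_{H_0}$ and, under ellipticity, $\matH(x_h^n,x_h^n) = \matH(x_h^0,x_h^0)$ in the product-space Hermitian form. Expanding the product form into its interior and exterior pieces and again using $\fdustar{0} = 0$ converts these into the claimed mass identity $\ltwonormint{\fdu{n}}^2 + \ltwonormfull{\fdustar{n}}^2 = \ltwonormint{\fdu{0}}^2$ and the energy identity $\mathbf{H}(\fdu{n},\fdu{n}) + \mathbf{H}(\fdustar{n},\fdustar{n}) = \mathbf{H}(\fdu{0},\fdu{0})$.

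Essentially all the genuine work has already been discharged in Lemmas~\ref{equivalent_formulation_lemma} and~\ref{lemma_discrete_stability}, so the remaining effort is bookkeeping rather than estimation. The main point requiring care is making the identification $x_h^n = (\fdu{n},\fdustar{n})$ with $F_n = 0$ precise and reading the product-space norm and energy correctly as sums of interior and exterior parts. The single fact worth double-checking is that $\fdustar{0} = 0$ on all of $\R^d \setminus \Gamma$, since this is what aligns the abstract initial norm with the purely interior quantities $\ltwonormint{\fdu{0}}$ and $\honenormint{\fdu{0}}$; this is guaranteed by the convention $\fdustar{0} := 0$ underlying the post-processing in~(\ref{definition_ustar_explicit}).
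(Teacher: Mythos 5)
Your proposal is correct and follows exactly the paper's own (one-line) argument: apply Lemma~\ref{lemma_discrete_stability} to the equivalent formulation~(\ref{equivalent_formulation_lemma_eqn}) with $H_0=\pairltwo$, $H_1=\pairhone$, $H_h=\hat{H}(X_h,Y_h)$, $F_n\equiv 0$, and initial datum $(\fdu{0},0)$ thanks to $\fdustar{0}=0$. Your write-up merely makes the identifications and the interior/exterior splitting of the product norms and the energy explicit, which the paper leaves implicit.
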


We are now interested in an estimate for the convergence rate of the fully discrete scheme.
We will again use the equivalent form from Lemma~\ref{equivalent_formulation_lemma} and apply the abstract theory of Section~\ref{sect:abstract_analysis}.
In order to do so, we need to verify Assumption~\ref{assumption:semidiscrete_problem}. 
The pairs $(\sdU{n}, \sdUstar{n})$ and $(\fdU{n},\fdUstar{n})$ satisfy similar equations that 
differ, however, in the test functions, namely, $\hat H(H^1(\Omega),H^{-1/2}(\Gamma))$ and 
$H_h = \hat H(X_h,Y_h)$. 
For $(V_h,V^*) \in \prodSpace{H_h}$ one has $\gamma^-V_h = -\traceJump{V^*}$; furthermore, using 
$\lambda^n = -\normalJump{\sdUstar{n}}$ and $\sdUstar{n}|_{\Omega} = 0$ (cf.~Lemma~\ref{equivalent_formulation_lemma})
we assert by integration by parts 
%
\begin{align}
  \blfB{\sdU{n}}{\sdUstar{n}}{V_h}{V^*}  + \bdryprodp{k \sdl{n}}{ \gamma^- V^*} &= \blfFp{\sdu{n} \,\vecrhs}{\sdustar{n} \,\vecrhs}{V_h}{V^*}
  \quad \forall (V_h,V^*) \in \prodSpace{H_h}.
\end{align}
Thus we are in the setting of Assumption~\ref{assumption:semidiscrete_problem}, if we define 
\begin{align*}
  c\left((u,u_*),(v,v_*) \right):=\bdryprod{ \normalJump{u_*}}{ \gamma^- v^*}.
\end{align*}

In Lemmas~\ref{lemma:abstract_approx_with_projection} and \ref{lemma:quasi_optimality_of_ritz}, the approximation problem is reduced to
the question of best approximation in the space $H_h$. Relating this to the properties of the spaces $X_h$ and $Y_h$ is the subject to the next lemma.
\begin{lemma}
\label{lemma:approximation_ritz_schroed}
  There exists a constant $C > 0$  that depends only on $\Omega$ such that
  for every $\underline{v}=(v,v_*) \in \hpint{1} \times \hpfull{2}$ with $\traceJump{v_*} = -\gamma^{-} v$ 
and $\gamma^- v_* \in Y_h^\circ$,
  the following approximation property holds for  $s=0,1$:
  \begin{align*}
    \inf_{\underline{v}_h \in H_h}{\norm{\underline{v} - \underline{v}_h}_{\mathcal{X}^s}} 
    &\leq C \inf_{x_h \in X_h}{\hpnormint{s}{v - x_h}}, \\
    \inf_{y_h \in V_0}{\norm{c(\underline{v}-y_h, \cdot)}_{H_1'}} &\leq C \inf_{y_h \in Y_h}{ \hpbdrynorm{-1/2}{\normalJump{v_*} -y_h}}.
  \end{align*}
  \begin{proof}
    Let $x_h \in X_h$ be arbitrary, and set $x_*:=v_* + \delta_*$ where $\delta_*=\mathcal{E}(x_h - v)$,
    with the extension operator of Proposition~\ref{prop:extension_op} in $\Omega^+$ and $\delta_* = 0$ in $\Omega$.
    Since $\traceJump{v_*} = -\gamma^{-} v$ and $\gamma^- v_* \in Y_h^\circ$, we get that
    $\underline{x}:=(x_h,x_*) \in H_h$.
    From the continuity of the extension operator $\mathcal{E}$ we get
    $\honenormfull{\delta_*} \leq C \honenormint{v-x_h }$ and $\ltwonormfull{\delta_*} \leq \ltwonormint{v-x_h}$.

    For the difference $\underline{v}-\underline{x}$ we get for $s=0$, $1$:
    \begin{align*}
      \norm{\underline{v}-\underline{x}}^2_{\mathcal{X}^s}&= 
      \norm{v-x_h}^2_{H^s(\Omega)} +  \norm{v_*-v_* -\delta}^2_{\hpfull{s}} \\
      &\leq \norm{v-x_h}^2_{H^s(\Omega)} + C \norm{v - x_h}^2_{H^s(\Omega)}.
    \end{align*}

    We are left with estimating the contribution due to $c(\cdot,\cdot)$.
    Let $(w,w_*) \in H_h$ be arbitrary, and let $\xi_h \in Y_h$ be arbitrary. 
    Since $\gamma^- w_* \in (Y_h)^\circ \subset H^{1/2}(\Gamma)$, we may 
    choose a lifting $y_*$ to the full space such that $\normalJump{y_*}= \xi_h \in Y_h$.
    We get $c((0,y_*),(w,w_*)) = \bdryprod{\normalJump{y_*}}{\gamma^- w_*}=0$, and therefore $(0,y_*) \in V_0$, as defined in Assumption~\ref{assumption:semidiscrete_problem}. Since taking traces is continuous in
   $\hpfull{1}$, we get
   \begin{align*}
     \inf_{y_h \in V_0}\norm{c(\underline{v}-y_h,\cdot)}_{(\pairhone)'}&\stackrel{y_h = (0,y_*)}{\lesssim }
\inf_{\xi_h \in Y_h} \norm{\normalJump{v_*} - \xi_h}_{\hpbdry{-1/2}}.
\qedhere
   \end{align*}
%
  \end{proof}
\end{lemma}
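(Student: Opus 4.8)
The plan is to establish the two estimates independently, in each case building a near-optimal competitor out of a given best approximation of the scalar quantity appearing on the right. For the first estimate I would start from an arbitrary $x_h \in X_h$ and promote it to an admissible pair $\underline{x}=(x_h,x_*) \in H_h$ without spoiling the approximation. Membership in $H_h$ demands both $\traceJump{x_*}=-\gamma^- x_h$ and $\gamma^- x_* \in Y_h^\circ$, so I would set $x_*:=v_*+\delta_*$ and design the correction $\delta_*$ to repair the jump while leaving the interior trace untouched. The natural choice is the Stein extension $\mathcal{E}(x_h-v)$ of Proposition~\ref{prop:extension_op}, restricted to $\Omega^+$ and continued by zero into $\Omega$. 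Since $\delta_*$ vanishes in $\Omega$, its interior trace is zero and $\gamma^- x_*=\gamma^- v_* \in Y_h^\circ$ is inherited directly from $\underline{v}$; since $\mathcal{E}(x_h-v)$ is continuous across $\Gamma$ with interior trace $\gamma^-(x_h-v)$, one computes $\traceJump{\delta_*}=\gamma^-(v-x_h)$, which is exactly what is needed to carry $\traceJump{v_*}=-\gamma^- v$ over to $-\gamma^- x_h$. The error then splits as $\norm{\underline{v}-\underline{x}}_{\mathcal{X}^s}^2=\norm{v-x_h}_{H^s(\Omega)}^2+\norm{\delta_*}_{H^s(\R^d\setminus\Gamma)}^2$, and the continuity of $\mathcal{E}$ controls the second term by $C\norm{v-x_h}_{H^s(\Omega)}^2$; taking the infimum over $x_h$ finishes the cases $s=0,1$.

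For the second estimate I would produce an element of $V_0$ that renders the consistency functional $c(\underline{v}-\cdot,\cdot)$ small. The key structural fact is that every pair $(0,y_*)$ whose normal jump lies in $Y_h$ already belongs to $V_0$: indeed $c((0,y_*),(w,w_*))=\bdryprodp{\normalJump{y_*}}{\gamma^- w_*}$ vanishes for all $(w,w_*)\in H_h$ because $\gamma^- w_* \in Y_h^\circ$. Hence, given any $\xi_h \in Y_h$, I would take a lifting $y_* \in \hpfull{1}$ with $\normalJump{y_*}=\xi_h$ and set $y_h:=(0,y_*)\in V_0$. Testing against an arbitrary $(w,w_*)\in H_1$ gives $c(\underline{v}-y_h,(w,w_*))=\bdryprodp{\normalJump{v_*}-\xi_h}{\gamma^- w_*}$, and the $H^{-1/2}(\Gamma)\times H^{1/2}(\Gamma)$ duality together with continuity of the trace $\gamma^-\colon \hpfull{1}\to\hpbdry{1/2}$ bounds this by $C\,\hpbdrynorm{-1/2}{\normalJump{v_*}-\xi_h}\,\norm{(w,w_*)}_{\mathcal{X}^1}$. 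Optimizing over $\xi_h\in Y_h$ yields the stated bound.

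The step I expect to demand the most care is the first construction: the real content is realizing that the extension must be localized to the exterior and continued by zero inside, so that the annihilator constraint $\gamma^- x_* \in Y_h^\circ$ survives for free while the jump is corrected by precisely the right amount. Once the competitor is shown to lie in $H_h$, the remaining norm bounds are routine consequences of the boundedness of $\mathcal{E}$ and the trace theorem.
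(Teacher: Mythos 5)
Your proposal is correct and follows essentially the same route as the paper's own proof: the same competitor $(x_h, v_*+\mathcal{E}(x_h-v)|_{\Omega^+})$ with the extension continued by zero into $\Omega$, and the same observation that pairs $(0,y_*)$ with $\normalJump{y_*}\in Y_h$ lie in $V_0$, followed by the duality/trace bound. If anything, your verification of the jump condition $\traceJump{\delta_*}=\gamma^-(v-x_h)$ is spelled out in more detail than in the paper, which simply asserts membership in $H_h$.
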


This allows us to give an estimate for the error due to spatial discretization:
\begin{theorem}
  \label{thm_fd_err_est}
  Let $\mathcal{V} \in L^{\infty}(\R^d)$.
  Then, there exists a constant $C > 0$ that depends only on $\mathcal{V}$, $\Omega$, and the Runge-Kutta method 
  (namely, $A$ and $b$),
  such that for all closed subspaces $X_h \subseteq \hpint{1}$, $Y_h \subseteq \hpbdry{-1/2}$,
  for all $n \in \N$, and for all $k >0 $,
  the following estimate holds:
  
  \begin{align*}
    \ltwonormint{ \sdu{n} - \fdu{n}} &\leq C\; k\, \sum_{j=0}^{n-1}{
    \left(
      \inf_{x_h \in \prodSpace{X_h}} \honenormintp{\matH \sdU{j} - x_h} +  \inf_{x_h \in \prodSpace{X_h}} \honenormintp{\sdU{j} - x_h} \right)} \\
  &+  C \,k \; \sum_{j=0}^{n-1}{ \left(
      \inf_{y_h \in \prodSpace{Y_h}} \hpbdrynormp{-1/2}{\partial^+_n \matH \sdU{j} - y_h} +  \inf_{y_h \in \prodSpace{Y_h}} \hpbdrynormp{-1/2}{\partial^+_n \sdU{j} - y_h} 
    \right)}.
  \end{align*}
  If we assume that $k$ and $X_h$ satisfy:
  \begin{align}
    \label{eq:approximation_assumption}
    \inf_{w_h \in X_h}{ \norm{u -w_h}_{L^2(\Omega)}} &\leq C_{approx} k^{1/2} \norm{u}_{H^1(\Omega)},
  \end{align}
  then the estimate holds in the $H^1$-norm (the constant now additionally depends on $C_{approx}$):
  \begin{align*}
    \hpnormint{1}{ \sdu{n} - \fdu{n}} &\leq C\; k\, \sum_{j=0}^{n-1}{
    \left(
      \inf_{x_h \in \prodSpace{X_h}} \honenormintp{\matH \sdU{j} - x_h} +  \inf_{x_h \in \prodSpace{X_h}} \honenormintp{\sdU{j} - x_h} \right)} \\
  &+  C \,k \; \sum_{j=0}^{n-1}{ \left(
      \inf_{y_h \in \prodSpace{Y_h}} \hpbdrynormp{-1/2}{\partial^+_n \matH \sdU{j} - y_h} +  \inf_{y_h \in \prodSpace{Y_h}} \hpbdrynormp{-1/2}{\partial^+_n \sdU{j} - y_h} 
    \right)}.
\end{align*}
  
  \begin{proof}
    We want to apply Lemma~\ref{lemma:abstract_approx_with_projection}. We have already seen that
    we can reduce the approximation requirements of the constrained space $H_h$ to $X_h$ and $Y_h$ via Lemma~\ref{lemma:approximation_ritz_schroed}.
    By \eqref{rk_approx_whole_space_1}, the semi-discrete full-space solutions satisfy
    ${(-\ii A^{-1}+ k \matH)U^n = u_n \vecrhs}$. This means that the definition of $\matH \sdU{n}$ in 
    Lemma~\ref{lemma:abstract_approx_with_projection} coincides with the pointwise application of the 
    Hamilton operator to the semi-discrete functions $\sdU{j}$
    (up to identifying the global function with the pair $(\sdU{j}|_{\Omega}, \sdU{j}|_{\Omega^+})$).
    Using \eqref{eq:error_est_1}, Lemma~\ref{lemma:quasi_optimality_of_ritz} and applying Lemma~\ref{lemma:approximation_ritz_schroed} then gives the stated result.
  \end{proof}
\end{theorem}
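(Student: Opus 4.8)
The plan is to reduce the whole statement to the abstract approximation result of Lemma~\ref{lemma:abstract_approx_with_projection}, whose hypotheses (Assumptions~\ref{assumption:abstract_setting} and \ref{assumption:semidiscrete_problem}) have just been verified for the Schrödinger coupling with $H_0=\pairltwo$, $H_1=\pairhone$, $H_h=\hat{H}(X_h,Y_h)$, and the nonconformity form $c((u,u_*),(v,v_*)):=\bdryprod{\normalJump{u_*}}{\gamma^- v_*}$. First I would record that the semi-discrete pair $(\sdU{n},\sdUstar{n})$ solves exactly \eqref{eq:def:abstract_sd_X_n}, so that the abstract quantity $\matH \sdX{j}:=\ii k^{-1}A^{-1}(\sdX{j}-\sdx{j}\ones)$ coincides, via \eqref{rk_approx_whole_space_1}, with the pointwise Hamiltonian $\mathbf{H}$ applied to $\sdU{j}$ (after identifying the global function with the pair $(\sdU{j}|_\Omega,\sdU{j}|_{\Omega^+})$). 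With this identification, \eqref{eq:error_est_0} immediately gives the $L^2(\Omega)$ bound in terms of $\norm{(I-\prodOp{\Pi_h})\matH \sdX{j}}_{\prodSpace{H_0}}$ and $\norm{(I-\prodOp{\Pi_h})\sdX{j}}_{\prodSpace{H_0}}$, while \eqref{eq:error_est_1} yields the analogous $H^1(\Omega)$ bound, and the outer sum over $j$ together with the factor $k$ is inherited verbatim from those estimates.

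Next I would feed each Ritz-projection error $\norm{(I-\Pi_h)x}$ through the quasi-optimality estimate of Lemma~\ref{lemma:quasi_optimality_of_ritz}, which splits it into an interior best-approximation piece $\inf_{x_h\in X_h}\norm{x-x_h}$ and a consistency piece $\inf_{y_h\in V_0}\norm{c(x-y_h,\cdot)}_{H_h'}$. Lemma~\ref{lemma:approximation_ritz_schroed} then converts these, respectively, into $\inf_{x_h\in X_h}\hpnormint{s}{v-x_h}$ and $\inf_{y_h\in Y_h}\hpbdrynorm{-1/2}{\normalJump{v_*}-y_h}$. Applying this once with $x=\sdX{j}$ and once with $x=\matH \sdX{j}$ produces exactly the four families of infima in the statement, once I use $\sdUstar{j}|_\Omega=0$ together with $\normalJump{\sdUstar{j}}=-\partial_n^+\sdU{j}$ (and the analogue for $\matH \sdU{j}$) to rewrite the normal-jump best-approximation terms as the stated $\partial_n^+$ terms in $\hpbdrynorm{-1/2}{\cdot}$.

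For the $H^1(\Omega)$ estimate I would additionally invoke the coupling hypothesis \eqref{eq:approximation_assumption}, which is precisely the instance of \eqref{eq:lemma:abstract_approx_with_projection-100} needed to absorb the $k^{-1/2}$ factor appearing in \eqref{eq:lemma_discrete_stability_h1_all_f} and thus to pass from the $H_0$- to the $H_1$-version; the ellipticity of $\blfHStab{\cdot}{\cdot}$, already guaranteed by $\alpha>1+\norm{\mathcal{V}}_{L^{\infty}(\R^d)}$, licenses the $(*)$-marked steps. The initial-data contributions $\norm{\sdx{0}-\fdx{0}}$ and $\norm{\sdx{0}-\Pi_h\sdx{0}}$ drop out because both schemes start from the common datum $(\sdu{0},0)$, which lies in $H_h$ by $\operatorname{supp}(\sdu{0})\subseteq\Omega$.

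The main obstacle I anticipate is not any single estimate but the careful bookkeeping of the nonconforming coupling: one must check that $c$ is exactly the discrepancy between the discrete and continuous constrained test spaces $\hat{H}(X_h,Y_h)$ and $\hat{H}(\hpint{1},\hpbdry{-1/2})$, and that the semi-discrete stages carry enough exterior regularity (namely $\sdUstar{j}\in\prodHpFull{2}$, so that the normal traces $\partial_n^+\sdU{j}$ and $\partial_n^+\matH \sdU{j}$ demanded by Lemma~\ref{lemma:approximation_ritz_schroed} are well defined). Keeping every constant independent of $k$, of the spaces $X_h,Y_h$, and of the terminal time $T$ throughout this chain is the remaining point requiring attention.
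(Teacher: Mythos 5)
Your proposal is correct and follows essentially the same route as the paper's proof: apply the abstract estimates \eqref{eq:error_est_0} and \eqref{eq:error_est_1} of Lemma~\ref{lemma:abstract_approx_with_projection} with $H_0=\pairltwo$, $H_1=\pairhone$, $H_h=\hat{H}(X_h,Y_h)$, identify $\matH\sdU{j}$ with the pointwise Hamiltonian via \eqref{rk_approx_whole_space_1}, and then pass through Lemma~\ref{lemma:quasi_optimality_of_ritz} and Lemma~\ref{lemma:approximation_ritz_schroed}, invoking \eqref{eq:approximation_assumption} only for the $H^1$ bound. You in fact spell out details the paper leaves implicit (the identification of $c$ as the nonconformity defect, the rewriting of $\normalJump{\sdUstar{j}}$ as $-\partial_n^+\sdU{j}$, and the vanishing of the initial-data terms), and the whole argument matches the paper's.
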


\section{The semi-discrete problem}
\label{sect:semi_discrete}
In the last theorem we showed that our fully discrete scheme gives quasi-optimal convergence to 
the semi-discrete solution. In
order to estimate the error for the exact solution we will need some properties of the semi-discrete problem.
We only consider the simplest case of potentials that are constant in time, since they allow us to use the theory of $C_0$-semigroups.

First we show some approximation properties:
\begin{theorem}
  \label{sd_approx}
  Assume that a Runge-Kutta method of order $q$ is used. 
  Let $\sdu{0}$ be sufficiently smooth. Then the following estimates hold
  for all $n k \leq T$:
  \begin{align*}
    \ltwonorm{\sdu{n}- u(n k)} &\leq C T k^q \ltwonorm{\mathbf{H}^{q+1} \sdu{0}} , \\
    \honenorm{\sdu{n}- u(n k)} &\leq C T k^q \left(\ltwonorm{\mathbf{H}^{q+2} \sdu{0}} + \ltwonorm{\mathbf{H}^{q+1} \sdu{0}} \right).
  \end{align*}
  \begin{proof}
    We use some results from the theory of rational approximations of semigroups. \cite[Theorem 4]{brenner_thomee_rat_approx_semigroup} states that:
    \begin{align*}
      \ltwonorm{\sdu{n}-u(n k)} &\leq C T k^q \ltwonorm{\mathbf{H}^{q+1} \sdu{0}}.
    \end{align*}
    Since $\mathbf{H}$ commutes with both the time evolution and the application of the Runge-Kutta method, this also gives
    \begin{align*}
      \ltwonorm{\mathbf{H}\sdu{n}- \mathbf{H}u(n k)} &\leq C T k^q \ltwonorm{\mathbf{H}^{q+2} \sdu{0} }.
    \end{align*}
    Thus it is easy to see that
    \begin{align*}
      \honenorm{\sdu{n}- u(n k)} &\leq C T k^q \left(\ltwonorm{\mathbf{H}^{q+2} \sdu{0}} + \ltwonorm{\mathbf{H}^{q+1} \sdu{0}} \right).
\qedhere
    \end{align*}
  \end{proof}
\end{theorem}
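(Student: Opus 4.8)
The plan is to exploit the fact that $\mathbf{H}=-\laplace+\mathcal{V}$ is self-adjoint on $L^2(\R^d)$ with domain $H^2(\R^d)$ (the potential being real-valued and bounded), so that $-\ii\mathbf{H}$ generates a unitary $C_0$-group and the exact solution is $u(t)=e^{-\ii t\mathbf{H}}\sdu{0}$. After eliminating the stage vector $\sdU{n}$ from \eqref{rk_approx_whole_space_1}, the Runge-Kutta step is the one-step recursion $\sdu{n+1}=R(-\ii k\mathbf{H})\sdu{n}$, where $R$ is the stability function \eqref{def_rk_stability_fkt} applied to $\mathbf{H}$ via the spectral calculus; hence $\sdu{n}=R(-\ii k\mathbf{H})^n\sdu{0}$. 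The error is therefore an expression purely in the functional calculus of $\mathbf{H}$, namely $\sdu{n}-u(nk)=\bigl[R(-\ii k\mathbf{H})^n-e^{-\ii nk\mathbf{H}}\bigr]\sdu{0}$.

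First I would establish the $L^2$ estimate by invoking the rational-approximation-of-semigroups theory of Brenner and Thom\'ee. Since the method is A-stable of order $q$, its stability function $R$ is an $A$-acceptable rational approximation to the exponential of order $q$, and the generator $-\ii\mathbf{H}$ has spectrum on the imaginary axis. Then \cite[Theorem~4]{brenner_thomee_rat_approx_semigroup} yields directly $\ltwonorm{\sdu{n}-u(nk)}\leq CTk^q\ltwonorm{\mathbf{H}^{q+1}\sdu{0}}$.

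Next I would pass to the $H^1$ estimate. Since both $R(-\ii k\mathbf{H})^n$ and $e^{-\ii nk\mathbf{H}}$ are functions of $\mathbf{H}$, they commute with $\mathbf{H}$; applying the already-established $L^2$ bound with initial datum $\mathbf{H}\sdu{0}$ in place of $\sdu{0}$ gives $\ltwonorm{\mathbf{H}\sdu{n}-\mathbf{H}u(nk)}\leq CTk^q\ltwonorm{\mathbf{H}^{q+2}\sdu{0}}$. Finally, the $H^1(\R^d)$-norm is controlled by the graph norm of $\mathbf{H}$: from $\honeseminorm{v}^2=\ltwoprod{\mathbf{H}v}{v}-\ltwoprod{\mathcal{V}v}{v}$ and the boundedness of $\mathcal{V}$ one has $\honenorm{v}\lesssim\ltwonorm{\mathbf{H}v}+\ltwonorm{v}$, and combining the two $L^2$ estimates delivers the claimed $H^1$ bound.

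The routine parts here are the reduction to the functional calculus and the elementary $H^1$-versus-graph-norm comparison; the substance is entirely contained in the cited theorem. The real difficulty that theorem resolves --- and the reason one must work spectrally rather than applying a black-box Runge-Kutta convergence result --- is the order-reduction phenomenon: a direct error analysis for the stiff, oscillatory operator $-\ii\mathbf{H}$ would yield only the stage order. Recovering the full classical order $q$ requires the spectral theorem to diagonalize $\mathbf{H}$ and reduce the matter to the scalar estimate $\abs{R(-\ii k\xi)^n-e^{-\ii nk\xi}}\lesssim(k\abs{\xi})^{q+1}$ for $\xi\in\sigma(\mathbf{H})\subseteq\R$, together with the uniform bound $\abs{R(-\ii k\xi)}\leq 1$ from A-stability; integrating these against the spectral measure while balancing low and high frequencies is what produces the factor $Tk^q\ltwonorm{\mathbf{H}^{q+1}\sdu{0}}$.
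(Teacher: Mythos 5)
Your proposal is correct and follows essentially the same route as the paper's proof: cite \cite[Theorem~4]{brenner_thomee_rat_approx_semigroup} for the $L^2$ bound, use that $\mathbf{H}$ commutes with both the exact evolution $e^{-\ii t\mathbf{H}}$ and the Runge-Kutta map $R(-\ii k\mathbf{H})^n$ to apply the same bound to $\mathbf{H}\sdu{0}$, and then control $\honenorm{\cdot}$ by the graph norm of $\mathbf{H}$ using the boundedness of $\mathcal{V}$. The additional spectral-calculus framing you supply (self-adjointness, the reduction to the scalar estimate inside the cited theorem) is accurate elaboration of what the paper leaves implicit, not a different argument.
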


Since the convergence rates depend on the approximation quality for the semi-discrete stages we need some 
{\sl a priori} estimates.
\begin{lemma}
  \label{sd_apriori}
    Let $x \mapsto \mathcal{V}(x)$ be sufficiently smooth. Let
    $\sdu{0} \in H^{s}(\R^d)$ for some $s \in \R$, $s\ge 0$. Then there exists a constant $C_s$ that only depends on $\mathcal{V}$ and $s$ such that    
    \begin{align*}
      \hpnorm{s}{\sdu{n}} &\leq C_s \hpnorm{s}{\sdu{0}} 
    \end{align*}
    \begin{proof}
      Denote by $R(\ii k \mathbf{H})^n$ the solution operator
      $\sdu{0} \mapsto \sdu{n}$. We use that the time stepping commutes with $\mathbf{H}$. Therefore we get for $\ell \in \N$, $\ell \geq s/2$:
      \begin{align*}
        \mathbf{H}^{\ell} \sdu{n} &= \mathbf{H}^{\ell} R(\ii k \mathbf{H})^n \sdu{0} = R(\ii k \mathbf{H})^n \mathbf{H}^{\ell} \sdu{0}.
      \end{align*}
      Lemma~\ref{eq:lemma_discrete_stability_l2} gives that $\norm{R(\ii k \mathbf{H})^n}_{L^2(\R^d)  \to L^2(\R^d)} 
\leq 1$, and therefore,
      as long as $\sdu{0}$ is smooth enough that $\sdu{n}$ is smooth as well,
      and the norms are uniformly bounded by $\ltwonorm{\mathbf{H}^l \sdu{n}} \leq \ltwonorm{\mathbf{H}^l \sdu{0}}$. Since the potential $\mathcal{V}$ is
      assumed to be smooth we can estimate the norm of $-\laplace^l \sdu{n}$ by $\norm{\mathbf{H}^l \sdu{n}} + \text{lower order terms}$.
      Since we are working on the full space $\R^d$,  we can use Fourier techniques to bound the full $H^{2l}$ norm by $\ltwonorm{-\laplace^l \sdu{n}}$.
      This gives that the operator $R(\ii k \mathbf{H})^n$ is bounded in $L^2(\R^d) \to L^2(\R^d)$ and also in $H^{2l}(\R^d) \to H^{2l}(\R^d)$, uniformly
      with respect to $n$.
      By interpolation, we also get the uniform bound in $H^s(\R^d)$.
    \end{proof}
\end{lemma}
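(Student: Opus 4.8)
The plan is to exploit that the continuous-in-space Runge-Kutta step is a rational function of the self-adjoint Hamiltonian $\mathbf{H}=-\laplace+\mathcal{V}$, which lets me read off every estimate from the functional calculus. First I would write the solution operator $\sdu{0}\mapsto\sdu{n}$ as $R(\ii k \mathbf{H})^n$, where $R$ is the stability function from \eqref{def_rk_stability_fkt}; this follows by eliminating the stage vector from \eqref{rk_approx_whole_space_1} exactly as in the scalar reduction that defines $R$. Since $\mathcal{V}$ is real valued and bounded, $\mathbf{H}$ is self-adjoint on $L^2(\R^d)$ with domain $H^2(\R^d)$, so its spectrum is real. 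By the spectral calculus, $\norm{R(\ii k \mathbf{H})}_{L^2(\R^d)\to L^2(\R^d)}=\sup_{\lambda\in\sigma(\mathbf{H})}\abs{R(\ii k \lambda)}\le\sup_{t\in\R}\abs{R(\ii t)}\le 1$, the last inequality being precisely the A-stability of Assumption~\ref{assumption:rk_method}. This settles the case $s=0$ with constant $1$.

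For even integers $s=2\ell$ I would use that $R(\ii k \mathbf{H})$ commutes with $\mathbf{H}$ (both are functions of $\mathbf{H}$ in the same calculus), whence
\begin{align*}
  \mathbf{H}^{\ell}\sdu{n}=R(\ii k \mathbf{H})^n\,\mathbf{H}^{\ell}\sdu{0},
\end{align*}
and therefore $\ltwonorm{\mathbf{H}^{\ell}\sdu{n}}\le\ltwonorm{\mathbf{H}^{\ell}\sdu{0}}$ by the $L^2$-bound just obtained. It then remains to convert this control of $\mathbf{H}^{\ell}$ into control of the full $H^{2\ell}$-norm. On $\R^d$ one has by Plancherel $\hpnorm{2\ell}{v}\sim\ltwonorm{(-\laplace)^{\ell}v}+\ltwonorm{v}$, while expanding $\mathbf{H}^{\ell}=(-\laplace+\mathcal{V})^{\ell}$ shows $(-\laplace)^{\ell}v=\mathbf{H}^{\ell}v+(\text{terms of order}\le 2\ell-1\text{ in }v)$, the lower-order terms carrying coefficients built from $\mathcal{V}$ and its derivatives. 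Bounding those terms by $\hpnorm{2\ell-1}{v}$ and absorbing them via the interpolation inequality $\hpnorm{2\ell-1}{v}\le\varepsilon\hpnorm{2\ell}{v}+C_\varepsilon\ltwonorm{v}$ yields the elliptic estimate $\hpnorm{2\ell}{v}\lesssim\ltwonorm{\mathbf{H}^{\ell}v}+\ltwonorm{v}$. Applying this to $v=\sdu{n}$ and combining with the two $L^2$-bounds gives $\hpnorm{2\ell}{\sdu{n}}\lesssim\ltwonorm{\mathbf{H}^{\ell}\sdu{0}}+\ltwonorm{\sdu{0}}\lesssim\hpnorm{2\ell}{\sdu{0}}$, the last step being the boundedness of the order-$2\ell$ differential operator $\mathbf{H}^{\ell}$ on $H^{2\ell}(\R^d)$.

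Finally, for general non-integer $s\ge 0$ I would pick $\ell\in\N$ with $2\ell\ge s$ and interpolate the uniform bounds on $H^0(\R^d)=L^2(\R^d)$ and $H^{2\ell}(\R^d)$ just established, using $[H^0,H^{2\ell}]_{\theta}=H^{s}$ with $\theta=s/(2\ell)$. Since the operator norm of $R(\ii k \mathbf{H})^n$ is bounded on both endpoint spaces by a constant that depends only on $\mathcal{V}$ and $s$ and is uniform in $n$ and $k$, interpolation produces the same uniform bound on $H^s(\R^d)$, which is the claim.

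The main obstacle is the elliptic regularity conversion in the second step: the clean Fourier identity only handles the principal part $(-\laplace)^{\ell}$, and one must track the commutator and lower-order terms generated by expanding $(-\laplace+\mathcal{V})^{\ell}$. This is exactly where the smoothness of $\mathcal{V}$ is consumed — sufficiently many derivatives of $\mathcal{V}$ must be bounded so that the lower-order contributions can be controlled in $H^{2\ell-1}(\R^d)$ and then absorbed into the top-order part; everything else is mechanical, following from self-adjointness, A-stability, and interpolation.
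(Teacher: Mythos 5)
Your proposal is correct and follows essentially the same route as the paper: factor the solution operator as $R(\ii k \mathbf{H})^n$, commute it with $\mathbf{H}^\ell$, convert $L^2$-control of $\mathbf{H}^\ell \sdu{n}$ into the $H^{2\ell}$-norm via elliptic regularity on $\R^d$ (consuming smoothness of $\mathcal{V}$), and interpolate between $L^2$ and $H^{2\ell}$ for general $s$. The only cosmetic difference is that you establish the $L^2$-contraction directly from the spectral theorem and A-stability, whereas the paper cites its discrete stability lemma (Lemma~\ref{lemma_discrete_stability}), whose proof rests on exactly the same spectral-calculus argument; your fleshed-out treatment of the lower-order commutator terms in expanding $(-\laplace+\mathcal{V})^{\ell}$ is precisely what the paper compresses into ``$\norm{\mathbf{H}^l \sdu{n}} + \text{lower order terms}$.''
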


We need the smoothness of the internal stages. Since we already have smoothness of the semi-discrete solutions, and thus the
right-hand side of the defining equation of the stage vectors, this is a simple consequence of elliptic regularity:
\begin{corollary}
  \label{sd_stages_apriori}
  Let $\mathcal{V}$ be sufficiently smooth. Let
  $\sdu{0} \in H^{s}(\R^d)$ for some $s \in \R$, $s \ge 0$. Then there exists a constant $C>0$ that 
  depends only on $\mathcal{V}$ and $s$, such that
  \begin{align*}
    \hpnorm{s}{\sdU{n}} \leq C \hpnorm{s}{\sdu{0}}.
  \end{align*}
  \begin{proof}
    For $\ell \in \N$, $\ell \geq s/2$, $\matH^\ell \sdU{n}$ solves the equation
    \begin{align*}
      \left( -\ii A^{-1}+ k \matH \right) \matH^{\ell} \sdU{n} &= \mathbf{H}^{\ell} \sdu{n} \vecrhs.
    \end{align*}
    By Lemma~\ref{lemma_solve_matrix_eqn} and Lemma~\ref{sd_apriori} we can bound the $L^2$ norms as
    \begin{align*}
      \ltwonorm{\matH^{\ell} \sdU{n}} &\leq C \ltwonorm{\mathbf{H}^{\ell} \sdu{n}} \leq C \hpnorm{2\ell}{\sdu{0}}.
    \end{align*}
    This allows us to estimate, again assuming smoothness of the potential, the full $H^{2\ell}$ norm and via interpolation, the $H^{s}$ norm.
  \end{proof}
\end{corollary}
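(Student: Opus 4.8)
The plan is to combine two structural facts. First, the scalar Hamiltonian $\mathbf{H}$ commutes with the Runge-Kutta time stepping (as already exploited in Lemma~\ref{sd_apriori}), and since $\matH=\operatorname{diag}(\mathbf{H},\dots,\mathbf{H})$ commutes both with multiplication by the constant matrix $A^{-1}$ and with itself, it commutes with the whole stage operator $-\ii A^{-1}+k\matH$. Second, on the full space $\R^d$ the operator $\mathbf{H}^\ell=(-\laplace+\mathcal{V})^\ell$ controls the full Sobolev norm $H^{2\ell}$ up to absorbable lower-order terms, provided $\mathcal{V}$ is smooth. Accordingly, I would first differentiate the defining stage equation $(-\ii A^{-1}+k\matH)\sdU{n}=\sdu{n}\vecrhs$: applying $\matH^\ell$ shows that $\matH^\ell\sdU{n}$ solves the same matrix equation with right-hand side $\mathbf{H}^\ell\sdu{n}\,\vecrhs$.

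Next I would invoke the solvability estimate for the stage equation (Lemma~\ref{lemma_solve_matrix_eqn}), whose $L^2\to L^2$ bound is uniform in $k$; this uniformity is exactly where $\sigma(A)\subset\C_+$ from Lemma~\ref{rk_spectrum_a} enters, making the sesquilinear form behind the stage equation coercive independently of $k$. This yields $\ltwonorm{\matH^\ell\sdU{n}}\le C\ltwonorm{\mathbf{H}^\ell\sdu{n}}$. Combining this with the a priori stability of the semi-discrete solution from Lemma~\ref{sd_apriori}, namely $\ltwonorm{\mathbf{H}^\ell\sdu{n}}\le\ltwonorm{\mathbf{H}^\ell\sdu{0}}\le C\hpnorm{2\ell}{\sdu{0}}$ (the last step using smoothness of $\mathcal{V}$ together with Fourier estimates on $\R^d$), bounds $\ltwonorm{\matH^\ell\sdU{n}}$ in terms of the $H^{2\ell}$ norm of the data, uniformly in $n$.

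Then I would convert this $L^2$ bound on $\matH^\ell\sdU{n}$ into an $H^{2\ell}$ bound on $\sdU{n}$ itself by elliptic regularity on the whole space: writing $\mathbf{H}^\ell=(-\laplace)^\ell+(\text{lower order, smooth coefficients})$, one estimates the top-order term $\ltwonorm{(-\laplace)^\ell\sdU{n}}$ by $\ltwonorm{\mathbf{H}^\ell\sdU{n}}$ plus lower-order contributions that are absorbed, and uses that on $\R^d$ the full $H^{2\ell}$ norm is controlled by $\ltwonorm{(-\laplace)^\ell\,\cdot\,}$ plus the $L^2$ norm via the Fourier transform. This proves the estimate for even integers $s=2\ell$, and the case $s=0$ is the plain $L^2$ bound with $\ell=0$. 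Finally, since the map $\sdu{0}\mapsto\sdU{n}$ is linear and, by the above, bounded both $L^2(\R^d)\to L^2(\R^d)$ and $H^{2\ell}(\R^d)\to H^{2\ell}(\R^d)$ uniformly in $n$, interpolation between these two endpoints gives the bound for all real $s\in[0,2\ell]$; choosing $\ell\ge s/2$ covers every $s\ge 0$.

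I expect the main obstacle to be the two places where smoothness of $\mathcal{V}$ and $k$-uniformity are genuinely used: the $k$-independence of the $L^2$ estimate in Lemma~\ref{lemma_solve_matrix_eqn}, and the clean splitting of $\mathbf{H}^\ell$ into its principal part $(-\laplace)^\ell$ plus lower-order terms whose contribution can be absorbed. The latter is a standard but slightly tedious elliptic-regularity bootstrap on $\R^d$, which is why I would appeal to Fourier estimates rather than to explicit commutator identities.
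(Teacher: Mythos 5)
Your proposal is correct and follows essentially the same route as the paper's proof: apply $\matH^\ell$ to the stage equation, use Lemma~\ref{lemma_solve_matrix_eqn} (with the spectral condition guaranteed by Lemma~\ref{rk_spectrum_a}) together with Lemma~\ref{sd_apriori} for the $L^2$ bound, then recover the full $H^{2\ell}$ norm via smoothness of $\mathcal{V}$ and Fourier-based elliptic regularity, and conclude by interpolation. You merely spell out the details (commutation, $k$-uniformity, the interpolation endpoints) that the paper leaves implicit.
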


\section{Full error estimate}
\label{sec:full_error_est}
All that remains is to estimate the error between the fully discrete approximation and the exact solution.
We assume $\Gamma$ to be piecewise smooth and write 
$\hppwbdry{s}$ for the space of functions that are  in $H^{s}(\Gamma_i)$ for each boundary piece $\Gamma_i$
 (see \cite[Definition 4.1.48]{book_sauter_schwab}). The convergence of the fully discrete scheme is
summarized in the following theorem.
\begin{theorem}
\label{thm:full_error_est}
  Let $\Gamma$ be piecewise smooth,
  and denote by $q$ the order of the Runge-Kutta method used.  
  Assume the following approximation properties:
  \begin{subequations}
    \label{eq:approx_spaces_assumption}
    \begin{align}
      \inf_{x_h \in X_h} \hpnormint{s}{ u - x_h} &\leq C h_1^{p_1 + 1-s} \hpnormint{p_1+1}{u}  \quad \quad \forall u \in \hpint{p_1+1}, \\
      \inf_{y_h \in Y_h} \hpbdrynorm{-1/2}{ \lambda - y_h} &\leq C h_0^{p_0 + 3/2} \hppwbdrynorm{p_0+1}{\lambda} \quad \quad \forall \lambda \in \hppwbdry{p_0 +1},
    \end{align}
  \end{subequations}
  for $s \in \{0,1\}$ and parameters $h_0>0$, $h_1>0$ and $p_0$, $p_1 \in \N_0$, with constants that depend only on $\Omega$ and $p_0,p_1$.

  Let $\sdu{0} \in H^{\max(p_1+1,p_0+5/2)}(\R^d)$ and let $\mathcal{V}$ be sufficiently smooth (i.e., such that the 
  semi-discrete sequences satisfy 
  $\sdU{n}, \matH{\sdU{n}} \in \prodSpace{H^{p_1+1}(\R^d)}$ and $\partial_n^- \sdU{n}, \partial_n^- \matH \sdU{n} \in \prodSpace{\hppwbdry{p_0+1}}$,
  see Corollary~\ref{sd_stages_apriori}).
  Then there exists a constant depending on $\Omega$, the Runge-Kutta method (i.e., $A$ and $b$), $\mathcal{V}$,
  $p_0$, $p_1$ and $\sdu{0}$, but not on $k$, $n$, $h$, or $T$ such that:
  \begin{align*}
    \ltwonormint{\fdu{n}-u(n k)}\leq C T \left(   h_{1}^{p_1} +   h_{0}^{p_0+3/2} +  k^q \right). 
  \end{align*}
  If we assume that the approximation assumption $\inf_{w_h \in X_h}{ \norm{u -w_h}_{L^2(\Omega)}} \leq C_{approx} k^{1/2} \norm{u}_{H^1(\Omega)},$
  i.e., $h \lesssim k^{1/2}$ (see \eqref{eq:approximation_assumption}) holds, then
  \begin{align*}
    \honenorm{\fdu{n}-u(n k)}\leq C T \left(   h_{1}^{p_1} +   h_{0}^{p_0+3/2} +  k^q \right).
  \end{align*}
  \begin{proof}
    We only show the $H^1$ bound, the $L^2$ one follows along the same lines. We use the triangle inequality to get:
    \begin{align*}
      \honenormint{\fdu{n}-u(n k)} &\leq \honenormint{\fdu{n}-\sdu{n}} + \honenormint{\sdu{n}-u(n k)} .
    \end{align*}
    The first term can be estimated by Theorem~\ref{thm_fd_err_est}. 
    Using the regularity results and the approximation properties from the finite element spaces we get
    \begin{align*}
      \honenormint{\fdu{n}-\sdu{n}}\leq C T  \left( h^{p_1}  +   h^{p_0+3/2} \right),
    \end{align*}
    where the constants depend on $\sdu{0}$ but not on $n$ or $k$.
    The second term can be controlled via the approximation property of the semi-discrete solution from Theorem~\ref{sd_approx}:
    \begin{align*}
      \honenormint{\sdu{n}-u(n k)}&\leq C T k^q.
\qedhere
    \end{align*}
  \end{proof}
\end{theorem}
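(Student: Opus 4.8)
The plan is to split the total error by the triangle inequality into the spatial (FEM--BEM) error between the fully discrete and the semi-discrete approximation, and the temporal (Runge--Kutta) error between the semi-discrete approximation and the exact solution:
\begin{align*}
  \honenormint{\fdu{n}-u(n k)} \leq \honenormint{\fdu{n}-\sdu{n}} + \honenormint{\sdu{n}-u(n k)}.
\end{align*}
These two contributions are then controlled by the two convergence results already at our disposal: Theorem~\ref{thm_fd_err_est} for the first and Theorem~\ref{sd_approx} for the second.

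For the first term I would apply Theorem~\ref{thm_fd_err_est}, whose right-hand side is the prefactor $k\sum_{j=0}^{n-1}$ multiplying best-approximation errors of the semi-discrete stages $\sdU{j}$ and $\matH\sdU{j}$ in $\prodSpace{X_h}$ (measured in $H^1(\Omega)$) together with those of the normal traces $\partial_n^-\sdU{j}$, $\partial_n^-\matH\sdU{j}$ in $\prodSpace{Y_h}$ (measured in $H^{-1/2}(\Gamma)$). Into each infimum I insert the approximation hypotheses \eqref{eq:approx_spaces_assumption}, using $s=1$ for the volume terms, which turns them into $h_1^{p_1}$ times the volume norm $\hpnorm{p_1+1}{\cdot}$ and $h_0^{p_0+3/2}$ times the boundary norm $\hppwbdrynorm{p_0+1}{\cdot}$ of the respective stage. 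The decisive point is that these Sobolev norms are bounded \emph{uniformly in $j$ and $k$}: Lemma~\ref{sd_apriori} and Corollary~\ref{sd_stages_apriori} give $\hpnorm{p_1+1}{\sdU{j}},\ \hpnorm{p_1+1}{\matH\sdU{j}} \lesssim \hpnorm{p_1+1}{\sdu{0}}$, where the factor $\matH$ is harmless because on the stages $\matH\sdU{j}=\ii k^{-1}A^{-1}(\sdU{j}-\sdu{j}\ones)$ is only a rescaled combination of $\sdU{j}$ and $\sdu{j}$ and hence as regular as $\sdU{j}$ itself; the trace theorem then bounds $\hppwbdrynorm{p_0+1}{\partial_n^-\sdU{j}}$ and $\hppwbdrynorm{p_0+1}{\partial_n^-\matH\sdU{j}}$ by $\hpnorm{p_0+5/2}{\sdu{0}}$. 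These are exactly the regularities demanded of $\sdu{0}$, which is what fixes the exponent $\max(p_1+1,p_0+5/2)$ in the hypothesis. Since every summand is now independent of $j$, the prefactor collapses via $nk\le T$, yielding $\honenormint{\fdu{n}-\sdu{n}} \lesssim T\,(h_1^{p_1}+h_0^{p_0+3/2})$. For the $L^2$ bound the argument is word-for-word the same — Theorem~\ref{thm_fd_err_est} already measures the interior best approximation in $H^1(\Omega)$ even for the $L^2$ estimate, so no additional power of $h_1$ is gained and the identical rate results — and it does not require the coupling condition \eqref{eq:approximation_assumption}, whereas the $H^1$ bound inherits that condition directly from Theorem~\ref{thm_fd_err_est}.

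The second term is immediate from Theorem~\ref{sd_approx}: for a Runge--Kutta method of order $q$ it gives $\honenorm{\sdu{n}-u(n k)} \leq C T k^q\big(\ltwonorm{\mathbf{H}^{q+2}\sdu{0}}+\ltwonorm{\mathbf{H}^{q+1}\sdu{0}}\big)$, whose right-hand side is finite under the standing smoothness assumption on $\sdu{0}$. Adding the two contributions produces the asserted estimate $C T\,(h_1^{p_1}+h_0^{p_0+3/2}+k^q)$. I expect the one delicate point — and hence the main obstacle — to be the uniform-in-$(j,k)$ control of the stage regularity in the first term: one must check that the a priori estimates genuinely bound the \emph{full} Sobolev norms produced by \eqref{eq:approx_spaces_assumption} (and not merely the $L^2$ norms), and that the loss of $3/2$ derivatives in the normal trace on $\Gamma$ is compensated exactly by the exponent $p_0+5/2$ in the hypothesis on $\sdu{0}$. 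Once this bookkeeping is in place, the factor $T$ and the three convergence rates drop out immediately.
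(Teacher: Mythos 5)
Your proposal is essentially the paper's own proof: the same triangle-inequality decomposition, with Theorem~\ref{thm_fd_err_est} combined with the approximation hypotheses \eqref{eq:approx_spaces_assumption} and the a priori bounds of Lemma~\ref{sd_apriori} and Corollary~\ref{sd_stages_apriori} for the spatial term, and Theorem~\ref{sd_approx} for the temporal term. One caveat: your side-justification that $\matH \sdU{j}$ is ``as regular as $\sdU{j}$'' via the identity $\matH \sdU{j} = \ii k^{-1} A^{-1}\left(\sdU{j} - \sdu{j}\ones\right)$ is not uniform in $k$ (the factor $k^{-1}$ would destroy the prefactor $k\sum_{j=0}^{n-1}$); the uniform bound instead follows because $\mathbf{H}$ commutes with the time stepping, so $\matH\sdU{j}$ satisfies Corollary~\ref{sd_stages_apriori} with initial datum $\mathbf{H}\sdu{0}$ at the cost of two extra derivatives of $\sdu{0}$ --- which is exactly what the theorem's assumption $\matH\sdU{n} \in \prodSpace{H^{p_1+1}(\R^d)}$, $\partial_n^-\matH\sdU{n} \in \prodSpace{\hppwbdry{p_0+1}}$ (the ``$\mathcal{V}$ sufficiently smooth'' clause) supplies, rather than the exponent $\max(p_1+1,p_0+5/2)$ on $\sdu{0}$ alone, as your bookkeeping suggests.
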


\begin{remark}
  The assumptions on the FEM/BEM spaces of \eqref{eq:approx_spaces_assumption} are
  satisfied, for example, for standard continuous piecewise polynomial discretizations of degree $p_1$ to discretize $X_h$ on a quasiuniform mesh and
  discontinuous polynomial boundary elements of degree $p_0$ to discretize $Y_h$  (see \cite[Theorem 4.3.20,Theorem 4.3.22]{book_sauter_schwab}).  
\eremk
\end{remark}

\subsection{A better $H^1$ and $H^{-1/2}$-estimate}
\label{sect:better_h1_est}
The  requirement on the mesh size for the $H^1$-estimate 
in Theorem~\ref{thm:full_error_est} is somewhat artificial. In order to get rid of it,
we first bound a sequence of finite difference quotients of the spatial discretization error in the $L^2$-norm,
and then use the definition of the stage vectors to leverage this ``time-regularity'' for stronger spatial norms.
\begin{lemma}
\label{lemma:time_stepping_difference}
Let $\fdX{n}$, $\fdx{n}$, and $F^n$ be defined as in Assumption~\ref{assumption:abstract_setting} and assume $\fdx{0}=0$.
  Consider the sequences $y^0:=0$, $Y^n:=k^{-1}A^{-1}\left( X^n - x^{n}\ones\right)$  and $y^{n+1}:=R(\infty) y_n + b^{T} A^{-1} Y^n$.
  Let $\left(\Theta^n\right)_{n \in \N_0} \subset \prodSpace{H_0}$ be the sequence of $\prodSpace{H_0}$-functions defined as the inverse Z-transform of
  \begin{align*}
    \widehat{\Theta}(z):&=\frac{\delta(z)}{k} \widehat{F}(z).
  \end{align*}
  Then the sequence $y^n$, $Y^n$ solves the following equations for all $n \in \N$:
\begin{subequations}
    \label{eq:time_stepping_difference}
  \begin{align}
   B(Y^n,V_h )&= \innerprod{\prodSpace{H_0}}{y^n \vecrhs}{V_h} + \innerprod{\prodSpace{H_0}}{\Theta^n}{V_h} \quad \quad \forall V_h \in H_h, \\
    y^{n+1}&=R(\infty) y^n + b^T A^{-1} Y^n.
  \end{align}
\end{subequations}
  This implies the following {\sl a priori} estimates:
  \begin{align}
    \label{eq:apriori_est_differentiated}
    \norm{y^n}_{H_0}&\leq C \sum_{j=0}^{n-1}{\norm{\Theta^j}_{\prodSpace{H_0}}}.
  \end{align}
  We write $[\partial_t^k x ]^n:=y^n$ (this notation can be justified by taking the Z-transform to establish the equivalence to the definition via operator calculus notation for $K(s)=s$).
\end{lemma}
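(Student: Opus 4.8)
The update equation $y^{n+1}=R(\infty)y^n + b^T A^{-1}Y^n$ is simply the definition of the sequence $y^n$, so the only substantive claim in \eqref{eq:time_stepping_difference} is the variational identity for $Y^n$, and the natural tool is the $Z$-transform, exactly as in Lemma~\ref{z_transform_lemma}. The plan is to transform all defining relations, verify that the transforms $\widehat{Y}$, $\widehat{y}$ satisfy the $Z$-transformed version of the target equation, and then invert using uniqueness of power-series coefficients. Throughout, existence of the transforms for small $\abs{z}$ is guaranteed by the a priori bounds already available: $\fdX{n}$, $\fdx{n}$ are controlled by Lemma~\ref{lemma_discrete_stability}, and $Y^n$, $y^n$ are built from them.

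First I would record the transforms of the two update equations. Since $x^0=y^0=0$, multiplying $x^{n+1}=R(\infty)x^n+b^T A^{-1}X^n$ by $z^n$ and summing gives $\widehat{x}=\frac{z}{1-zR(\infty)}\,b^T A^{-1}\widehat{X}$, and likewise $\widehat{y}=\frac{z}{1-zR(\infty)}\,b^T A^{-1}\widehat{Y}$ (cf.~\eqref{eq:_zy_in_terms_of_stages}). Next, from $Y^n=k^{-1}A^{-1}(X^n-x^n\ones)$ I obtain $\widehat{Y}=k^{-1}A^{-1}(\widehat{X}-\widehat{x}\,\ones)$, and substituting the expression for $\widehat{x}$ together with the Sherman--Morrison identity $\delta(z)=A^{-1}-\frac{z\,A^{-1}\ones\,b^T A^{-1}}{1-zR(\infty)}$ (established in the proof of Lemma~\ref{z_transform_lemma}) collapses this to the clean relation $\widehat{Y}=\tfrac1k\delta(z)\widehat{X}$.

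The heart of the argument is then to check that $\widehat{Y}$ solves $(-\ii A^{-1}+k\matH)\widehat{Y}=\widehat{y}\,\vecrhs+\widehat{\Theta}$. I would first transform the variational equation for $\fdX{n}$, which at the operator level reads $-\ii A^{-1}\widehat{X}+k\matH\widehat{X}=\widehat{x}\,\vecrhs+\widehat{F}$; using $\vecrhs=-\ii A^{-1}\ones$ and $A^{-1}(\widehat{X}-\widehat{x}\,\ones)=k\widehat{Y}$ this rearranges to $k\matH\widehat{X}=\ii k\widehat{Y}+\widehat{F}$. Because $\matH=\operatorname{diag}(\mathbf H,\dots,\mathbf H)$ commutes with the constant matrix $\delta(z)$ (the same commutation exploited in Lemma~\ref{lemma:3.5}), I can compute $k\matH\widehat{Y}=\matH(\delta(z)\widehat{X})=\delta(z)\matH\widehat{X}=\ii\delta(z)\widehat{Y}+\tfrac1k\delta(z)\widehat{F}$, and the last term is exactly $\widehat{\Theta}$. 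Hence $(-\ii A^{-1}+k\matH)\widehat{Y}=\ii(\delta(z)-A^{-1})\widehat{Y}+\widehat{\Theta}$; applying Sherman--Morrison once more and inserting the formula for $\widehat{y}$ reduces $\ii(\delta(z)-A^{-1})\widehat{Y}$ to $-\ii A^{-1}\ones\,\widehat{y}=\widehat{y}\,\vecrhs$. Matching power-series coefficients then yields the first equation of \eqref{eq:time_stepping_difference}.

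Finally, with \eqref{eq:time_stepping_difference} in hand, the pair $(y^n,Y^n)$ with source $\Theta^n$ and vanishing initial value $y^0=0$ fits the template \eqref{subeq:lemma:discrete_stability_problem} of Assumption~\ref{assumption:abstract_setting} verbatim, so the $H_0$-stability bound \eqref{eq:lemma_discrete_stability_l2} of Lemma~\ref{lemma_discrete_stability} applies directly and gives \eqref{eq:apriori_est_differentiated}, the initial term dropping out because $y^0=0$. I expect the only delicate points to be bookkeeping ones: correctly handling the rank-one correction in $\delta(z)$ in the two Sherman--Morrison reductions (those establishing $\widehat{Y}=\tfrac1k\delta(z)\widehat{X}$ and $\ii(\delta(z)-A^{-1})\widehat{Y}=\widehat{y}\,\vecrhs$), and justifying the commutation of $\delta(z)$ with $\matH$ as well as the termwise operator-level manipulations of the transformed series, all of which mirror the computations already carried out in the proof of Lemma~\ref{z_transform_lemma}.
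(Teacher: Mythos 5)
Your proposal is correct and follows essentially the same route as the paper's proof: Z-transform all relations, establish the key identity $\widehat{Y}=\tfrac{1}{k}\delta(z)\widehat{X}$ (the paper phrases this as $\widehat{x}\,\ones=\widehat{X}-A\delta(z)\widehat{X}$), move $\delta(z)$ onto the discrete test functions to invoke the transformed equation for $\widehat{X}$ and identify $\widehat{\Theta}=\tfrac{\delta(z)}{k}\widehat{F}$, then conclude by uniqueness of power-series coefficients and apply Lemma~\ref{lemma_discrete_stability} for the a priori bound. The only difference is organizational — you keep the $A^{-1}$-form of the transformed equation and apply Sherman--Morrison twice, while the paper works directly in the $\delta(z)$-form produced by Lemma~\ref{z_transform_lemma} — and the "commutation" of $\delta(z)$ with $\matH$ that you flag is handled in the paper by exactly the adjoint-on-test-function manipulation you describe (noting that $\prodSpace{H_h}$ is invariant under constant matrices).
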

\begin{proof}
  We show that the sequences $Z^n$, $z^n$, defined as the solutions to \eqref{eq:time_stepping_difference}, 
and the sequence of the functions $Y^n$, $y^n$, defined in the statement of the lemma, have the same Z-transforms. 
Proceeding as in the proof of Lemma~\ref{z_transform_lemma}, it is easy to see that
  $\widehat{Z}$ solves:
  \begin{align*}
    \innerprod{\prodSpace{H_0}}{\frac{-\ii \delta(z)}{k} \widehat{Z}}{V_h} + \matH\left(\widehat{Z},V_h\right) &= \innerprod{\prodSpace{H_0}}{\widehat{\Theta}}{V_h} \quad\quad \forall V_h \in \prodSpace{H_h}.
  \end{align*}
  
  Analogously, we get that the $Z$- transform of $\fdX{n}$ solves:
  \begin{align*}
    \innerprod{\prodSpace{H_0}}{\frac{-\ii \delta(z)}{k} \widehat{X}}{V_h} + \matH\left(\widehat{X},V_h\right) &= \innerprod{\prodSpace{H_0}}{\widehat{F}}{V_h} \quad\quad \forall V_h \in \prodSpace{H_h}. 
  \end{align*}
  By \eqref{eq:_zy_in_terms_of_stages}, we also have $\widehat{x}(z)= \left(z^{-1} - R(\infty)\right)^{-1} \vect{b}^T A^{-1} \widehat{X}(z)$. By the definition of $\delta(z)$, this can be rewritten as
  $\widehat{x}\,\ones = \widehat{X} - A \delta(z) \widehat{X}$.

  Inserting the definition of $\widehat{Y}$, this implies for $V_h \in \prodSpace{H_h}$:
  \begin{align*}
    \innerprod{\prodSpace{H_0}}{\frac{-\ii \delta(z)}{k} \widehat{Y}}{V_h} + \matH\left(\widehat{Y},V_h\right)
    &=\innerprod{\prodSpace{H_0}}{\frac{-\ii\delta(z)}{k} k^{-1}A^{-1} \left(\widehat{X} - \widehat{x}\,\ones\right)}{V_h} + \matH\left(k^{-1}A^{-1} \left(\widehat{X} - \widehat{x}\,\ones\right),V_h\right) \\
    &=\innerprod{\prodSpace{H_0}}{\frac{ -\ii \delta(z)}{k} k^{-1}A^{-1} \left( A\delta(z)\right)\widehat{X}}{V_h} + \matH\left(k^{-1}A^{-1} \left( A\delta(z)\right)\widehat{X},V_h\right) \\
    &=\innerprod{\prodSpace{H_0}}{\frac{-\ii\delta(z)}{k} \widehat{X}}{\frac{\delta(z)^T}{k}V_h} + \matH\left(\widehat{X},\frac{\delta(z)^T}{k}\,V_h\right) \\
    &=\innerprod{\prodSpace{H_0}}{\widehat{F}}{\frac{\delta(z)^T}{k} V_h} 
    =\innerprod{\prodSpace{H_0}}{\widehat{\Theta}}{ V_h}.
  \end{align*}
  The stability estimate~\eqref{eq:apriori_est_differentiated} is then a direct corollary of Lemma~\ref{lemma_discrete_stability}.
\end{proof}

We can now improve the results of Theorem~\ref{thm:full_error_est}, assuming some additional regularity of the initial condition, and  an additional stability condition for the method.
\begin{theorem}
\label{thm:full_error_est_refined}
  Let $\Gamma$ be piecewise smooth.
  Assume $\abs{R(\infty)}<1$ and denote by $q$ the order of the Runge-Kutta method used.
  Let $X_h$, $Y_h$ satisfy the approximation properties \eqref{eq:approx_spaces_assumption}.

  Let $\sdu{0} \in H^{\max(p_1+3,p_0+7/2)}(\R^d)$ and let $\mathcal{V}$ be sufficiently smooth (i.e., such that the 
semi-discrete sequences satisfy 
  $\sdU{n}, \matH{\sdU{n}},\matH^2 \sdU{n} \in  \prodSpace{H^{p_1+1}(\R^d)}$ and $\partial_n^- \sdU{n}, \partial_n^- \matH \sdU{n},\partial_n^- \matH^2 \sdU{n} \in \prodSpace{\hppwbdry{p_0+1}}$,
  see Corollary~\ref{sd_stages_apriori}). 

  Then, there exists a  constant $C>0$ depending on $\Omega$, the Runge-Kutta method (i.e., $A$ and $b$), $\mathcal{V}$,
  $p_0$, $p_1$ and $\sdu{0}$, but not on $k$, $n$, $h$ or $T$ such that:
  \begin{align*}
    \honenorm{\fdu{n}-u(n k)}&\leq C T \left(   h_{1}^{p_1} +   h_{0}^{p_0+3/2} +  k^q \right). \\
    \norm{\fdl{n}-\partial_n u(n k)}_{\hpbdry{-1/2}}&\leq C T \left(   h_{1}^{p_1} +   h_{0}^{p_0+3/2} +  k^q \right).
  \end{align*}
  
  Compared to Theorem~\ref{thm:full_error_est} this means we do not have any mesh size restriction and obtain
  an error estimate for $\lambda$.
\end{theorem}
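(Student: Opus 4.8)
The strategy is to avoid the $k^{-1/2}$ loss that forced the coupling $h \lesssim k^{1/2}$ in Theorem~\ref{thm:full_error_est} by never estimating the error directly in $H^1$ through the operator $S_T$ of Lemma~\ref{lemma:3.5}. Instead I first control the error and a discrete time derivative of the error in $L^2$---both estimates being free of any mesh restriction---and only afterwards trade this extra time regularity for the missing power of the $H^1$-norm via the elliptic stage equation. I keep working in the equivalent formulation of Lemma~\ref{equivalent_formulation_lemma}, i.e.\ in the abstract setting $H_0 = \pairltwo$, $H_1 = \pairhone$, where the stage error $E^n := \underline{\Pi_h}\sdU{n} - \fdU{n} \in \prodSpace{H_h}$ and the step error $e_n := \Pi_h \sdu{n} - \fdu{n}$ satisfy the perturbed Runge--Kutta recursion \eqref{eq:error_eqn_with_consistency_errors} with consistency data $\Xi^n = k\,(I - \underline{\Pi_h})(\matH\sdU{n} + \alpha \sdU{n})$ from \eqref{eq:def_consistency_errors}. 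Since $\sdu{n} - \fdu{n} = (I - \Pi_h)\sdu{n} + e_n$ and the projection error $(I-\Pi_h)\sdu{n}$ is controlled in $H^1$ without any restriction on $k$ by Lemma~\ref{lemma:quasi_optimality_of_ritz} together with Lemma~\ref{lemma:approximation_ritz_schroed}, the whole task reduces to bounding $e_n$ in $\pairhone$ and $E^n$ in $\prodSpace{\pairhone}$.

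As the first ingredient I would record two $L^2$-estimates, neither of which requires a relation between $h$ and $k$. The bound \eqref{eq:error_est_0} of Lemma~\ref{lemma:abstract_approx_with_projection}(i) controls $\norm{e_n}_{\pairltwo}$, and Lemma~\ref{lemma_solve_matrix_eqn} applied to the stage equation then controls $\norm{E^n}_{\prodSpace{\pairltwo}}$. For the time derivative I apply Lemma~\ref{lemma:time_stepping_difference} to the error system: the differentiated step error $y^n := [\partial_t^k e]^n$ and its stages $\mathcal{Y}^n := k^{-1}A^{-1}(E^n - e_n \ones)$ solve \eqref{eq:time_stepping_difference} with data $\Theta^n$ whose $Z$-transform is $\frac{\delta(z)}{k}\widehat{\Xi}$. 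Because $\frac{\delta(z)}{k}$ is precisely the symbol of $\partial_t^k$ and the semi-discrete stages obey $k^{-1}A^{-1}(\sdU{n} - \sdu{n}\ones) = -\ii\,\matH\sdU{n}$ by \eqref{rk_approx_whole_space_2}, the data collapse to $\Theta^n = -\ii k\,(I - \underline{\Pi_h})(\matH + \alpha)\matH\sdU{n}$, in which $\matH^2\sdU{n}$ now appears; this extra application of $\matH$ is exactly what raises the regularity demand on $\sdu{0}$ relative to Theorem~\ref{thm:full_error_est}. The $H_0$-stability \eqref{eq:apriori_est_differentiated} then bounds $\norm{y^n}_{\pairltwo}$, and Lemma~\ref{lemma_solve_matrix_eqn} bounds $\norm{\mathcal{Y}^n}_{\prodSpace{\pairltwo}}$, both in $L^2$ and with no mesh restriction.

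The upgrade to $H^1$ is algebraic. Using $\vecrhs = -\ii A^{-1}\ones$ in \eqref{eq:error_eqn_with_consistency_errors} and the definition of $B$, the stage equation rearranges, for all $V_h \in \prodSpace{H_h}$, into $\blfHStabP{E^n}{V_h} = \innerprod{\prodSpace{\pairltwo}}{\ii \mathcal{Y}^n + \alpha E^n}{V_h} + \innerprod{\prodSpace{\pairltwo}}{(I - \underline{\Pi_h})(\matH + \alpha)\sdU{n}}{V_h}$, where the explicit factor $k$ in $\Xi^n$ cancels the $k^{-1}$. Testing with $V_h = E^n$ and invoking the ellipticity of $\blfHStabP{\cdot}{\cdot}$ yields $\norm{E^n}_{\prodSpace{\pairhone}} \lesssim \norm{\mathcal{Y}^n}_{\prodSpace{\pairltwo}} + \norm{E^n}_{\prodSpace{\pairltwo}} + \norm{(I - \underline{\Pi_h})(\matH + \alpha)\sdU{n}}_{\prodSpace{\pairltwo}}$, a right-hand side already bounded in the previous step. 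It remains to pass from the stages $E^n$ to the step error $e_n$ through $e_{n+1} = R(\infty) e_n + \vect{b}^T A^{-1} E^n$: here the hypothesis $\abs{R(\infty)} < 1$ is indispensable, since it renders the geometric sum $\sum_{j} \abs{R(\infty)}^{\,n-1-j}$ uniformly bounded, so that $\norm{e_n}_{\pairhone} \lesssim \max_{j < n}\norm{E^j}_{\prodSpace{\pairhone}}$; with $\abs{R(\infty)} = 1$ (e.g.\ the Gauss methods) one would instead accumulate $\sum_j \norm{E^j}_{\prodSpace{\pairhone}}$ and pick up a spurious factor $n \sim T/k$. Inserting the approximation rates \eqref{eq:approx_spaces_assumption} for $\sdU{j}$, $\matH\sdU{j}$ and $\matH^2\sdU{j}$, and recalling that the $L^2$-quantities accumulate at most one factor $T$, gives the claimed $H^1$-bound, to which Theorem~\ref{sd_approx} adds the semi-discrete-to-exact contribution $C T k^q$.

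For the flux I would use that in the equivalent formulation $\fdl{n} = -\normalJump{\fdUstar{n}}$ and $\sdl{n} = -\normalJump{\sdUstar{n}}$ (Lemma~\ref{equivalent_formulation_lemma}), so that $\sdl{n} - \fdl{n} = -\normalJump{\sdUstar{n} - \fdUstar{n}}$ is the normal jump of the exterior part of the already-estimated $\pairhone$-error. I control it by the trace estimate $\norm{\normalJump{w}}_{\hpbdry{-1/2}} \lesssim \norm{w}_{\hpfull{1}} + \norm{\laplace w}_{\ltwogen{\R^d \setminus \Gamma}}$, in which the Laplacian is removed through the exterior Helmholtz equation \eqref{eq:extension_lemma:100} in favour of the difference-quotient quantities $\mathcal{Y}^n$ bounded above; Theorem~\ref{sd_approx} applied to $\partial_n^- \sdU{n}$ then supplies the $k^q$ term. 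I expect the main obstacle to be the bookkeeping of the second paragraph: confirming that the differentiated data genuinely reduce to $(I-\underline{\Pi_h})\matH^2\sdU{n}$ so that the elliptic step closes with exactly one extra power of $\matH$ (matching the stated regularity of $\sdu{0}$), and checking that every constant along the chain---most delicately in the damped passage from $E^n$ to $e_n$---stays independent of $k$, $n$, $h$ and $T$, the single factor $T$ entering only through the telescoping of the $L^2$-estimates.
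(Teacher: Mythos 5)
Your overall route coincides with the paper's: $L^2$ bounds for the stage error $E^n$ and for the discrete difference quotient $k^{-1}A^{-1}(E^n-e_n\ones)$ via Lemma~\ref{lemma:time_stepping_difference}, an algebraic upgrade to $H^1$ by testing the stage equation with $E^n$ and using ellipticity of $\blfHStabP{\cdot}{\cdot}$, and a trace/equation argument for $\lambda$. However, there is a genuine error in the key bookkeeping step --- exactly the one you yourself flagged as the main obstacle. The data do \emph{not} collapse to $\Theta^n = -\ii k\,(I-\underline{\Pi_h})(\matH+\alpha)\matH\sdU{n}$. The identity $\frac{\delta(z)}{k}\widehat{\Xi} = k^{-1}A^{-1}\bigl(\widehat{\Xi}-\widehat{\xi}\,\ones\bigr)$ that you implicitly use holds only when the scalar consistency sequence starts at zero, and here $\xi^0 = k(I-\Pi_h)\bigl(\mathbf{H}\sdu{0}+\alpha\sdu{0}\bigr) \neq 0$: it is the \emph{error} sequence $e_n$, not the consistency sequence $\xi^n$, that vanishes at $n=0$. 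Redoing the Z-transform computation with $\xi^0\neq 0$ produces, as in the paper,
\begin{align*}
  \Theta^j = -\ii k\, (I-\underline{\Pi_h})(\matH+\alpha)\matH\sdU{j}
  \;+\; R(\infty)^j\, A^{-1}\ones\, (I-\Pi_h)\bigl(\mathbf{H}\sdu{0}+\alpha \sdu{0}\bigr),
\end{align*}
and the second term carries \emph{no} factor of $k$; summing it naively over $j\le n$ would cost a factor $n\sim T/k$ and destroy the estimate. This is precisely where the paper invokes $\abs{R(\infty)}<1$: the sum $\sum_j \abs{R(\infty)}^j$ is a convergent geometric series, so this contribution is $\lesssim (1-\abs{R(\infty)})^{-1}\bigl(h_1^{p_1}+h_0^{p_0+3/2}\bigr)$. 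Your argument is therefore repairable within your own framework --- the missing term is controlled by the very hypothesis you already assume --- but as written the claimed formula for $\Theta^n$ is false, and the step where you bound $\sum_j\norm{\Theta^j}_{\prodSpace{H_0}}$ does not close.

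A secondary remark on where you locate the need for $\abs{R(\infty)}<1$: your passage from the stage errors $E^n$ (bounded in $\pairhone$) to the step errors via $e_{n+1}=R(\infty)e_n + \vect{b}^TA^{-1}E^n$ and a geometric sum is a legitimate point that the paper in fact glosses over --- its proof moves from $\norm{E^n}_{\prodSpace{\pairhone}}$ directly to the triangle inequality for $\fdu{n}-\Pi_h\sdu{n}$ without comment. So your proposal makes one step more explicit than the paper does, while missing the paper's actual use of the same hypothesis; a fully rigorous argument along this route needs $\abs{R(\infty)}<1$ in both places.
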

\begin{proof}
  We proceed analogously to the proof of Theorem~\ref{thm:full_error_est} and use the triangle inequality to estimate:
  \begin{align*}
    \honenormint{\fdu{n}-u(n k)} &\leq \honenormint{\fdu{n}-\sdu{n}} + \honenormint{\sdu{n}-u(n k)} .
  \end{align*}
  The second term can be estimated via the approximation property of the semi-discrete solution from Theorem~\ref{sd_approx}:
  \begin{align*}
    \honenormint{\sdu{n}-u(n k)}&\leq C T k^q.
  \end{align*}
    
  For the estimates of the first term, we go back to the proof of Lemma~\ref{lemma:abstract_approx_with_projection}, and again consider the difference
  $e^n:=\fdu{n} - \Pi_h \sdu{n}$, $E^n:= \fdU{n} - \Pi_h \sdU{n}$. Assume for the moment that $u^0_h=\Pi_h u^0$.
  From Lemma~\ref{lemma:time_stepping_difference} and the stability of solving \eqref{eq:time_stepping_difference} as shown in Lemma~\ref{lemma_solve_matrix_eqn}, we obtain 
  \begin{align*}
    \norm{k^{-1}A^{-1} \left(E^n - e^n\ones\right)}_{\mathcal{X}_0}
    &\leq C \sum_{j=0}^{n}{\norm{\Theta^j}_{\mathcal{X}_0}},
  \end{align*}
  where $\Theta^j$ are defined so that $\widehat{\Theta}=\frac{\delta(\cdot)}{k} \widehat{\Xi}$ and $\Xi^j:=k(I-\Pi_h)\left( \matH U^j + \alpha U^j\right)$
  are the consistency  errors from~\eqref{eq:def_consistency_errors} . We also write $\xi^j = k (I - \Pi_h) \left(\mathbf{H} u^j + \alpha u^j\right)$. 
  Since the sequence $U^j$ originates from a Runge-Kutta time stepping, it is easy to compute $\Theta^j$.
  We claim:
  \begin{align}
    \label{eq:def_theta_j}
    \Theta^j&=  k^{-1}A^{-1}(\Xi^j - \xi^j \ones) + R(\infty)^j  k^{-1}A^{-1} \xi^0 \ones.            
  \end{align}
  This can be seen  by taking the Z-transform of the right-hand side, analogously to the proof of Lemma~\ref{z_transform_lemma}, and noting that $u_0 \neq 0$ so that an additional term appears. 
  This means, writing $\mathcal{Z}$ for the $Z$-transform, 
  \begin{align*}
    \mathcal{Z}\left[k^{-1}A^{-1}(\Xi^j - \xi^j \ones) + R(\infty)^j k^{-1} A^{-1} \xi^0 \ones\right]
    &=k^{-1}A^{-1} \widehat{\Xi} - k^{-1}A^{-1} \widehat{\xi}\ones + \frac{1}{1-R(\infty)z} k^{-1}A^{-1} \xi^0 \ones \\
    &=\frac{\delta(z)}{k} \widehat{\Xi}, 
%
  \end{align*}
  where, in the last step we used the equality $\widehat{\xi}=\left( z^{-1} - R(\infty)\right)^{-1} b^T A^{-1} \widehat{\Xi} + \left(1- R(\infty)z\right)^{-1} \xi^0 $,
  which follows  
  analogously to \eqref{eq:_zy_in_terms_of_stages} ($\xi^j$ and $\Xi^j$ satisfy the same relation $\xi^{j+1}=R(\infty) \xi^j + b^T A^{-1} \Xi^j$ as the usual Runge-Kutta approximations
  due to the linearity of $\Pi_h$ and $\matH$).
  
  Inserting the definition of $\Xi^j$ in~\eqref{eq:def_theta_j} and then the equation for the semi-discretization for the difference $U^j - u^j\ones$ gives:  
  \begin{align*}
    \Theta^j&=A^{-1} (I-\Pi_h)\left( \left(\matH  + \alpha\right)\left( U^j - u^j\ones\right)\right)  
    + A^{-1} R(\infty)^j(I-\Pi_h)\left( \mathbf{H} u_0 +\alpha u_0\right)\ones  \\
    &= -\ii k  (I-\Pi_h)\left( \left(\matH  + \alpha\right) \matH U^j\right) + A^{-1}\ones (I-\Pi_h) R(\infty)^{j}\left( \mathbf{H} u_0 +\alpha u_0\right).
  \end{align*}

  The first term is of the right order already, as we can bound the sum with the factor of $k$. 
  We use the formula for the $n$-th term of the geometric series to to estimate
  \begin{align*}
    \sum_{j=0}^{n}{\norm{ A^{-1} \ones (I-\Pi_h) R(\infty)^{j}\left( \mathbf{H} u_0 +\alpha u_0\right)}_{\mathcal{X}_0}}
    &=\frac{1- \abs{R(\infty)}^{n}}{1-\abs{R(\infty)}} \norm{ A^{-1} \ones (I-\Pi_h) \left( \mathbf{H} u_0 +\alpha u_0\right)}_{\mathcal{X}_0}  \\
    &\lesssim  \norm{(I-\Pi_h) \mathbf{H} u_0}_{L^2(\Omega)} +\alpha\norm{ (I-\Pi_h)u_0}_{L^2(\Omega)}
  \end{align*}
  since we assumed $\abs{R(\infty)}<1$.
  
   Via the approximation properties of the spaces and the Ritz projector we arrive at:
  \begin{align}
    \label{eq:apriori_stages_differentiated}
    \norm{k^{-1}A^{-1} \left(E^n - e^n\ones\right)}_{\mathcal{X}_0}
    &\leq C k \sum_{j=0}^{n}{\left(h^{p_1} + h^{p_0+3/2}\right)}.
  \end{align}

  Analogously we can use \eqref{eq:error_eqn_with_consistency_errors} and the discrete stability of Lemma~\ref{lemma_discrete_stability} to 
    bound
    \begin{align}
      \label{eq:aprioir_stages_l2}
      \norm{E^n}_{\mathcal{X}_0}
      &\leq C \sum_{j=0}^{n}{ \norm{\Xi^j}_{\mathcal{X}_0}} \leq C  k \sum_{j=0}^{n}{\left(h^{p_1} + h^{p_0+3/2}\right)}.
    \end{align} 
  The weak form of the stage vector equation is:
  \begin{align}
    \label{eq:full_approximation:weak_form_stage_vector}
    \matH(E^n,V_h) &= k^{-1}\innerprod{\mathcal{X}_0}{-\ii A^{-1} \left(E^n - e^n\ones\right)}{V_h} + \innerprod{\mathcal{X}_0}{\Xi^j}{V_h}.
  \end{align}
  Using $V_h:=E^n$ as a test function  and applying the Cauchy-Schwarz inequality we get via~\eqref{eq:apriori_stages_differentiated} and \eqref{eq:aprioir_stages_l2}:
   $|\matH(E^n,E^n)| \leq C  \left[k\sum_{j=0}^{n}{\left(h^{p_1} + h^{p_0+3/2}\right)}\right]^2$.
  Adding another $L^2$ term to compensate for $\mathcal{V}(\cdot)$ gives:
  \begin{align*}
    \norm{E^n}_{\mathcal{X}^1} \leq Ck \sum_{j=0}^{n}{\left(h^{p_1} + h^{p_0+3/2}\right)}.
  \end{align*}

  The triangle inequality $\norm{ \fdu{n} - \sdu{n}}_{\mathcal{X}^1} \leq \norm{ \fdu{n} - \Pi_h \sdu{n}}_{\mathcal{X}^1} + \norm{ \Pi_h \sdu{n} - \sdu{n}}_{\mathcal{X}^1}$ and the approximation properties of $\Pi_h$ then give the stated result. 
  
  For the case of $u^0_h\neq \Pi_h u^0$, we just note that the discrete time-stepping is stable with regard to perturbations of the initial conditions via Lemma~\ref{lemma_discrete_stability},
  thus this only implies another error term of order $\norm{u^0 - u^0_h}_{\mathcal{X}_1}$.

  To get the $H^{-1/2}$ estimate, we use  $V_h = (0, V_*)$ with $V_* \in C_{0}^{\infty}(\R^d \setminus \Gamma)$ as a test function in~\eqref{eq:full_approximation:weak_form_stage_vector}, and get the pointwise equality:
  $\matH E^n_* = -\ii k^{-1} A^{-1} \left(E^n_* - e^n_*\ones\right) + \Xi^{n,*}$. (Here $E^n_*$ denotes the second component of the error $E^n=(E^n_h,E^h_*)$, and analogously for $e^n_*$ and $\Xi^n_*$.) 
  Using test functions in $C_0^{\infty}(\R^d\setminus \Gamma)$ in the definition of the Ritz projector \eqref{eq:ritz_projection_eq} gives
  $\widetilde{\matH} \left[\Pi_h \sdUstar{n}\right]=\widetilde{\matH} \sdUstar{n}$ pointwise in $\R^d \setminus \Gamma$. 
  Therefore we can write:
  \begin{align*}
    \matH \left(\sdUstar{n} - \fdUstar{n} \right) &=\widetilde{\matH}\left(\sdUstar{n} - \fdUstar{n} \right) - \alpha \left(\sdUstar{n} - \fdUstar{n} \right) \\
                                          &= \widetilde{\matH} (\Pi_h \sdUstar{n} - \fdUstar{n}) - \alpha \left(\sdUstar{n} - \fdUstar{n} \right),
  \end{align*}
  where $\fdUstar{n}$ denotes the second component of the fully discrete solution \eqref{equivalent_formulation_lemma_eqn}.
  This in turn implies the estimate
  \begin{align*}
    \norm{ \matH \left(\sdUstar{n} - \fdUstar{n} \right)}_{L^2(\R^d \setminus \Gamma)}
    &\lesssim \norm{ -\ii k^{-1} A^{-1} \left(E^n_* - e^n_*\ones\right) + \Xi^{n}_{*} - \alpha \left(\sdUstar{n} - \fdUstar{n} \right)}_{L^2(\R^d \setminus \Gamma)}.
  \end{align*}
  Together with estimate \eqref{eq:apriori_stages_differentiated} and the $H^1$-estimate for the error, this allows us to bound the normal trace.
\end{proof}

\begin{remark}
  The assumption $\abs{R(\infty)}\leq 1$ is satisfied by all $L$-stable methods, including the family of Radau-IIA methods, since they satisfy $R(\infty)=0$.  
\eremk
\end{remark}

\subsection{A refined $L^2$ estimate}
In Theorem~\ref{thm:full_error_est}, the convergence rate in space with respect to the $L^2$ norm is the same as the one for the $H^1$ norm. Under
some additional conditions on $\Omega$, this can be improved using the usual ``Aubin-Nitsche trick''.
\begin{lemma}
\label{lemma:ritz_projection_l2}
  Assume that $\Omega$ is convex or has a smooth boundary (so that a shift theorem holds for the homogeneous Dirichlet problem) and that
  $\mathcal{V}$ is sufficiently smooth.
  Let $\underline{u}=:(u,u^*) \in \pairhone$ with  $\norm{\laplace u^*}_{L^2(\R^d \setminus \Gamma)}<\infty$ and $\gamma^- u=-\traceJump{u^*}$,
  as well as $\gamma^- u^* = 0$.
  Then the following error estimate holds for the Ritz projector $\Pi_h$:
  \begin{align*}
    \norm{\underline{u}- \Pi_h \underline{u}}_{\pairltwo}
    &\leq C h \left(\norm{\underline{u}-\Pi_h \underline{u}}_{H^1} + \inf_{y_h \in Y_h} \norm{\normalJump{u^*} - y_h}_{H^{-1/2}(\Gamma)}\right)
  \end{align*}
\end{lemma}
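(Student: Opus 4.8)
The plan is to run an Aubin--Nitsche duality argument adapted to the non-conforming FEM--BEM coupling. Abbreviate the error by $\underline e := \underline u - \Pi_h \underline u = (e,e_*) \in \pairhone$; by construction it satisfies the \emph{relaxed} trace constraints $\traceJump{e_*} = -\gamma^- e$ and $\gamma^- e_* \in Y_h^\circ$ (but in general $\gamma^- e_* \neq 0$), so $\underline e$ lies in $\hat H(\hpint{1},Y_h)$ rather than in the continuous space $\hat H(\hpint{1},\hpbdry{-1/2})$. First I would introduce the adjoint problem: find $\underline\varphi = (\varphi,\varphi_*) \in \hat H(\hpint{1},\hpbdry{-1/2})$ (so that, in particular, $\gamma^- \varphi_* = 0$) solving $\blfHStab{\underline w}{\underline\varphi} = \innerprod{\pairltwo}{\underline w}{\underline e}$ for all admissible $\underline w$. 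Since $\alpha > 1+\norm{\mathcal{V}}_{L^\infty(\R^d)}$ makes $\blfHStab{\cdot}{\cdot}$ elliptic and Hermitian, this problem is well posed; the assumed shift theorem (convex or smooth $\Omega$) then yields $\norm{\underline\varphi}_{H^2} \lesssim \norm{\underline e}_{\pairltwo}$, where I would in particular exploit that the component $\varphi_*|_\Omega$ decouples into a homogeneous interior Dirichlet problem (test with $(0,w_*)$, $w_*\in C_0^\infty(\Omega)$).

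Next I would test the adjoint equation and integrate by parts separately over $\Omega$ and $\R^d\setminus\Gamma$. Because $\underline e$ is \emph{not} admissible as a test function (it only satisfies $\gamma^- e_* \in Y_h^\circ$, not $\gamma^- e_* = 0$), the choice $\underline w = \underline e$ leaves a boundary remainder; using the natural transmission condition $\partial_n^-\varphi = \partial_n^+\varphi_*$ satisfied by $\underline\varphi$, this produces an identity of the shape $\norm{\underline e}_{\pairltwo}^2 = \blfHStab{\underline e}{\underline\varphi} + \bdryprodp{\gamma^- e_*}{\normalJump{\varphi_*}}$ (up to sign and conjugation). I would then pick any $\underline\varphi_h \in H_h$ and split $\blfHStab{\underline e}{\underline\varphi} = \blfHStab{\underline e}{\underline\varphi - \underline\varphi_h} + \blfHStab{\underline e}{\underline\varphi_h}$. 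The first summand is controlled by continuity of $\blfHStab{\cdot}{\cdot}$ and the approximation estimate of Lemma~\ref{lemma:approximation_ritz_schroed} (which transfers the exterior best approximation to the interior via the extension operator), yielding $\lesssim \norm{\underline e}_{\pairhone}\,h\,\norm{\underline\varphi}_{H^2} \lesssim h\,\norm{\underline e}_{\pairhone}\norm{\underline e}_{\pairltwo}$.

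For the remaining Galerkin term I would invoke the quasi-orthogonality built into the Ritz projector: from the defining relation of $\Pi_h$ one has $\blfHStab{\underline e}{\underline\varphi_h} = -\,c(\underline u,\underline\varphi_h) = -\,\bdryprodp{\normalJump{u^*}}{\gamma^-(\varphi_h)_*}$. Since $\gamma^-(\varphi_h)_* \in Y_h^\circ$, I may subtract an arbitrary $y_h \in Y_h$ without changing the pairing and estimate it by $\inf_{y_h \in Y_h}\hpbdrynorm{-1/2}{\normalJump{u^*} - y_h}\cdot\norm{\gamma^-(\varphi_h)_*}_{\hpbdry{1/2}}$; the trace factor is $O(h)\,\norm{\underline e}_{\pairltwo}$ because $\gamma^-\varphi_* = 0$ and $\underline\varphi_h$ approximates $\underline\varphi$, needing only the interior $H^2$-regularity of $\varphi_*|_\Omega$. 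The boundary remainder $\bdryprodp{\gamma^- e_*}{\normalJump{\varphi_*}}$ is handled the same way, again using $\gamma^- e_* \in Y_h^\circ$. Collecting the three contributions, dividing by $\norm{\underline e}_{\pairltwo}$, and using the equivalence of $\blfHStab{\cdot}{\cdot}$ with the $\pairhone$-norm gives the claim.

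The main obstacle I anticipate is the regularity of the adjoint solution together with the bookkeeping of the non-conforming boundary terms. The dual problem is a transmission problem across $\Gamma$ on the unbounded set $\R^d\setminus\Gamma$, and the argument hinges on producing $H^2$-regularity (hence $H^{1/2}$ normal traces on $\Gamma$) for the relevant pieces while only the homogeneous Dirichlet shift theorem is available; showing that the decoupling of $\varphi_*|_\Omega$ and the transmission structure reduce matters to this interior shift theorem is the delicate point. Closely related is the need to route the non-conformity (the gap $Y_h^\circ \neq \{0\}$) and the consistency form $c$ so that one obtains precisely the factor $h\,\inf_{y_h}\hpbdrynorm{-1/2}{\normalJump{u^*} - y_h}$ with the gain of $h$, rather than a weaker term where the duality argument fails to supply the extra power of the mesh size.
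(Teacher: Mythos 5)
Your proposal is correct and takes essentially the same approach as the paper: an Aubin--Nitsche duality argument built on the identical dual transmission problem (with $\gamma^-\varphi_*=0$), the same cancellation of boundary terms via the natural condition $\partial_n^-\varphi=\partial_n^+\varphi_*$, the same use of the Ritz relation $\blfHStab{\underline{e}}{\underline{\varphi}_h}=-c(\underline{u},\underline{\varphi}_h)$ together with $\gamma^-(\cdot)\in Y_h^\circ$ to insert best approximations $y_h,\mu_h\in Y_h$, and the same final appeal to Lemma~\ref{lemma:approximation_ritz_schroed} and the approximation properties of $X_h$, $Y_h$. The one organizational difference concerns the point you flag as delicate: rather than posing the adjoint problem variationally and proving $H^2$-regularity afterwards, the paper constructs the dual solution explicitly by gluing a whole-space solution $\psi_1$ (whose regularity follows from full-space elliptic theory, with no shift theorem) to an interior homogeneous Dirichlet solution $\psi_2$ (where the convexity/smoothness shift theorem is used), which is precisely the decoupling you anticipate.
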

  \begin{proof}
    We write $\Pi_h \underline{u}=:(u_h,u_h^*)$ for the two components. Consider the solutions $\psi_1, \psi_2$ to the following two problems:
    \begin{align*}
      -\laplace \psi_1 + \left(V+ \alpha\right) \psi_1&= \begin{cases}
        u - u_h & \text{ in $\Omega$,} \\
        u^* - u_h^* & \text{ in $\Omega^+$,}
        \end{cases}\\
        \traceJump{\psi_1}&=\normalJump{\psi_1}=0, \\
        -\laplace \psi_2 + \left(V_0+ \alpha\right) \psi_2&= u^* - u_h^* \quad\quad \text { in $\Omega$,}  \\
 \gamma^- \psi_2&=0.
    \end{align*}
    Since $\psi_1$ is the solution to a full space elliptic problem, we can estimate $\norm{\psi_1}_{H^2(\R^d)}\leq C \norm{\underline{u} - \Pi_h \underline{u}}_{\pairltwo}$.
    The same estimate holds for $\psi_2$, as we assumed that a shift theorem holds for  $\Omega$,
    i.e, $\norm{\psi_2}_{H^2(\Omega)}\leq C \norm{\underline{u} - \Pi_h \underline{u}}_{\pairltwo}$.
    We rearrange the terms into
    \begin{align*}
      \psi&:=\psi_1|_{\Omega}, \\
      \psi^*&:=\begin{cases}
        \psi_2 & \text{ in } \Omega, \\
        \psi_1 & \text{ in } \Omega^+ 
      \end{cases}
    \end{align*}
    and write $\underline{\psi}:=(\psi,\psi^*)$.
    Integration by parts then gives:
    \begin{align*}
      \norm{u-u_h}_{L^2(\Omega)}^2 + \norm{u^* - u_h^*}_{L^2(\R^d)}^2
      &= \ltwoprodint{-\laplace \psi + \left(V(x)+\alpha \right) \psi}{u-u_h} + \ltwoprodfull{-\laplace \psi^* + \left(V_0 + \alpha \right) \psi^*}{u^* -u_h^*} \\
      &= \blfHStab{\underline{\psi}}{\underline{u}-\Pi_h \underline{u}} - \bdryprod{ \partial_n^- \psi }{\gamma^- (u-u_h)}  \\
      & - \bdryprod{\partial_n^- \psi^*}{\gamma^- (u^*-u_h^*)} + \bdryprod{\partial_n^+ \psi^*}{\gamma^+ (u^* -u_h^*)} \\
      &= \blfHStab{\underline{\psi}}{\underline{u}- \Pi_h \underline{u}}
      - \bdryprod{ \partial_n^- \psi }{\gamma^- (u-u_h)} - \bdryprod{\partial_n^+ \psi^*}{\traceJump{(u^* -u_h^*)}} \\
      &-\bdryprod{\normalJump{\psi^*}}{\gamma^-(u^* -u_h^*)}.
    \end{align*}
    Since $\partial^-_n \psi=\partial^-_n \psi_1=\partial^+_n \psi_1=\partial^+_n \psi^*$ and $\gamma^-\left(u-u_h\right)=-\traceJump{(u^*-u_h^*)}$, this becomes:
    \begin{align*}
      \norm{u-u_h}_{L^2(\Omega)}^2 + \norm{u^* - u_h^*}_{L^2(\R^d)}^2
      &= \blfHStab{\underline{\psi}}{\underline{u}-\Pi_h \underline{u}}  - \bdryprod{\normalJump{\psi^*}}{\gamma^-(u^* -u_h^*)}.
    \end{align*}
    For $\underline{\psi_h}:=(\psi_h,\psi^*_h) \in H_h$ and $\lambda_h$, $\mu_h \in Y_h$ we can use the definition of the Ritz projection $\Pi_h \underline{u}$,
    the fact that $\gamma^- \psi_h$ and $\gamma^- (u-u_h) \in Y_h^{\circ}$ and $\gamma^- \psi^*=0$, to get:
    \begin{align}
      \norm{u-u_h}_{L^2(\Omega)}^2 &+ \norm{u^* - u_h^*}_{L^2(\R^d)}^2 \\
      &=\blfHStab{\underline{\psi}-\underline{\psi_h}}{\underline{u}-\Pi_h \underline{u}} \nonumber 
      + \bdryprod{\normalJump{u} - \lambda_h}{\gamma^- (\psi^*- \psi_h^*)}  -\bdryprod{\normalJump{\psi^*} - \mu_h}{\gamma^-(u^* -u_h^*)} \nonumber \\
      &\lesssim \left( \norm{\underline{\psi} - \underline{\psi_h}}_{\pairhone} + \norm{\normalJump{\psi^*} - \mu_h}_{H^{-1/2}(\Gamma)} \right)
      \left(\norm{\normalJump{u} - \lambda_h}_{H^{-1/2}(\Gamma)} + \norm{ \underline{u} - \Pi_h \underline{u}}_{\pairhone} \right).
      \label{eq:ltwo_estimate_final_proof}
    \end{align}
    The best approximation property of $H_h$, given in Lemma~\ref{lemma:approximation_ritz_schroed}, together with the approximation property of $X_h$ and $Y_h$ from
    \eqref{eq:approx_spaces_assumption} then give:
    \begin{align*}
      \inf_{\underline{\psi_h} \in H_h}\norm{\underline{\psi} - \underline{\psi_h}}_{\pairhone} + \inf_{\mu_h \in Y_h} \norm{\normalJump{\psi^*} - \mu_h}_{H^{-1/2}(\Gamma)}
      &\lesssim h \left(\norm{\psi}_{H^2(\Omega)} + \norm{\normalJump{\psi^*}}_{H^{1/2}(\Gamma)}\right)
        \lesssim h \norm{\underline{u} - \Pi_h \underline{u}}_{\pairltwo},
    \end{align*}
    where in the last step we used the regularity of $(\psi,\psi^*)$. Combining this estimate with \eqref{eq:ltwo_estimate_final_proof} then completes the proof.
  \end{proof}  

\begin{remark}
  It can be shown that the Ritz projector is equivalent to the Galerkin projection for the symmetric coupling of the
  problem $-\laplace u + (V+\alpha)u = f $, where $u^*$ is computed via the representation formula.
  Thus Lemma~\ref{lemma:ritz_projection_l2} also gives a result about the $L^2$ convergence of such a post-processing step for
  the FEM-BEM coupling of stationary elliptic problems. \eremk
\end{remark}

Analogous to Theorem~\ref{thm:full_error_est}, we get the following stronger convergence result in the $L^2$ norm:
\begin{theorem}
\label{thm:full_error_est_l2}
  Assume that the assumptions of Theorem~\ref{thm:full_error_est} are satisfied.
  Additionally, assume that $\Omega$ is convex or has a smooth boundary.
  Then there exists a constant $C>0$, depending on $\Omega$, the Runge-Kutta method (i.e., $A$ and $b$), $\mathcal{V}$,
  $p_0$, $p_1$ and $\sdu{0}$, but not on $k$, $n$, $h$ or $T$ such that:
  \begin{align*}
    \ltwonormint{\fdu{n}-u(n k)}\leq C T \left( h_{1}^{p_1+1} +   h_{0}^{p_0+5/2} +  k^q \right)
  \end{align*}
\end{theorem}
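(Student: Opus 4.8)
The plan is to mirror the proof of Theorem~\ref{thm:full_error_est} and split the error through the triangle inequality,
\[
  \ltwonormint{\fdu{n}-u(nk)} \le \ltwonormint{\fdu{n}-\sdu{n}} + \ltwonormint{\sdu{n}-u(nk)}.
\]
The temporal contribution $\ltwonormint{\sdu{n}-u(nk)}$ is left untouched: it is bounded by $CTk^q$ via Theorem~\ref{sd_approx} together with the {\sl a priori} regularity of the semi-discrete solution. All the new work is devoted to gaining one extra power of $h$ in the spatial part $\ltwonormint{\fdu{n}-\sdu{n}}$.

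For the spatial error I would invoke the $H_0$-estimate \eqref{eq:error_est_0} of Lemma~\ref{lemma:abstract_approx_with_projection}\,(i) with $H_0=\pairltwo$, which reduces it (up to the harmless initial-data terms) to
\[
  C_1\,k \sum_{j=0}^{n-1}\left( \norm{(I-\underline{\Pi_h})\,\matH \sdX{j}}_{\pairltwo} + \norm{(I-\underline{\Pi_h})\,\sdX{j}}_{\pairltwo}\right).
\]
The decisive new ingredient is that, instead of the mere $H^1$-quasi-optimality of $\Pi_h$ used in Theorem~\ref{thm:full_error_est}, each $\pairltwo$-term is now estimated by the Aubin--Nitsche bound of Lemma~\ref{lemma:ritz_projection_l2},
\[
  \norm{(I-\Pi_h)\underline{u}}_{\pairltwo} \le C h\left( \norm{(I-\Pi_h)\underline{u}}_{\pairhone} + \inf_{y_h\in Y_h}\hpbdrynorm{-1/2}{\normalJump{u^*}-y_h}\right),
\]
applied to the pairs $\underline{u}=\sdX{j}=(\sdU{j}|_{\Omega},\sdU{j}|_{\Omega^+})$ and $\underline{u}=\matH\sdX{j}$. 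This is where the additional factor $h$ enters.

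It then remains to bound the two quantities on the right. For the $\pairhone$-term I would reuse exactly the chain from Theorem~\ref{thm:full_error_est}, namely Lemma~\ref{lemma:quasi_optimality_of_ritz} and Lemma~\ref{lemma:approximation_ritz_schroed} followed by the FEM/BEM rates \eqref{eq:approx_spaces_assumption}, which produces $h_1^{p_1}+h_0^{p_0+3/2}$; the extra $h$ upgrades these to $h_1^{p_1+1}+h_0^{p_0+5/2}$. For the boundary term I would identify, as in Lemma~\ref{equivalent_formulation_lemma}, the relevant normal jump with the normal trace ($\normalJump{\sdUstar{j}}=-\partial_n^-\sdU{j}$, and correspondingly for $\matH\sdU{j}$), and then apply the second estimate in \eqref{eq:approx_spaces_assumption} to again obtain $h_0^{p_0+5/2}$. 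The $\pairhone$- and $\hppwbdry{p_0+1}$-norms of $\sdU{j}$ and $\matH\sdU{j}$ are controlled uniformly in $j$ (in terms of $\sdu{0}$) by Corollary~\ref{sd_stages_apriori} and elliptic regularity at the boundary. Finally, since $n\le T/k$, the prefactor $k$ turns the sum into the factor $T$, giving the claimed bound.

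The step I expect to require the most care is verifying that the semi-discrete pairs $\sdX{j}$ and their images $\matH\sdX{j}$ genuinely meet the hypotheses of Lemma~\ref{lemma:ritz_projection_l2}: the trace constraints $\gamma^- u = -\traceJump{u^*}$ and $\gamma^- u^*=0$, and the regularity $\norm{\laplace u^*}_{L^2(\R^d\setminus\Gamma)}<\infty$. The constraints follow from the structure established in Lemma~\ref{equivalent_formulation_lemma} (with $X_h=\hpint{1}$, $Y_h=\hpbdry{-1/2}$), while the regularity is guaranteed by the smoothness of $\sdu{0}$ assumed in Theorem~\ref{thm:full_error_est} via Corollary~\ref{sd_stages_apriori} — in particular enough smoothness that $\matH\sdU{j}$ still possesses an $L^2$ Laplacian. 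Keeping track of which mesh parameter ($h_0$ or $h_1$) supplies the Aubin--Nitsche factor on each component is the remaining piece of bookkeeping.
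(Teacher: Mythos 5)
Your proposal is correct and follows exactly the route the paper takes: its proof of Theorem~\ref{thm:full_error_est_l2} is precisely ``repeat the proof of Theorem~\ref{thm:full_error_est}, but replace the $H^1$-quasi-optimality of $\Pi_h$ by the Aubin--Nitsche estimate of Lemma~\ref{lemma:ritz_projection_l2},'' which is what you carry out, including the $L^2$-stability reduction via \eqref{eq:error_est_0}, the identification $\normalJump{\sdUstar{j}}=-\partial_n^-\sdU{j}$, and the verification of the trace and regularity hypotheses of Lemma~\ref{lemma:ritz_projection_l2} through Lemma~\ref{equivalent_formulation_lemma} and Corollary~\ref{sd_stages_apriori}. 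Your filled-in details (which the paper leaves implicit) are sound, so no gap remains.
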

\begin{proof}
  The proof follows along the same lines as the one for Theorem~\ref{thm:full_error_est}, but using the stronger approximation result for $\Pi_h$ 
  given by Lemma~\ref{lemma:ritz_projection_l2}.
\end{proof}

\section{Numerical results}
\label{sec:numerics}
\subsection{Implementation}
We implemented the fully discrete scheme described in this paper, using the software package NGSolve (see \cite{ngsolve}) for the finite element discretization
and Bem++ (see \cite{bempp_preprint}) for the boundary integral operators. To compute the convolution quadrature contributions, we used the FFT-based method introduced
by Banjai in \cite{banjai_cq_algs}, which  avoids the explicit computation of the convolution weights, as defined by \eqref{eq:def_cq_op_calc_weights},
and instead is based on approximating them via numerical quadrature.

Let $\partial B_\lambda(0)$ denote the circle of radius $\lambda>0$ centered at $0$. 
By the Cauchy integral formula we can write for the different operators
\begin{align}
  \label{psi_cauchy_int_disc}
  A^n&:= \frac{1}{2\pi \ii} \int_{\partial B_{\lambda}(0)}{A(z) \; z^{-n-1} \;dz} , 
\end{align}
where $A$ may stand for $V$, $K$, $K^T$ or $W$.

In order to get an approximation that can actually be computed, we discretize the integrals above via a $Q$-point trapezoidal rule:
\begin{align}
  \label{trapezoidal_rule_gen}
  A^n \approx \widetilde{A}^n:=\frac{\lambda^{-n}}{Q+1} \sum_{l=0}^{Q}{A\left(\lambda \zeta_{Q+1}^{-l}\right) \zeta_{Q+1}^{ln}}, 
\end{align}
where $\zeta_{Q+1}:=e^{ \frac{2\pi \ii}{Q+1}}$. In the theory about convolution quadrature, it is well-known 
that choosing $\lambda \approx \operatorname{eps}^{\frac{1}{2 (Q+1)}}$, where $\operatorname{eps}$ denotes machine precision,
leads to good approximation results (this was already suggested in \cite{lubich_cq_and_op_calc2}).
In \cite[Remark 5.11]{banjai_sauter_rapid_solution_wave_eqn} it was observed that, when considering an additional perturbation of the operators $A\left(\lambda \zeta_{Q+1}^{-l}\right)$,
for example due to $\mathcal{H}$-matrix approximation, it is recommended to choose $\lambda \sim k^{\frac{3}{Q+1}}$.
In our experiments, we therefore used $\lambda:=\max\left(\operatorname{eps}^{\frac{1}{2 (Q+1)}},k^{\frac{3}{Q+1}}\right)$.
Our analysis did not account for quadrature errors, but we observed that choosing $Q \geq n$ gives good results.
In order to evaluate the matrix functions $V(B(z))$ etc. we diagonalize the matrix $\delta(z)$ instead of
computing the contour integral in Definition~\ref{def:riesz_dunford}. This is justified for
Radau IIA methods of 2 stages in \cite[Proposition 3.4]{banjai_cq_algs}, and we did not observe any problems for any of the other methods tested.
If we write $M$ and $S$ for the mass and stiffness matrix of the finite element approximation, the appearing block systems have the structure
\begin{align*}
  \begin{pmatrix}
    -\ii A^{-1} M + k S+ k W(0) & k (1/2 - K^T(0)) \\
    -1/2 +  K(0) & V(0)
  \end{pmatrix}
\end{align*}
and were solved the linear systems using a preconditioned GMRES method. The preconditioner used has diagonal block structure, i.e.,
\begin{align*}
  P^{-1}:=\begin{pmatrix}
    P_{FEM}^{-1} & 0 \\
    0 & P_{BEM}^{-1}
  \end{pmatrix},
\end{align*}
where the preconditioner $P_{BEM}$ makes use of the fact that $V(0)$ is already assembled in diagonalized form by using an $\mathcal{H}$-matrix LU-factorization
for each operator $V(\lambda_j)$, where the $\lambda_j$ are the eigenvalues of $B(0)$. The FEM preconditioner is again block-diagonal itself and defined as
\begin{align*}
  P_{FEM}^{-1}:=\begin{pmatrix}
    P_{MG}^{-1}(A_{11}) & 0 & \dots & 0 \\
    0 & P_{MG}^{-1}(A_{22}) & 0      & \vdots \\
    \vdots & 0& \ddots  & 0\\
    0 & 0& &P_{MG}^{-1}(A_{mm})
  \end{pmatrix},
\end{align*}
where $P_{MG}(\lambda)$ is a standard multigrid preconditioner, based on a block-Jacobi smoother
as is already implemented in NGSolve,
for the FEM-matrix $-\ii \lambda M + k\,S$. We selected this preconditioning strategy because it is 
easily implemented using the preconditioners already available in NGSolve and Bem++. While we do not have 
any theoretical analysis of the preconditioning strategy, it appears to work well for our model problem, 
taking for example only 56 steps to reduce the residual by a factor $10^{-11}$, 
in the case of a 2 stage Radau IIA method and degree $(3,2)$ FEM-BEM spaces, where the FEM space consisted of $912,673$ degrees of freedom.

\subsection{Gaussian beams and the free Schrödinger equation}
In this section we look at numerical results for the free Schrödinger equation, $\mathcal{V}=0$ in $3D$.
That is, we consider the model problem:
\begin{align}
\label{eq:schroedinger_model_problem}
  \begin{cases}
    \ii u_t(x,t)=-\laplace u, & x \in \R^3, \\
    u(x,0)=\sdu{0}. &
  \end{cases}
\end{align}

Given a point $x_c \in \R^3$ and a wave vector $p_0 \in \R^3$, we consider the Gaussian beam
\begin{align*}
  \sdu{0}(x):=\sqrt[4]{\frac{2}{\pi}} e^{-\abs{x-x_c}^2 + \ii p_0 \cdot (x-x_c)}.
\end{align*}
For this initial condition, the exact solution is given by
\begin{align*}
  u_{ex}(x,t)=\sqrt[4]{\frac{2}{\pi}} \sqrt{\frac{\ii}{-4 t + \ii}} \exp\left(\frac{-\ii \abs{x-x_c}^2 - p_0 \cdot (x-x_c) + \abs{p_0}^2t}{-4t + \ii}\right).
\end{align*}

\begin{figure}[h!]
  \includegraphics[width=8cm]{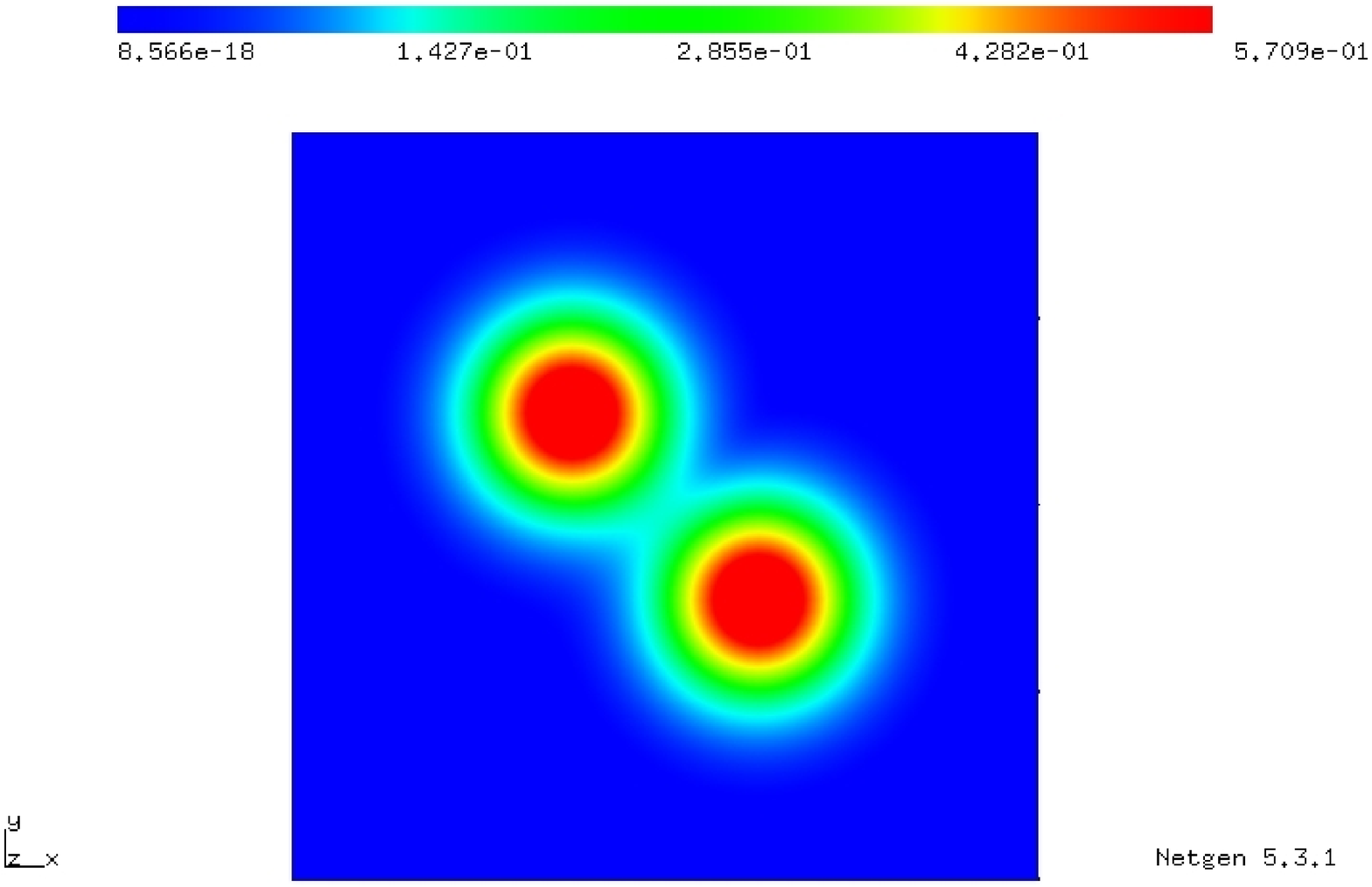}
  \includegraphics[width=8cm]{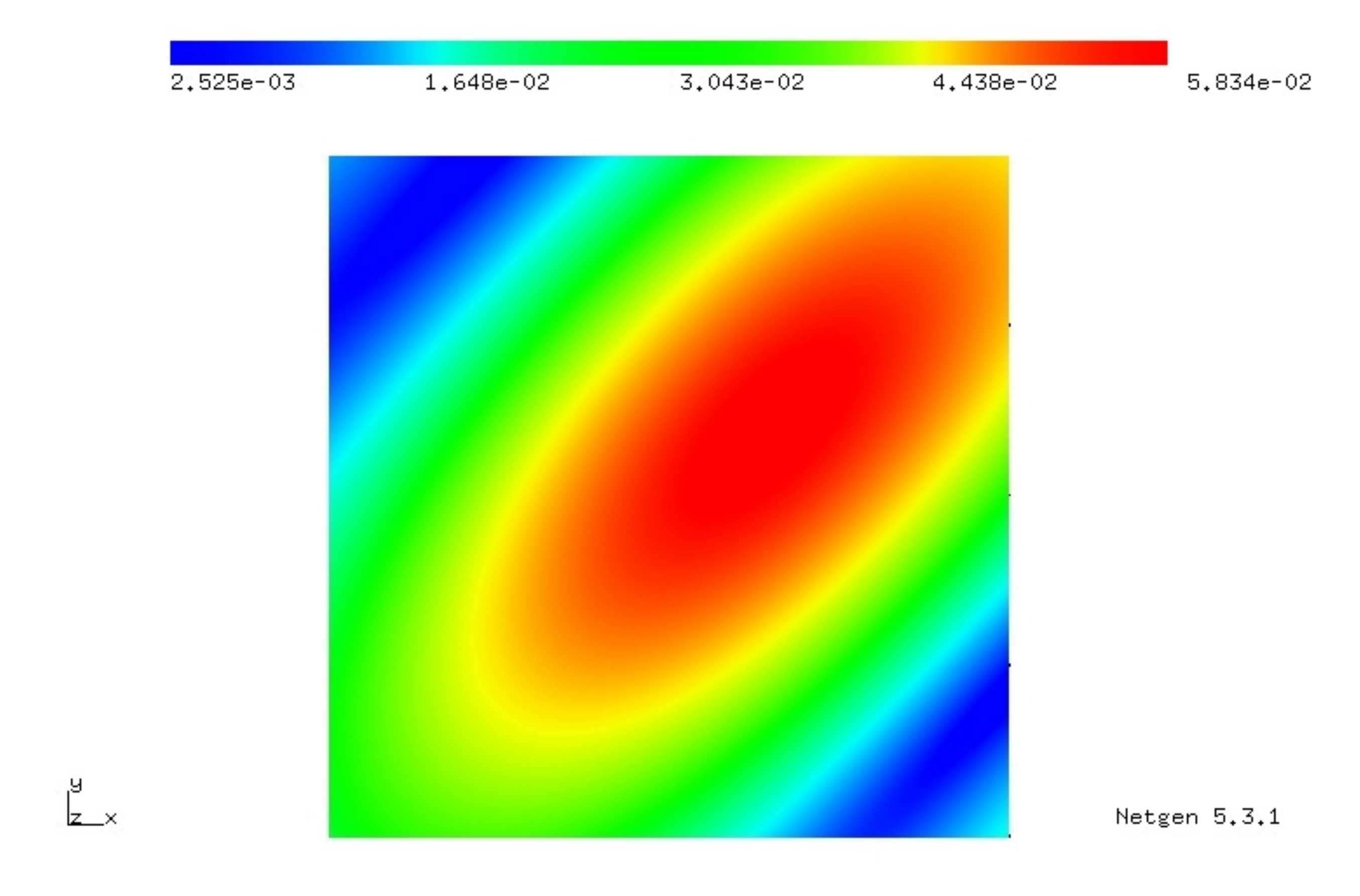}
\caption{Modulus of  exact solution of \eqref{eq:schroedinger_model_problem} at $t=0$(left) and $t=2$ (right) for $z=0$}
\label{fig:exact_solution}
\end{figure}

As a computational domain, we chose a cube with side length $8$ centered at the origin.
For our numerical experiments, we chose a combination of two Gaussian beams as initial condition, $u^0_1$ and $u^0_2$.
$u^0_1$ is centered at $(-1,1,0)$ and has a wave number $(1,0,0)$. This makes the exact solution a Gaussian wave packet, traveling out of the domain $\Omega$.
We center $u^0_2$ at $(1,-1,0)$ with wave number $(0,0,0)$, which means that we will mostly see a dispersive effect. This second term was added, to better distinguish
between convergence and artificial damping introduced by the method. This choice of initial condition does not satisfy the condition 
  $\operatorname{supp}{\sdu{0}}\subseteq \Omega$,  but due to the fast decay rate, the error due to truncating outside of $\Omega$ becomes negligible.
Figure~\ref{fig:exact_solution} shows the exact solution for $t=0$ and  $t=2$.

\begin{example}
  In this example, we look at the convergence rates for the one stage Gauss method and the 2 and 3 stage Radau IIA methods.
  We  chose the mesh  and time step size to be proportional, i.e., $k \sim h$ by performing a uniform refinement of the mesh, every time we halved the time step size.
  In light of Theorem~\ref{thm:full_error_est}, we
  expect convergence of order $2$, $3$, and $5$ respectively, as long as we couple with Finite Elements of the same order, and boundary elements of order $p_0=p-1$.
  We compare the maximum of the $L^2$ and $H^1$ error, taken between $t=0$ and $t=2$  in the FEM term,
    i.e., $\max_{n=0,\dots, N}{\norm{\fdu{n}-u(t_n)}_{L^2(\Omega)}}$ and  $\max_{n=0,\dots, N}{\norm{\fdu{n}-u(t_n)}_{H^1(\Omega)}}$.
  In order better to compare the two methods, we plot $m n$ in the $x$-axis, where $m$ is the number of stages. This reflects the fact that for the higher order
  method, we need to assemble $m$-times the number of boundary operators.
  We see that the 1 stage Gauss and the 2 stage Radau IIA  methods converge with the predicted full rates of 2 and 3 respectively. For the higher order
  Radau method, we do not see the predicted rate, most likely due to a preasymptotic behavior, but comparing the number of operators 
  to the achieved accuracy, we see that the higher order methods prove more efficient.
  \begin{figure}
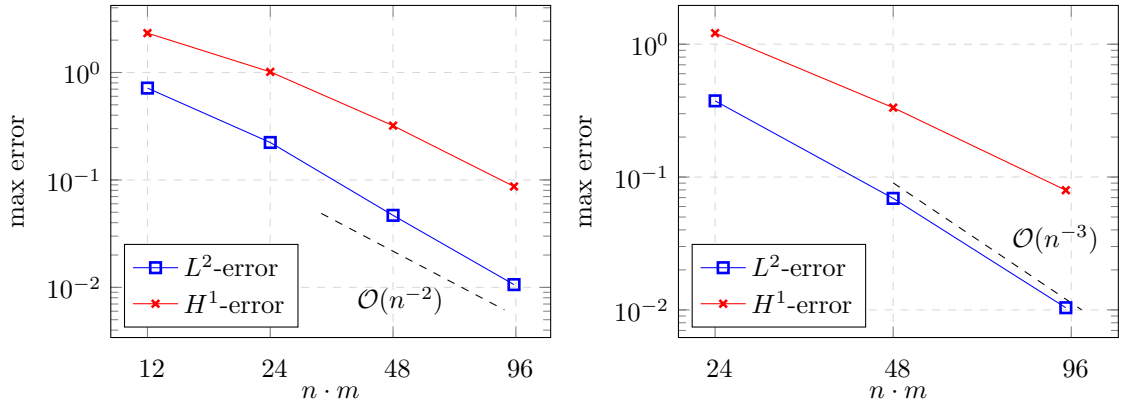
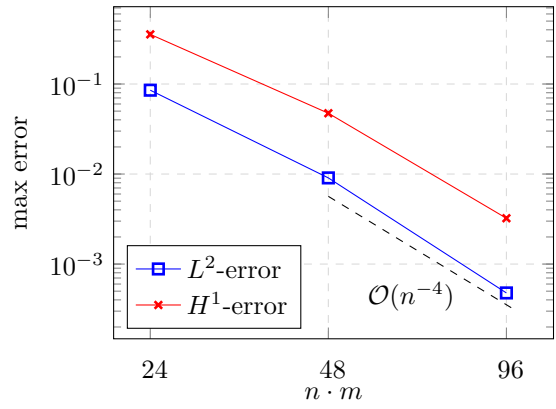

    \centering
      \begin{subfigure}[b]{0.45\textwidth}
        \includeTikzOrEPS{conv_gauss1}
        \caption{1 stage Gauss method  (order 2)}
      \end{subfigure}
      \begin{subfigure}[b]{0.45\textwidth}
        \includeTikzOrEPS{conv_radau}
        \caption{2 stage Radau IIA method (order 3)} 
      \end{subfigure}
      \begin{subfigure}[b]{0.45\textwidth}
        \vspace{5mm}
        \includeTikzOrEPS{conv_radau3}
        \caption{3 stage Radau method (order 5)}
      \end{subfigure}
    \caption{Comparison of a 1 stage Gauss method and 2 and 3 stage Radau IIA methods}
    \label{fig:conv_gauss_radau}
  \end{figure}
\eremk
\end{example}

\clearpage
\newpage
\appendix{\bf Boundary element methods for vector valued problems}
\label{appendix_bem_systems}

In this section we generalize some well-known results about boundary element methods for the Helmholtz equation to the
case of vector valued problems, where the ``wave number'' is replaced by a matrix. We start by recalling the
scalar case with the following proposition:
\begin{proposition}[Representation formula]
   Let  $u \in H^1( \R^d \setminus \Gamma)$ with $(\Delta - s^2)u \in L^2(\R^d \setminus \Gamma)$. 
  Then for $\Re(s)>0$ we can write $u$ as:
  \begin{align}
\label{eq:general-representation-formula}
    u=-N(s)\left((\laplace - s^2 ) u \right) + S(s) \normalJump{u} - D(s) \traceJump{u} \quad \quad 
\mbox{ on $\R^d \setminus \Gamma$}.
  \end{align}
  For solutions to the Helmholtz equation, i.e., $\left(\laplace - s^2 \right) u =0$  this becomes
  \begin{align*}
    u= S(s) \normalJump{u} - D(s) \traceJump{u} \quad \quad \mbox{ on $\R^d \setminus \Gamma$}.
  \end{align*}
\end{proposition}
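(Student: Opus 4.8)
The plan is to avoid evaluating the singular boundary integrals directly and instead identify the right-hand side with $u$ by a uniqueness argument. Denote by
\[
  w := -N(s)\big((\laplace - s^2)u\big) + S(s)\normalJump{u} - D(s)\traceJump{u}
\]
the function on $\R^d \setminus \Gamma$ given by the claimed formula, where $\gamma^\pm u \in H^{1/2}(\Gamma)$ by the trace theorem and the conormal derivatives $\partial_n^\pm u \in H^{-1/2}(\Gamma)$ are understood in the weak sense furnished by Green's formula (which is meaningful precisely because $(\laplace - s^2)u \in L^2$), so that $\traceJump{u} \in H^{1/2}(\Gamma)$ and $\normalJump{u} \in H^{-1/2}(\Gamma)$. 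The first step is to check that $w$ solves the correct equation: since $\Phi(x,\cdot;s)$ is the fundamental solution of $\laplace - s^2$, the Newton potential satisfies $(\laplace - s^2)\big(-N(s)f\big) = f$, while $S(s)\mu$ and $D(s)\phi$ solve the homogeneous equation in $\R^d \setminus \Gamma$. Hence $(\laplace - s^2)w = (\laplace - s^2)u$ in $\R^d \setminus \Gamma$.

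Next I would compute the jumps of $w$ across $\Gamma$ from the relations \eqref{bem_operator_properties}. Since $N(s)f \in H^2_{\mathrm{loc}}$ it has continuous Cauchy data and contributes no jump. The layer potential jump relations read $\traceJump{S(s)\mu} = 0$, $\normalJump{S(s)\mu} = \mu$, $\traceJump{D(s)\phi} = -\phi$ and $\normalJump{D(s)\phi} = 0$; inserting $\mu = \normalJump{u}$ and $\phi = \traceJump{u}$ yields $\traceJump{w} = \traceJump{u}$ and $\normalJump{w} = \normalJump{u}$. Consequently $v := u - w \in H^1(\R^d \setminus \Gamma)$ has vanishing jumps $\traceJump{v} = \normalJump{v} = 0$, so in fact $v \in H^1(\R^d)$, and it solves $(\laplace - s^2)v = 0$ on all of $\R^d$.

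To finish I would invoke uniqueness for the whole-space problem. Because $\Re(s) > 0$ the kernel $\Phi$ decays exponentially, so $v \in H^1(\R^d)$ and testing the equation with $\overline{v}$ over $\R^d$ produces no boundary contribution at infinity, giving $\int_{\R^d} |\nabla v|^2 + s^2 \int_{\R^d} |v|^2 = 0$. Taking the imaginary part shows $\Im(s^2)\int_{\R^d}|v|^2 = 0$, and as $\Im(s^2) = 2\Re(s)\Im(s)$ this already forces $v = 0$ whenever $\Im(s) \neq 0$; in the remaining case $s > 0$ is real and the real part $\int|\nabla v|^2 + s^2\int|v|^2 = 0$ forces $v = 0$ as well. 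Thus $u = w$, which is the stated representation, and the Helmholtz corollary follows immediately on setting $(\laplace - s^2)u = 0$, which annihilates the Newton term.

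The main obstacle is the low regularity: the jump relations, the mapping properties used above, and the identity $(\laplace - s^2)(-N(s)f) = f$ are most transparently justified first for smooth, rapidly decaying $u$, where the pointwise Green's second identity with a small ball excised around the evaluation point applies verbatim. I would then extend to general $u$ with $u \in H^1(\R^d \setminus \Gamma)$ and $(\laplace - s^2)u \in L^2$ by density, relying on the continuity of $N(s): L^2(\R^d\setminus\Gamma) \to H^2_{\mathrm{loc}}$, $S(s): H^{-1/2}(\Gamma) \to H^1_{\mathrm{loc}}$, and $D(s): H^{1/2}(\Gamma) \to H^1_{\mathrm{loc}}$, together with the weak definition of the conormal traces; the exponential decay of $\Phi$ for $\Re(s) > 0$ is what keeps all of these operators and the energy identity well-behaved at infinity.
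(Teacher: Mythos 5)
Your proof is correct, but it takes a genuinely different route from the paper's. The paper argues directly and constructively: it applies Green's second identity on large balls $B_R(0)$, which produces the three potential terms plus two extra integrals over $\partial B_R(0)$, and then uses the exponential decay of $\Phi(x,\cdot\,;s)$ for $\Re(s)>0$, together with $u\in H^1(\R^d\setminus\Gamma)$, to show those extra terms vanish as $R\to\infty$. You instead run a uniqueness argument: you take the right-hand side $w$ as given, verify $(\laplace - s^2)w=(\laplace-s^2)u$ using the mapping property of $N(s)$ and the fact that the layer potentials solve the homogeneous equation, match the Cauchy-data jumps via the relations \eqref{bem_operator_properties} (your computations $\traceJump{w}=\traceJump{u}$, $\normalJump{w}=\normalJump{u}$ are consistent with the sign conventions there, including $\traceJump{D(s)\phi}=-\phi$), conclude that $v=u-w\in H^1(\R^d)$ solves the homogeneous equation distributionally on all of $\R^d$, and kill $v$ with the energy identity, where $\Re(s)>0$ guarantees $s^2\notin(-\infty,0]$ so that either the imaginary part ($\Im(s^2)=2\Re(s)\Im(s)\neq 0$) or, for real $s>0$, the real part forces $v=0$. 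What each approach buys: the paper's derivation is self-contained modulo Green's identity and the decay of the kernel, and it directly explains where the formula comes from; yours offloads the singular-integral work onto the standard jump relations and global mapping properties of $N(s)$, $S(s)$, $D(s)$ (which for $\Re(s)>0$ do hold globally, not just locally, again by exponential decay — a point you need and correctly flag), converting the derivation into a clean resolvent-uniqueness statement. Your closing density argument for the low-regularity setting is the standard way to make the jump relations rigorous and is acceptable, though note that the burden you defer there (smooth case plus continuity of the potentials) is essentially of the same nature as the boundary-term analysis the paper performs directly.
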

\begin{proof} 
 Formula (\ref{eq:general-representation-formula}) is shown as follows: 
For large balls $B_R(0) \subset \R^d$, (\ref{eq:general-representation-formula}) is obtained by integration by parts with the additional term 
$\int_{\partial B_R(0)} \Phi(x,y; s) \partial_{n(y)} u(y)d\Gamma(y) -\int_{\partial B_R(0)} \partial_{n(y)} \Phi(x,y; s) u(y) d\Gamma(y)$. 
The assumption $\Re(s) >0$ implies that, for fixed $x$, the function  $\Phi(x,\cdot;s)$ (and its derivatives) exponentially decays
as $|y| \rightarrow \infty$. The assumption $u \in H^1(\R^d \setminus\Gamma)$ then allows one to show that the additional term vanishes 
in the limit $R \rightarrow \infty$. 
\end{proof}
\begin{lemma}[Representation Formula, matrix version]
  Let $B$ be a matrix with $\sigma(B) \subseteq \C_+:=\{ z \in \C: \Re{z} > 0 \}$, 
and let $Y\in \prodSpace{H^1}(\R^d\setminus \Gamma)$ be a solution to the differential equation
  \begin{align}
    \label{frml_helmholtz_matrix_version}
    -\prodOp{\laplace} \vect{Y} + B^2 \vect{Y} &= 0, \quad \quad \text{in } \R^d \setminus \Gamma.
  \end{align}
  
  Then we can write $\vect{Y}$ as
  \begin{align*}
    \vect{Y} &= S(B) \normalJump{\vect{Y}} - D(B) \traceJump{\vect{Y}}.
  \end{align*}
  \begin{proof}
    We start with the right-hand side. Inserting the definitions, we get for the $j$-th unit vector $e_j$
    and an integration path $\mathcal{C} \subset \C_+$ encircling $\sigma(B)$:
    \begin{align}
      e_j^T \left(S(B) \normalJump{\vect{Y}} - D(B) \traceJump{\vect{Y}} \right)  \nonumber
      &= \frac{1}{2\pi \ii} e_j^T \int_{\mathcal{C}}{  (B - \lambda)^{-1} \otimes S(\lambda) \normalJump{\vect{Y}} 
        -  (B - \lambda)^{-1} \otimes D(\lambda) \traceJump{\vect{Y}} \,d\lambda} \nonumber\\
      &= \frac{1}{2\pi \ii}  \int_{\mathcal{C}}{ S(\lambda) e_j^T(B - \lambda)^{-1} \normalJump{\vect{Y}} - D(\lambda) e_j^T(B - \lambda)^{-1} \traceJump{\vect{Y}} \,d\lambda}.
      \label{int_rep_formula_proof1}
    \end{align}
    If we apply the scalar representation formula for the function $e_j^T(B - \lambda)^{-1} \vect{Y}$ we get:
    \begin{align}
      (\ref{int_rep_formula_proof1})
       &= \frac{1}{2\pi \ii}  \int_{\mathcal{C}}{ e_j^T(B - \lambda)^{-1} \vect{Y} 
         + N(\lambda) \left(\laplace - \lambda^2 \right)  \left(e_j^T(B - \lambda)^{-1} \vect{Y} \right) \, d\lambda}  \nonumber\\
       &=  e_j^T \vect{Y} +  \frac{1}{2\pi \ii}\int_{\mathcal{C}}{  N(\lambda) \left(\laplace - \lambda^2 \right)  \left( e_j^T(B - \lambda)^{-1} \vect{Y}\right) \, d\lambda}. 
       \label{int_rep_formula_proof2}
     \end{align}
     Thus it remains to show that the last term vanishes.
     For $\lambda \in \C\setminus \sigma(B)$ we calculate
     \begin{align*}
       \left( \prodOp{\laplace} -\lambda^2 \right) \left(B-\lambda \right)^{-1} \vect{Y} 
       &=\left(B-\lambda \right)^{-1} \left( \prodOp{\laplace} \vect{Y} - B^2 \vect{Y} \right) 
       +\left(B-\lambda \right)^{-1} \left(B^2 - \lambda^2\right) \vect{Y} \\
       &= 0  
       +\left(B-\lambda \right)^{-1} \left(B - \lambda\right)\left(B + \lambda \right) \vect{Y} \\
       &= \left(B + \lambda\right) \vect{Y}.
     \end{align*}
     The integral in (\ref{int_rep_formula_proof2}) becomes
     \begin{align*}
       \int_{\mathcal{C}}{ N(\lambda) e_j^T \left(B + \lambda\right) \vect{Y} \, d\lambda }.
     \end{align*}
     Since the integrand is holomorphic on $\C^+$ and $\mathcal{C}$ is a closed path this integral vanishes.
  \end{proof}
\end{lemma}

In this paper we often need to solve systems of equations of special structure arising from the Runge-Kutta method. The following lemma
gives a condition for unique solvability and some stability estimates that are used throughout the paper.
\begin{lemma}
  \label{lemma_solve_matrix_eqn}
  Let $B \in \C^{m \times m}$. Let $V$, $H$ be Hilbert spaces with continuous embedding
  $V \subseteq H$. 
  Let $a(\cdot,\cdot) : V \times V \mapsto \C$ be a continuous sesquilinear form.
  Assume the variational problem of finding $u \in V$ such that
  \begin{align*}
    a(u,v) + \innerprod{H}{\lambda u}{v} &= \dualprod{V}{f}{v} & \forall v \in V
  \end{align*}
  has a unique solution for all $\lambda \in \sigma(B)$ and for all right-hand sides $f \in V'$. Then the following
is true: 
\begin{enumerate}[(i)]
\item 
  \label{item:lemma_solve_matrix_eqn-i}
  There exists a unique solution $\vect{u} \in \prodSpace{H}$ to the vector valued problem
  \begin{align}
\label{eq:lemma_solve_matrix_eqn-10}
    \underline{a}\left(\vect{u},\vect{v} \right) + \innerprod{\prodSpace{H}}{B \vect{u}}{\vect{v}} &= \dualprod{\prodSpace{V}}{f}{v}  & \forall v \in \prodSpace{V},
  \end{align}
  where $\underline{a}(\cdot,\cdot)$ denotes the sum sesquilinear form
  \begin{align*}
    \underline{a}\left(\vect{u},\vect{v} \right):=\sum_{j=1}^{m}{a(\vect{u}_j,\vect{v}_j)}.
  \end{align*}
\item 
  \label{item:lemma_solve_matrix_eqn-ii}
  Assume $0 \not\in \Im(\sigma(B))$.
Let $f \in H'$. Then the solution can be estimated in the $H$ norm by
  \begin{align}
    \label{eq:solve_matrix_eq_H}
    \norm{\vect{u}}_{\prodSpace{H}} &\leq C \norm{\vect{f}}_{\prodSpace{H'}}, 
  \end{align}
  where $C > 0$ depends on $B$ but is independent of $a(\cdot,\cdot)$.
\item  
  \label{item:lemma_solve_matrix_eqn-iii}
  Let $a(\cdot,\cdot)$ be Hermitian and positive semidefinite (i.e., $a(u,u)$ induces a seminorm on $V$). 
  Assume $0 \not \in \Im(\sigma(B))$. 
  Consider the family of sequilinear forms given by $a_{\varepsilon}(\cdot,\cdot):=\varepsilon\; a(\cdot,\cdot)$ 
  for a small parameter $\varepsilon >0$, and let $\vect{u_{\varepsilon}}$ be the solution when $a$ is replaced
  with $a_{\varepsilon}$ in (\ref{eq:lemma_solve_matrix_eqn-10}). 
  Then there exists a constant $C>0$ depending on $B$ but independent of $\varepsilon$ such that for all 
  right-hand sides $f \in H'$ the following estimate holds:
  \begin{align}
    \label{eq:solve_matrix_eq_eps_dependent}
    \varepsilon \underline{a}\left(\vect{u_{\varepsilon}},\vect{u_{\varepsilon}}\right) + \norm{\vect{u_{\varepsilon}}}^2_{\prodSpace{H}} 
    \leq C \norm{\vect{f}}^2_{\prodSpace{H'}}.
  \end{align}
\item 
  \label{item:lemma_solve_matrix_eqn-iv}
  If we identify the functional $f \in H'$ in (\ref{item:lemma_solve_matrix_eqn-iii}) with its Riesz representation, 
   i.e., $\innerprod{\prodSpace{H}}{f}{v}=f(v) \; \forall v \in \prodSpace{H}$
  and make the regularity assumption that $f \in V$, then we can further estimate:
  \begin{align}
    \label{eq:solve_matrix_eq_no_eps}
    a(\vect{u_\varepsilon},\vect{u_\varepsilon}) + \norm{\vect{u_\varepsilon}}^2_{\prodSpace{H}} \leq C \norm{\vect{f}}^2_{\prodSpace{V}}.
  \end{align}
  Again the constant $C$ depends on $B$ but is independent of $\varepsilon$.
\end{enumerate}
  \begin{proof}
    We transform the matrix $B$ to Jordan form: $B=XJX^{-1}$. Then the problem transforms to
    \begin{align*}
      \underline{a}( X^{-1} \vect{u},X^T\vect{v}) + \innerprod{\prodSpace{H}}{J X^{-1} \vect{u}}{X^T\vect{v}}
      &= \dualprod{\prodSpace{V}}{X^{-1}\vect{f}}{ X^{T}v}  & \forall v \in \prodSpace{V}.
    \end{align*}
    By setting $\tilde{\vect{u}} := X^{-1} \vect{u}$ and $\tilde{\vect{v}}:= X^T \vect{v}$ and $\tilde{\vect{f}}:=X^{-1} \vect{f}$, 
    the problem above has a unique solution if and only if 
    \begin{align}
\label{eq:appendix-100}
      \underline{a}\left( \tilde{\vect{u}},\tilde{\vect{v}}\right) + \innerprod{\prodSpace{H}}{J \tilde{\vect{u}}}{\tilde{\vect{v}}}
      &= \dualprod{\prodSpace{V}}{\tilde{\vect{f}}}{ \tilde{\vect{v}}}  & \forall \tilde{\vect{v}} \in \prodSpace{V},
    \end{align}   
    has a unique solution. 
    To simplify the notation we only consider the case that $J$ only consists of a single Jordan block. The proof of the general case works along the same lines.
    Selecting test functions $\tilde{v}=(0,\dots,v_j,\dots 0)$ for all $j=1,\dots,m$ with $v_j \in V$ shows that 
    equation (\ref{eq:appendix-100}) is equivalent to the system of scalar problems
    \begin{align}
\label{eq:appendix-200}
      a\left( \tilde{\vect{u}}_j,\vect{v}_j \right) + \innerprod{H}{\lambda \tilde{\vect{u}}_j + \tilde{\vect{u}}_{j+1}}{v_j}
      &= \dualprod{V}{\tilde{\vect{f}}_j}{ v_j}  & \forall v_j \in V,\;  j=1, \dots, m-1,
    \end{align}  
    where $\lambda$ is the eigenvalue of the Jordan block. For the case $j=m$ a similar equation holds: 
    \begin{align}
\label{eq:appendix-300}
      a\left(\tilde{\vect{u}}_m,v_m \right) + \innerprod{H}{\lambda \tilde{\vect{u}}_m}{v_m}
      &= \dualprod{V}{\tilde{\vect{f}}_m}{ v_m}  & \forall v_m \in V.
    \end{align}  

    By our assumption this last problem has a solution $\tilde{u}_m \in V$. This enables us to solve the 
    $(m-1)$-st equation and by induction we get the solution $\vect{\tilde{u}}$.
    Since each solution of the scalar problems is unique this also makes the vector valued solution unique.
    Hence, (\ref{item:lemma_solve_matrix_eqn-i}) is shown. 
    
    To get the estimates \eqref{eq:solve_matrix_eq_H} and \eqref{eq:solve_matrix_eq_eps_dependent} 
    of (\ref{item:lemma_solve_matrix_eqn-ii}) and (\ref{item:lemma_solve_matrix_eqn-iii}), 
    we set $v_j:=\tilde{\vect{u}}_j$ and recall the definition $a_\varepsilon=\varepsilon a$. 
    This gives for (\ref{eq:appendix-300}): 
    \begin{align*}
      \varepsilon a\left(\tilde{\vect{u}}_m,\tilde{\vect{u}}_m\right) + \innerprod{H}{ \lambda \tilde{\vect{u}}_m}{\tilde{\vect{u}}_m}
      &= \dualprod{V}{\tilde{\vect{f}}_m}{ \tilde{\vect{u}}_m}.
    \end{align*}
    Separating real and imaginary parts gives
    \begin{align*}
      \varepsilon a \left(\tilde{\vect{u}}_m,\tilde{\vect{u}}_m \right)  + \Re(\lambda) \innerprod{H}{ \tilde{\vect{u}}_m}{\tilde{\vect{u}}_m}
      &= \Re\dualprod{V}{\tilde{\vect{f}}_m}{ \tilde{\vect{u}}_m}, \\
      \Im(\lambda) \innerprod{H}{ \tilde{\vect{u}}_m}{\tilde{\vect{u}}_m}
      &= \Im\dualprod{V}{\tilde{\vect{f}}_m}{ \tilde{\vect{u}}_m} .
    \end{align*}
    Since by assumption $\Im(\lambda) \ne 0$ and $f \in H'$ we easily get from these two equations the estimates 
    \eqref{eq:solve_matrix_eq_H} and \eqref{eq:solve_matrix_eq_eps_dependent} for $\tilde{\vect{u}}_m$.
    By doing similar calculations for (\ref{eq:appendix-200}) for $j = m-1,\ldots,1$, we get the desired estimates 
    by induction.
   
    We turn to the proof of (\ref{item:lemma_solve_matrix_eqn-iv}). 
    In order to refine our estimates for the smooth case $f \in V$, i.e., to show \eqref{eq:solve_matrix_eq_no_eps},
    we proceed similarly. By the previous result we only need to show that we can bound $a(u,u)$.
    We choose $v_j:=\lambda \tilde{\vect{u}}_j-\tilde{\vect{f}}_j$ in (\ref{eq:appendix-300}) and get for 
    the $m$-th component:
    \begin{align*}
      \varepsilon a \left(\tilde{\vect{u}}_m, \lambda \tilde{\vect{u}}_m - \tilde{\vect{f}}_m \right)
      + \innerprod{H}{ \lambda \tilde{\vect{u}}_m}{\lambda \tilde{\vect{u}}_m - \tilde{\vect{f}}_m}
      &= \dualprod{V}{\tilde{\vect{f}}_m}{ \lambda \tilde{\vect{u}}_m - \tilde{\vect{f}}_m}. 
      \end{align*}
      Rearranging terms and taking the imaginary part gives in view 
      of $\dualprod{V}{\tilde{f}_m}{v} = \innerprod{H}{\tilde{f}_m}{v}$ for all $v \in V$ and 
      $\innerprod{H}{v}{v} \in \R$:
      \begin{align*}
        \varepsilon\;\Im \left(  a\left(\tilde{\vect{u}}_m,\lambda \tilde{\vect{u}}_m - \tilde{\vect{f}}_m \right) \right)
        &= \Im\innerprod{H}{\tilde{\vect{f}}_m - \lambda \tilde{\vect{u}}_m }{ \lambda \tilde{\vect{u}}_m - \tilde{\vect{f}}_m} = 0.
      \end{align*}
      Hence, ${\varepsilon \Im(\lambda) a\left( \tilde{\vect{u}}_m,\tilde{\vect{u}}_m \right)= 
        \varepsilon \Im \left( a \left(  \tilde{\vect{u}}_m,\tilde{\vect{f}}_m \right) \right)}$, 
      or, using the Cauchy-Schwarz inequality for $a$:
      \begin{align*}        
        a\left( \tilde{\vect{u}}_m,\tilde{\vect{u}}_m \right) 
        &\lesssim a\left( \tilde{\vect{u}}_m,\tilde{\vect{u}}_m \right)^{1/2} a\left( \tilde{\vect{f}}_m , \tilde{\vect{f}}_m \right)^{1/2} 
        \lesssim a\left( \tilde{\vect{u}}_m,\tilde{\vect{u}}_m\right)^{1/2}  \norm{\tilde{\vect{f}}_m}_{V}.
      \end{align*}
      Induction then again gives the analogous statement for the $\tilde{\vect{u}}_j$, $j=1,\ldots,m-1$. 
      
      In order to transform back, we use the fact that $a$ induces a seminorm on $V$. Since $\vect{u}=X \vect{\tilde{u}}$ we can estimate
      \begin{align*}
        \underline{a}(\vect{u},\vect{u})^{1/2} &= \underline{a}(X\vect{\tilde{u}},X\vect{\tilde{u}})^{1/2}
        \leq \norm{X} \underline{a}\left(\vect{\tilde{u}},\vect{\tilde{u}}\right)^{1/2}
      \end{align*}
      and similarly for the $H$-norm. All the estimates then transfer to the original $u$ by taking linear combinations.
  \end{proof}
\end{lemma}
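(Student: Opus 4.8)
The plan is to reduce the vector-valued problem to a triangular cascade of the scalar problems that we have assumed to be solvable, by using the Jordan decomposition of $B$. Writing $B = X J X^{-1}$ and substituting $\tilde{\vect{u}} := X^{-1}\vect{u}$, $\tilde{\vect{v}} := X^T \vect{v}$, $\tilde{\vect{f}} := X^{-1}\vect{f}$, the problem (\ref{eq:lemma_solve_matrix_eqn-10}) is equivalent to the same problem with $B$ replaced by $J$. Since $J$ is block upper triangular, testing with functions supported in a single component decouples the system: for a single Jordan block with eigenvalue $\lambda \in \sigma(B)$, the bottom equation ($j=m$) is a pure scalar problem of the assumed form, while the $j$-th equation couples only to the already-determined $(j+1)$-st component through the lower-order source $\innerprod{H}{\tilde{u}_{j+1}}{v_j}$. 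As each scalar problem is uniquely solvable by hypothesis (because $\lambda \in \sigma(B)$), a backward induction from $j=m$ to $j=1$ produces a unique $\tilde{\vect{u}}$, and transforming back by the fixed invertible $X$ yields existence and uniqueness of $\vect{u}$. This proves (\ref{item:lemma_solve_matrix_eqn-i}).

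For the $H$-bound (\ref{item:lemma_solve_matrix_eqn-ii}) and the $\varepsilon$-uniform estimate (\ref{item:lemma_solve_matrix_eqn-iii}), I would test the $j$-th scalar equation with $v_j = \tilde{u}_j$ and separate real and imaginary parts. The imaginary part isolates $\Im(\lambda)\innerprod{H}{\tilde{u}_j}{\tilde{u}_j}$; since $0 \notin \Im(\sigma(B))$ forces $\Im(\lambda)\neq 0$ and $f\in H'$, this controls $\norm{\tilde{u}_j}_H$ by $\norm{\tilde{f}_j}_{H'}$, with the coupling contribution of $\tilde{u}_{j+1}$ absorbed through the induction. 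In the scaled case $a_\varepsilon = \varepsilon a$, the real part additionally carries the term $\varepsilon a(\tilde{u}_j,\tilde{u}_j)$, which together with the just-obtained $H$-bound gives (\ref{item:lemma_solve_matrix_eqn-iii}). Propagating these estimates back through the induction and transforming with $X$, $X^{-1}$ (whose norms depend only on $B$) completes both parts; all constants are independent of $a$ and of $\varepsilon$.

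The subtlest point is (\ref{item:lemma_solve_matrix_eqn-iv}), where the bound on $a(\vect{u_\varepsilon},\vect{u_\varepsilon})$ must be uniform in $\varepsilon$ even though the $\varepsilon a$ term degenerates. Here the decisive idea is to test the $j$-th equation with $v_j = \lambda \tilde{u}_j - \tilde{f}_j$, which is admissible now that $f \in V$. With this choice the $H$-inner-product term collapses to $-\norm{\lambda\tilde{u}_j - \tilde{f}_j}^2_H$, a real quantity, so taking imaginary parts annihilates it and leaves $\varepsilon\Im(\lambda)\, a(\tilde{u}_j,\tilde{u}_j) = \varepsilon\Im\big(a(\tilde{u}_j,\tilde{f}_j)\big)$, using that $a$ is Hermitian so $a(\tilde{u}_j,\tilde{u}_j)\in\R$. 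Cancelling $\varepsilon$ and applying the Cauchy--Schwarz inequality for the semi-inner product $a$ gives $a(\tilde{u}_j,\tilde{u}_j)^{1/2} \lesssim \norm{\tilde{f}_j}_V$, with constant depending only on $\Im(\lambda)$, hence on $B$. Transforming back and using that $a$ induces a seminorm, so that $\underline{a}(X\tilde{\vect{u}},X\tilde{\vect{u}})^{1/2} \leq \norm{X}\,\underline{a}(\tilde{\vect{u}},\tilde{\vect{u}})^{1/2}$, yields (\ref{item:lemma_solve_matrix_eqn-iv}). I expect the main obstacle to be the careful bookkeeping of the coupling terms $\tilde{u}_{j+1}$ throughout the Jordan-block induction while keeping every constant independent of $\varepsilon$; the elimination of the $H$-term via the test function $\lambda\tilde{u}_j - \tilde{f}_j$ is precisely what makes the $\varepsilon$-independent estimate achievable.
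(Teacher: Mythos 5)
Your proposal is correct and follows essentially the same route as the paper's own proof: Jordan decomposition with the substitutions $\tilde{\vect{u}}=X^{-1}\vect{u}$, $\tilde{\vect{v}}=X^T\vect{v}$, backward induction through the triangular cascade of scalar problems for existence and uniqueness, testing with $\tilde{u}_j$ and splitting into real and imaginary parts for the $H$-bounds, and testing with $\lambda\tilde{u}_j-\tilde{f}_j$ so that the imaginary part annihilates the $H$-term, allowing the factor $\varepsilon$ to cancel before Cauchy--Schwarz gives the $\varepsilon$-uniform bound on $a(\tilde{u}_j,\tilde{u}_j)$. No gaps; this matches the paper step for step, including the final transformation back via $\norm{X}$ using the seminorm property of $a$.
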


\clearpage
\textbf{Acknowledgments:} Financial support by the Austrian Science Fund (FWF) through the 
doctoral school ``Dissipation and Dispersion in Nonlinear PDEs'' (project W1245, A.R.).

\bibliographystyle{plain}
\bibliography{schroedinger}
\end{document}